\documentclass[preprint,sort&compress,12pt,draft]{elsarticle}
\usepackage{amsmath}
\usepackage{mathrsfs}
\usepackage{stmaryrd}
\usepackage{bm}
\usepackage{amsfonts}
\usepackage{amssymb}
\usepackage{amsthm}
\usepackage[RGB]{xcolor}

\newcommand{\be}{\begin{equation}}
\newcommand{\bea}{\begin{equation}\begin{aligned}}
\newcommand{\beas}{\begin{equation*}\begin{aligned}}
\newcommand{\eeas}{\end{aligned}\end{equation*}}
\newcommand{\eea}{\end{aligned}\end{equation}}
\newcommand{\ee}{\end{equation}}

\makeatletter
\def\ps@pprintTitle{%
	\let\@evenfoot\@oddfoot
}
\makeatother
\begin{document}
\begin{frontmatter}
\title{Global well-posedness for 3D incompressible magneto-micropolar fluids without resistivity
and spin viscosity in strip domains}

\author[sJ]{Youyi Zhao}
\ead{zhaoyouyi957@163.com}
\address[sJ]{School of Mathematics and Statistics, Fuzhou University, Fuzhou, 350108, China.}
\begin{abstract}
The global existence of classical solutions to the 3D compressible magneto-micropolar fluid system without resistivity and spin viscosity in a strip domain was recently established by Feng, Hong, and Zhu [Sci. China Math., 2024].
While Lin and Xiang [Sci. China Math., 2020] established global well-posedness for the 2D incompressible counterpart,
the global well-posedness for the 3D incompressible case remains open.
The analysis is rendered difficult by three major obstacles which are further compounded in the 3D case:
the degeneracy induced by the lack of magnetic diffusion and spin viscosity; the coupling between micro-rotation and velocity fields characterized by a non-dissipative anti-symmetric structure; and the interaction between the velocity, magnetic field, and pressure, where the pressure acts as a non-state variable.
In this paper, by adapting the two-layer energy method of Guo and Tice [Arch. Ration. Mech. Anal., 2013] and the techniques employed in Feng et al., together with refined trace estimates, we overcome these difficulties and establish the global well-posedness of classical solutions to the 3D incompressible magneto-micropolar fluid system without resistivity and spin viscosity in a strip domain. Moreover, we demonstrate the algebraic time-decay of solutions toward the equilibrium.
\end{abstract}

\begin{keyword}
Magneto-micropolar fluids; Incompressible; 3D; Global well-posedness; Time decay.
\end{keyword}
\end{frontmatter}


\newtheorem{thm}{Theorem}[section]
\newtheorem{lem}{Lemma}[section]
\newtheorem{pro}{Proposition}[section]
\newtheorem{cor}{Corollary}[section]
\newproof{pf}{\emph{Proof}}
\newdefinition{rem}{Remark}[section]
\newtheorem{definition}{Definition}[section]

\section{Introduction}\label{sec:01}
\numberwithin{equation}{section}

\subsection{Background and motivation}\label{0608241825n}
The micropolar fluid model, first proposed by Eringen \cite{Eringen1966}, has been extensively studied both in the engineering and mathematical literature.
This model describes a class of fluids exhibiting microscopic effects arising from the local structure and micromotions of the fluid particles.
Physically, micropolar fluids may represent media
consisting of the motion of rigid, randomly oriented or spherical particles that have their own spins or microrotations suspended in a viscous medium.
Typical examples include liquid crystals, ferrofluids, colloidal suspensions, and biological fluids such as blood.
Mathematically, this model introduces a micro-rotation field alongside the standard velocity and pressure, and is accordingly governed by an additional conservation law for angular momentum \cite{Lukaszewicz1999}.
This results in a system with microstructure and non-symmetric stress tensors, fundamentally distinguishing it from standard viscous theories.
More background information on the micropolar fluid model can be found in \cite{Cowin1968,Erdogan1970,Eringen1969} and the references therein.

When considering flows such as electrically conducting fluids in a magnetic field or polarized fluids in an electric field, electromagnetic effects must be included. By coupling the conservation laws for micropolar fluids with the equations of electrodynamics, one comes to the magnetohydrodynamics of micropolar fluids \cite{ahmadi1974universal}.
The magneto-micropolar fluid system is typically employed to describe
the motion of aggregates of small solid ferromagnetic particles in viscous magnetic fluids (such as ferrofluids, where magnetic nanoparticles are suspended in base liquids like esters or fluorocarbons), which is of great importance in both practical and mathematical applications \cite{Ball1996}.
Mathematically, the governing system for the incompressible magneto-micropolar fluids is given as follows:
\begin{equation}\label{010100a}
\begin{cases}
\tilde{u}_t + \tilde{u} \cdot \nabla \tilde{u} + \nabla (\tilde{p} +\lambda|\tilde{B}|^2/2)-(\mu + \chi) \Delta \tilde{u} =\lambda \tilde{B}\cdot\nabla \tilde{B}
+2\chi \nabla \times \tilde{w}, \\[0.5mm]
\tilde{B}_t + \tilde{u} \cdot \nabla \tilde{B}=\nu\Delta \tilde{B}+ \tilde{B}\cdot \nabla \tilde{u}, \\[0.5mm]
\tilde{w}_t + \tilde{u} \cdot \nabla \tilde{w} + 4\chi \tilde{w} = 2\chi \nabla \times \tilde{u} {+ \gamma \Delta \tilde{w}  + \kappa \nabla \mathrm{div} \tilde{w}}, \\[0.5mm]
\mathrm{div}\tilde{u}=\mathrm{div}\tilde{B}=0.
\end{cases}
\end{equation}
Here the unknowns $\tilde{u}:= (\tilde{u}_1,\tilde{u}_2,\tilde{u}_3)^\top(x,t)$, $\tilde{B}:= (\tilde{B}_1,\tilde{B}_2,\tilde{B}_3)^\top(x,t)$, $\tilde{w}:= (\tilde{w}_1,\tilde{w}_2, \tilde{w}_3)^\top(x,t)$ and $\tilde{p}:=\tilde{p}(x,t)$ represent the velocity, magnetic field, micro-rotation velocity, and pressure, respectively.
The constants $\mu$, $\chi$, $\lambda$ and $\nu$ stand for the kinematic viscosity, micro-rotation viscosity, permeability of vacuum dividing by $4\pi$, and magnetic
diffusion coefficient, respectively. Additionally, the constants $\gamma$ and $\kappa$ represent the spin viscosities.
In the governing system \eqref{010100a},
we refer to \eqref{010100a}$_1$ as the momentum equations,  \eqref{010100a}$_2$ as the magnetic induction equations, and \eqref{010100a}$_3$ as the micro-rotation equations.
Since the fluid is incompressible and the magnetic field $B$ is source free, we naturally pose the condition \eqref{010100a}$_4$.

It should be noted that the micro-rotation viscosity coefficient $\chi>0$ in \eqref{010100a} is crucial for the micropolar fluid model; otherwise, the velocity and micro-rotation decouple, and the global motion remains unaffected by the micro-rotation.
Due to their relevance in both mathematical and physical contexts, the mathematical theory of magneto-micropolar fluids has been extensively studied.
For the system \eqref{010100a} with full viscosities, the existence, uniqueness, and regularity of weak solutions were established via the standard energy method \cite{RojasMedar1998}. The local existence of strong solutions for arbitrary initial data was proved in \cite{RojasMedar1997}, while the global existence of strong solutions with small initial data was analyzed in \cite{OrtegaTorres1999}.
Beyond the case of full viscosities, significant attention has recently been devoted to the mathematical analysis of system \eqref{010100a} with partial (or mixed) dissipation.
Cheng--Liu \cite{ChengLiu2015} proved the global regularity of the 2D anisotropic magneto-micropolar fluid system with vertical kinematic viscosity, horizontal magnetic diffusion, and horizontal spin viscosity. Exploiting the structure of the system, Yamazaki \cite{Yamazaki2015} established global regularity for solutions in the absence of spin viscosity in 2D. Later, Regmi--Wu \cite{RegmiWu2016} studied the global existence and regularity of classical solutions to the 2D incompressible magneto-micropolar equations with partial dissipation, singling out three special cases of it. For cases involving other mixed partial viscosities, the global existence and regularity were successively established by Ma \cite{Ma2018} and Lin et al. \cite{LinLiuZhangSun2024}, among others. It is worth noting that most of these results are confined to the 2D setting.

In the absence of both magnetic diffusion and spin viscosities (i.e., $\nu=\gamma=\kappa=0$), the governing system reduces to
\begin{equation}\label{0101}
\begin{cases}
\tilde{u}_t + \tilde{u} \cdot \nabla \tilde{u} + \nabla (\tilde{p} +\lambda|\tilde{B}|^2/2)-(\mu + \chi) \Delta \tilde{u} =\lambda \tilde{B}\cdot\nabla \tilde{B}
+2\chi \nabla \times \tilde{w}, \\[0.5mm]
\tilde{B}_t + \tilde{u} \cdot \nabla \tilde{B}= \tilde{B}\cdot \nabla \tilde{u}, \\[0.5mm]
\tilde{w}_t + \tilde{u} \cdot \nabla \tilde{w} + 4\chi \tilde{w} = 2\chi \nabla \times \tilde{u} , \\[0.5mm]
\mathrm{div}\tilde{u}=\mathrm{div}\tilde{B}=0.
\end{cases}
\end{equation}
The analysis of this system is mathematically delicate due to the absence of dissipative mechanisms in the magnetic induction and micro-rotation equations.
Moreover, the system exhibits strong coupling between the micro-rotation and velocity fields characterized by a non-dissipative anti-symmetric structure.

In the absence of the magnetic field (i.e., $\tilde{B}=0$),
system \eqref{0101} reduces to the incompressible micropolar fluid system without spin viscosity. Dong--Zhang \cite{8} proved  the global existence and
uniqueness of smooth solutions in $\mathbb{R}^2$
by introducing a novel auxiliary quantity; however, the corresponding 3D case and the 2D/3D initial-boundary value problems remain open.
It is worth noting that magnetic fields generally possess a stabilizing effect on fluid dynamics. In the context of linearized non-resistive MHD, Chandrasekhar first demonstrated that magnetic fields can inhibit thermal instability in electrically conducting fluids \cite{CSHHSCPO}. Subsequent numerical studies further suggested that energy in an inhomogeneous plasma dissipates at a rate independent of resistivity \cite{CFCCRI}. This implies that the nonlinear non-resistive MHD system may still exhibit dissipative behavior induced by the magnetic field, thereby allowing for global solutions, at least for small initial data. Such stability has been mathematically verified in existing literature; see, for instance, \cite{BCSCSPLL, JFJSJMFMOSERT, RXXWJHXZYZZF, TZWYJGw,WYTIVNMI,JFJSOUI}. Additionally, the magnetic inhibition theory \cite{JFJSOMITIN} provides a physical and mathematical understanding of this stabilization, grounded in the frozen-in flux theorem (magnetic flux conservation) and the principle of minimum potential energy.

For the magneto-micropolar fluid system \eqref{0101} in a 2D strip domain, Lin--Xiang \cite{lin2020global} established the global well-posedness of strong solutions subject to a uniform horizontal magnetic field $\bar{B}:=(1,0)$.
Their analysis relied on the divergence-free condition $\mathrm{div}\tilde{B}=0$ and the Navier-slip boundary condition for the velocity $\tilde{u}$.
However, it is non-trivial to apply such an approach to the 3D case.
Leveraging the stabilizing effect of magnetic fields, Feng--Hong--Zhu \cite{FengSCM} very recently proved the global existence of classical solutions
for the 3D compressible counterpart under a uniform vertical magnetic field, formulated in Lagrangian coordinates
(see also Zhai--Wu--Xu \cite{zhai2025stability} for the study of the periodic domain case under special condition).
The analysis in Feng et al. hinges on a novel div-curl decomposition technique for the micro-rotation velocity
alongside the coupling between the fluid velocity and the micro-rotation field.
However, a crucial ingredient in their approach is the presence of a density-dependent pressure term.
Despite these advances, the global well-posedness of the 3D incompressible magneto-micropolar fluid system (i.e., system \eqref{0101}) in a strip domain has not yet been addressed.
Compared with the work in \cite{FengSCM}, the primary difficulty in the incompressible case lies in the fact that the pressure becomes an unknown function rather than a known function of density. This poses a fundamental obstacle to extending the analysis of \cite{FengSCM} to the incompressible regime, as noted therein.
Furthermore, the presence of physical boundaries introduces additional technical difficulties that complicate the analysis.
Motivated by this, in this paper, we aim to establish the global well-posedness of solutions for such a system in a 3D strip domain. More precisely, we consider system \eqref{0101} in the strip domain $\Omega:=\mathbb{R}^2\times(0,1)$ subject to the following initial-boundary value conditions:
\begin{align}
&\label{0101b}
(\tilde{u},\tilde{B},\tilde{w})\big|_{t=0}=(\tilde{u}^0,\tilde{B}^0,\tilde{w}^0)\quad\mbox{in}\;\Omega,\\[1mm]
&\label{0101c}
\tilde{u}\big|_{\partial\Omega}=0\quad\mbox{for any}\;t>0,
\end{align}
where $\partial\Omega$ denotes the boundary of $\Omega$, namely, $\partial\Omega:=\mathbb{R}^2\times\{0,1\}$.
We mention that the magnetic induction equations and micro-rotation equations are hyperbolic and characteristic,
and hence no boundary condition needs to be imposed for the magnetic field and micro-rotation velocity.

\subsection{Reformulation in Lagrangian coordinates}\label{0608241825}

Similarly to \cite{TZWYJGw,FengSCM,WYTIVNMI,JFJSJMFMOSERT,JFJSOUI},
it is more convenient and effective to work with Lagrangian coordinates.
To this end, let $(\tilde{u},\tilde{B},\tilde{w})$ be the solution of the initial-boundary value problem \eqref{0101}--\eqref{0101c}.
We use the flow map $\zeta$ to define the transformation matrix $\mathcal{A}:=(\mathcal{A}_{ij})_{3\times 3}$
via $\mathcal{A}^{\top}:=(\partial_j \zeta_i)^{-1}_{3\times 3}$,
where $\zeta$ is the solution to the following initial value problem
\begin{equation}
\label{201806122101}
\quad\begin{cases}
\partial_t \zeta(y,t)=\tilde{u}(\zeta(y,t),t)&\mbox{ in }\Omega\times\mathbb{R}^{+},\\[0.5mm]
\zeta(y,0)=\zeta^0(y)&\mbox{ in }\Omega,
\end{cases}
\end{equation}
and the initial flow map $\zeta^0:=\zeta^0(y):\Omega\to \Omega$ is diffeomorphism that satisfies
\begin{align}  \label{zeta0inta}
\partial\Omega=\zeta^0(\partial\Omega)\;\mbox{ and }\;
\det\nabla \zeta^0=1\;\mbox{ for any }y\in \overline{\Omega}.
\end{align}
Here and in what follows, ``$\det$'' denotes the determinant of a matrix.
We denote the Eulerian coordinates by $(x,t)$ with $x=\zeta(y,t)$, whereas $(y,t)\in \Omega\times\mathbb{R}^{+}$ stands for the Lagrangian coordinates.
In order to switch back and forth from Lagrangian to Eulerian coordinates, we assume that
$\zeta(\cdot,t)$ is invertible and $\Omega=\zeta(\Omega, t)$, which can be achieved
when the flow map $\zeta$ is a small perturbation around the identity map $Id$.
In addition, since $\tilde{u}$ is divergence-free,  the flow map $\zeta$ also satisfies the volume-preserving condition (see \cite{MAJBAL}):
\begin{align}\label{202110121654}
\mathrm{det}\nabla\zeta=1
\end{align}
as well as $\det\nabla \zeta^0=1$.

Define the Lagrangian unknowns by
\begin{equation*}
(u, B, w, p)(y,t)=(\tilde{u}, \tilde{B},\tilde{w}, \tilde{p}+ \lambda|\tilde{B}|^2/2)(\zeta(y,t),t),\quad (y,t)\in \Omega\times\mathbb{R}^+.
\end{equation*}
Then the initial-boundary value problem \eqref{0101}--\eqref{0101c} are reformulated as follows:
\begin{equation}\label{202109221247}
\begin{cases}
\zeta_t=u&\mbox{ in } \Omega ,\\[0.5mm]
 u_t +\nabla_{\mathcal{A}}p- (\mu + \chi)\Delta_{\mathcal{A}}u =  \lambda B\cdot \nabla_{\mathcal{A}} B +2\chi\nabla_{\mathcal{A}}\times w
&\mbox{ in } \Omega ,\\[0.5mm]
B_t-B\cdot\nabla_{\mathcal{A}}u=0
&\mbox{ in } \Omega ,\\[0.5mm]
w_t +4\chi w=2\chi\nabla_{\mathcal{A}}\times u&\mbox{ in } \Omega ,\\[0.5mm]
\mathrm{div}_{\mathcal{A}}u=\mathrm{div}_{\mathcal{A}}B=0 &\mbox{ in } \Omega ,\\[1mm]
(\zeta, u, B, w)|_{t=0}=(\zeta^0, u^0, B^0, w^0):=(\zeta^0, \tilde{u}^0(\zeta^0), \tilde{B}^0(\zeta^0),\tilde{w}^0(\zeta^0))\quad &\mbox{ in } \Omega ,\\[0.5mm]
(\zeta, u)=(y,0) & \mbox{ on }\partial\Omega.
\end{cases}
\end{equation}
Here and in what follows we have written the differential operators $\nabla_{\mathcal{A}}$, $\mathrm{div}_\mathcal{A}$ with their actions given by
$(\nabla_{\mathcal{A}}f)_i:=\mathcal{A}_{ij}\partial_jf$, $\mathrm{div}_{\mathcal{A}}X:=\mathcal{A}_{ij}\partial_j X_{i}$
for appropriate scalar function $f$ and vector function $X:=(X_1,X_2,X_3)^{\top}$, and $\Delta_{\mathcal{A}}f:=\mathrm{div}_{\mathcal{A}}
\nabla_{\mathcal{A}}f$.
It should be noted that the Einstein convention of summation over repeated indices has been used here,
and $\partial_{j}$ denotes the partial derivative with respect to the $j$-th component of the variable $y$, i.e., $\partial_{j}:=\partial_{y_j}$.
The differential operator $\nabla_{\mathcal{A}}\times$ with it action given by $(\nabla_{\mathcal{A}}\times X)_{i}:=\epsilon_{ijk}\mathcal{A}_{jl}\partial_{l}X_{k}$,
where $\epsilon_{ijk}$ denotes the Levi-Civita symbol, i.e.,
\begin{align*}
\epsilon_{ijk}=
\begin{cases}
1&\mbox{ if }\;ijk=123,231,321;\\[0.5mm]
-1&\mbox{ if }\;ijk=312,213,132;\\[0.5mm]
0&\mbox{ others}.
\end{cases}
\end{align*}

We turn to study the equivalently initial-boundary value problem \eqref{202109221247} in Lagrangian coordinates,
and  aim to prove the global well-posedness of the initial-boundary value problem \eqref{202109221247}
around the equilibrium state $(u, B, w)=(0, \bar{B},0)$, where $\bar{B}=(\bar{B}_1,\bar{B}_2,\bar{B}_3)^{\top}$ is a non-zero constant vector with $\bar{B}_3\neq0$.
Specially, from  the differential version of magnetic flux conservation \cite{JFJSOMITIN}, we can see that
\begin{align}\label{01061800}
B=\bar{B}\cdot \nabla \zeta
\end{align}
provided the initial data $(\zeta^0,B^0)$ satisfies the frozen condition
\begin{align}&\label{abjlj0i}
B^0= \bar{B}\cdot \nabla \zeta^0.
\end{align}
It is worth noting that $B$ given by \eqref{01061800} automatically satisfies \eqref{202109221247}$_3$ and $\mathrm{div}_{\mathcal{A}}B=0$.
Based on \eqref{01061800}, one can further calculate that
\begin{align}\label{0131}
B\cdot \nabla_{\mathcal{A}} B=(\bar{B}\cdot \nabla)^2\zeta.
\end{align}

From now on, we define
\begin{equation*}  
\eta:=\zeta-y,\mbox{ i.e., }\zeta=\eta + y,
\end{equation*}
then $\eta$ physically represents the displacement function of particle (labelled by $y$).
Under the assumption \eqref{abjlj0i}, the problem \eqref{202109221247} can be then transformed into
\begin{equation}\label{202609221247}
\begin{cases}
\eta_t=u&\mbox{ in } \Omega ,\\[0.5mm]
u_t +\nabla_{\mathcal{A}}p- (\mu + \chi)\Delta_{\mathcal{A}}u =  \lambda(\bar{B}\cdot \nabla)^2\eta +2\chi\nabla_{\mathcal{A}}\times w&\mbox{ in } \Omega ,\\[0.5mm]
w_t +4\chi w=2\chi\nabla_{\mathcal{A}}\times u&\mbox{ in } \Omega ,\\[0.5mm]
\mathrm{div}_{\mathcal{A}}u=0 &\mbox{ in } \Omega ,\\[0.5mm]
(\eta,u,  w)|_{t=0}=(\eta^0, u^0, w^0) &\mbox{ in } \Omega ,\\
(\eta, u)=(0,0) & \mbox{ on }\partial\Omega
\end{cases}
\end{equation}
and
$$B=\bar{B}\cdot \nabla(\eta+y)\;\;\mbox{and}\;\;\mathcal{A}=(\nabla \eta+\mathbb{I})^{-\top},$$
where $\mathbb{I}$ denotes the $3\times 3$ identity matrix.
Finally, we present some useful properties involving $\mathcal{A}$ which we use repeatedly in what follows.
In view of the definition of $\mathcal{A}$ and \eqref{202110121654}, we can deduce the following relation:
\begin{align}
\label{Akl=0}
\partial_j \mathcal{A}_{ij}=0.
\end{align}
In particular, the relation \eqref{Akl=0} yields
\begin{align}
&\label{diverelation2}
\mathrm{div}_{\partial_t^k\mathcal{A}}\partial_t^lu=
\mathrm{div}(\partial_t^k\mathcal{A}^{\top}\partial_t^lu)
\quad\mbox{for}\;k, l\geqslant0.
\end{align}
The rest of this paper is devoted to providing the global well-posedness of solutions for the problem \eqref{202609221247} with small initial data.

\subsection{Perturbed form}\label{202610101708}
Note that the differential operators in \eqref{202609221247} depend on the entries of $\mathcal{A}$ and their derivatives,
we recast the problem \eqref{202609221247} as a perturbation of the linearized system.
This reformulation fixes the coefficients, thereby facilitating both the energy estimates
and the application of elliptic regularity theory in subsequent sections.
Specifically, we shall use the following linear perturbed formulation:
\begin{equation}\label{202609221247n}
\begin{cases}
\eta_t=u&\mbox{ in } \Omega ,\\
u_t +\nabla p- (\mu + \chi)\Delta u =  \lambda (\bar{B}\cdot \nabla)^2\eta +2\chi\nabla\times w+\mathcal{N}^1&\mbox{ in } \Omega ,\\[0.5mm]
w_t +4\chi w=2\chi\nabla\times u+\mathcal{N}^2&\mbox{ in } \Omega ,\\[0.5mm]
\mathrm{div}u=\mathcal{N}^3 &\mbox{ in } \Omega ,\\[0.5mm]
(\eta,u,  w)|_{t=0}=(\eta^0, u^0, w^0) &\mbox{ in } \Omega ,\\
(\eta, u)=(0,0) & \mbox{ on }\partial\Omega,
\end{cases}
\end{equation}
where we have defined that $\tilde{\mathcal{A}}:={\mathcal{A}}-\mathbb{I}$, and
\begin{align*}
&\mathcal{N}^1:=(\mu + \chi)\big(\mathrm{div}_{\mathcal{A}}\nabla_{\tilde{\mathcal{A}}}u^{\top}+\mathrm{div}_{\tilde{\mathcal{A}}}\nabla u^{\top}\big)-\nabla_{\tilde{\mathcal{A}}} p+2\chi\nabla_{\tilde{\mathcal{A}}}\times w,\\[1mm]
&\mathcal{N}^2:=2\chi\nabla_{\tilde{\mathcal{A}}}\times u,\quad \mathcal{N}^3:=-\mathrm{div}_{\tilde{\mathcal{A}}}u,
\end{align*}
and the operator $\nabla_{\tilde{\mathcal{A}}}$ is defined analogously to $\nabla_{\mathcal{A}}$ with $\mathcal{A}$ replaced by $\tilde{\mathcal{A}}$.
The operator $\mathrm{div}_{\tilde{\mathcal{A}}}$ is defined in a similar manner.

\vspace{3mm}
The rest of this paper is organized as follows.
In Section \ref{Main result}, we first introduce some simplified notations throughout this paper, and then introduce our main result, that is Theorem \ref{thm1}.
Section \ref{preliminaries} is devoted to deriving some preliminary estimates that will be used in the proof of Theorem \ref{thm1}.
In Section \ref{energy estimates}, we derive the energy estimates, including the tangential and normal derivatives energy estimates.
Based on these energy estimates, we show the a \emph{priori} estimate by adapting the  multi-layer energy method in Section \ref{2025thm1}, and finally Theorem \ref{thm1} follows.

\section{Main result}\label{Main result}

Before stating our stability result, we shall introduce some simplified notations throughout this paper.

(1) Basic notations: 
$\overline{\Omega}:=\mathbb{R}^2\times[0,1]$;
$\int\cdot\mathrm{d}y:=\int_{\Omega}\cdot\mathrm{d}y$ denotes the integral over domain $\Omega$;
$a\lesssim b$ means that $a\leqslant cb$ for some ``universal'' constant $c>0$, where the constant $c$ may depend on
some physical parameters such as
$\mu$, $\chi$ and $\lambda$, and may be  different from line to line.
Moreover,
$f_{\mathrm{h}}:=(f_1, f_2)$
for $f=(f_{\mathrm{h}}, f_3)^\top$;
$\mathrm{div}_{\mathrm{h}}f_{\mathrm{h}}:=\partial_1f_1+\partial_2f_2$, and
$\mathrm{d}y_{\mathrm{h}}:=\mathrm{d}y_{1}\mathrm{d}y_{2}$.
$\partial_{\mathrm{h}}^{\alpha}:=\partial_1^{\alpha_1}\partial_2^{\alpha_2}$ for some multiindex of order $\alpha:=(\alpha_1, \alpha_2)$ with $|\alpha|=\alpha_1+\alpha_2$;
$\partial_{\mathrm{h}}^{i}$ denotes $\partial_{\mathrm{h}}^{\alpha}$ for any $\alpha$ satisfying $|\alpha|=i$.
Letters $c_{j}$ ($0\leqslant j\leqslant 10$) are fixed constants which may depend on the parameters.

(2) Simplified notations of function spaces:
\begin{equation*}
\begin{aligned}
&L^p:=L^p (\Omega)=W^{0,p}(\Omega),
\;\;W^{i,2}:=W^{i,2}(\Omega),\;\;H^i:=W^{i,2},\\[1mm]
&H_0^1:=\{f\in H^1~|~f|_{\partial\Omega}=0\;\mbox{in the sense of trace}\},\;\;
H_0^j:=H_0^1\cap H^j,
\end{aligned}
\end{equation*}
where $1<p\leqslant\infty$, and $i\geqslant0$, $j\geqslant1$ are integers.

(3) Simplified Sobolev norms and semi-norms:
\begin{align*}
&\|\cdot\|_{i}:=\|\cdot\|_{W^{i,2}},\;\;
\|\cdot\|_{i,k}:=\sum_{|\alpha|=i} \|\partial_{\mathrm{h}}^{\alpha}\cdot\|_{k},\;\;
\|\cdot\|_{\underline{i},k}:=\sum_{j=0}^i\|\cdot\|_{j,k},
\end{align*}
where $i$, $j$ and $k$ are non-negative integers. For convenience, we denote $\sqrt{\sum_{1\leqslant k\leqslant n}\|f_k\|_{\mathcal{X}}^2}$ by $\|(f_1,\ldots,f_n)\|_{\mathcal{X}}$, where $\|\cdot\|_{\mathcal{X}}$ represents a norm or a semi-norm, and each of $f_1,\ldots,f_n$ may be a scalar-, or vector-valued function.

(4) Lower-order and higher-order energy functionals, denoted by $\mathcal{E}_{L}$ and $\mathcal{E}_{H}$ respectively:
\begin{align*}
&\mathcal{E}_{L}:=\|(\nabla\eta,u)\|_{\underline{3},0}^2
+\|\eta\|_{3}^2+\|(u,w)\|_{2}^2+\|\nabla p\|_{0}^2+\|u_t\|_{0}^2,\\
& \mathcal{E}_{H} :=\|(\nabla\eta,u)\|_{\underline{5},0}^2+\|((\bar{B}\cdot\nabla)\eta, \eta)\|_{5}^2
+\sum_{j=0}^{2}\|\partial_t^ju\|_{5-2j}^2+\sum_{j=0}^{1}\|\nabla\partial_t^jp\|_{3-2j}^2\qquad
\\&\quad\quad\quad+\|w\|_{5}^2+\sum_{j=1}^{2}\|\partial_t^jw\|_{6-2j}^2,
\end{align*}
and the corresponding dissipative functionals $\mathcal{D}_{L}$ and $\mathcal{D}_{H}$ are defined by:
\begin{align*}
&\mathcal{D}_{L} :=\|((\bar{B}\cdot\nabla)\eta, \nabla u)\|_{\underline{3},0}^2
+\|(\eta,u)\|_{3}^2+\|w\|_{2}^2+\|\nabla p\|_{1}^2+\|u_t\|_{1}^2,\\
&\mathcal{D}_{H} :=
\|((\bar{B}\cdot\nabla)\eta,\nabla u)\|^2_{\underline{5},0}+\|((\bar{B}\cdot\nabla)\eta, \eta)\|_{5}^2
+\sum_{j=0}^{3}\|\partial_t^ju\|_{6-2j}^2+\sum_{j=0}^{2}\|\nabla\partial_t^j p\|_{4-2j}^2\\&\quad\quad\quad
+\|w\|_{5}^2+\sum_{j=1}^{2}\|\partial_t^jw\|_{7-2j}^2.
\end{align*}

(5) The total energy functional $\mathcal{G}(t):= \sum_{i=1}^4\mathcal{G}_i(t)$ with
\begin{equation*}
\begin{aligned}
 &\mathcal{G}_1(t):=\sup_{0\leqslant \tau< t}\|\eta(\tau)\|_6^2,\quad
 \mathcal{G}_2(t):=\int_0^t\frac{\|\eta(\tau)\|_6^2}{(1+\tau)^{3/2}} \mathrm{d}\tau,\\[1mm]
& \mathcal{G}_3(t):=\sup_{0\leqslant \tau< t}\mathcal{E}_H(\tau)+\int_0^t
\mathcal{D}_H(\tau)\mathrm{d}\tau,\\[1mm]
&\mathcal{G}_4(t):=\sup_{0\leqslant \tau< t}(1+\tau)^{2}\mathcal{E}_L(\tau)+\int_0^t{(1+\tau)^{3/2}}{\mathcal{D}_{L}}(\tau)\mathrm{d}\tau.\qquad\qquad\qquad\qquad\qquad
\end{aligned} \end{equation*}

\vspace{3mm}
Our global well-posedness result for the problem \eqref{202609221247} is given as follows.

\begin{thm}\label{thm1}
Let the initial data $(\eta^0, u^0, w^0)\in H_0^6\times H_0^5\times H^5$.
There exists a sufficiently small constant $\delta >0$ such that, if the initial data $(\eta^0, u^0, w^0)$ satisfies
\begin{itemize}[\quad (1)]
\item[(1)]$\mathrm{det}(\nabla\eta^0+\mathbb{I})=1$\mbox{ and }$\mathrm{div}_{\mathcal{A}^0}u^0=0$\;\;in $\Omega$;
   \item[(2)]  the compatibility conditions $\partial_t^{j} u|_{t=0}=0$ (j=1,\;2) on $\partial\Omega$;
   \item[(3)] $\sqrt{\| \eta^0 \|_6^2+ \|(u^0,w^0)\|_{5}^2}\leqslant\delta $.
\end{itemize}
Then the initial-boundary value problem \eqref{202609221247}
admits a unique global solution $(\eta,u,w)$ on $[0,\infty)$ with an associated  pressure $p$ (up to a constant).
Moreover, the solution enjoys
\begin{align}\label{1.19}
 \mathcal{G}(\infty):= \sum_{i=1}^4\mathcal{G}_i(\infty)
 \lesssim  \| \eta^0 \|_6^2+\|(u^0,w^0)\|_{5}^2.
\end{align}
Here, $\mathcal{A}^0$ denotes the initial data of $\mathcal{A}$, defined by $\eta^0$, while the positive constant $\delta$ depends on other known physical parameters.
\end{thm}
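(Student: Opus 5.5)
The proof follows the Guo--Tice two-layer energy method: local well-posedness, a priori estimates closing in $\mathcal{G}$ under smallness, and a continuity argument. Local existence of a unique solution on some $[0,T)$ with $\mathcal{G}(T)<\infty$ comes from a standard linearization/iteration scheme for \eqref{202609221247}, using the compatibility conditions (2) and Stokes regularity. The core task is then to show that if $\mathcal{G}(T)\leqslant \delta_0$ is small, then in fact $\mathcal{G}(T)\lesssim \|\eta^0\|_6^2+\|(u^0,w^0)\|_5^2$. Global existence and \eqref{1.19} then follow by continuity, since the constant is independent of $T$.

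\textbf{Step 1: tangential energy estimates.} Apply $\partial_{\mathrm h}^\alpha$ ($|\alpha|\leqslant 5$, resp. $\leqslant 3$) to \eqref{202609221247n}. Test the momentum equation with $\partial_{\mathrm h}^\alpha u$ and the micro-rotation equation with $\frac12\partial_{\mathrm h}^\alpha w$. The antisymmetric coupling gives
\[
2\chi\int \nabla\times w\cdot u\,\mathrm{d}y=2\chi\int w\cdot\nabla\times u\,\mathrm{d}y ,
\]
and together with $\chi|\nabla\times u|^2$ and $2\chi|w|^2$ this yields a perfect square $\chi\|\nabla\times u-w\|_0^2$ plus the dissipation $\mu\|\nabla u\|_0^2+\chi\|w\|_0^2$. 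The magnetic term gives $\frac{\lambda}{2}\frac{d}{dt}\|(\bar B\cdot\nabla)\partial_{\mathrm h}^\alpha\eta\|_0^2$. To get dissipation of $(\bar B\cdot\nabla)\eta$, I would also test with $\partial_{\mathrm h}^\alpha\eta$, which is legitimate since $\eta|_{\partial\Omega}=0$. Because $\bar B_3\neq0$, a Poincaré-type inequality $\|\eta\|_0\lesssim\|(\bar B\cdot\nabla)\eta\|_0$ in the strip then gives control of $\eta$. The pressure contributes $\int\partial_{\mathrm h}^\alpha p\,\partial_{\mathrm h}^\alpha\mathcal N^3$. I would rewrite $\mathcal N^3$ via \eqref{diverelation2} as a divergence and move one derivative onto $p$, and handle the highest-order commutator through the time derivative of $\mathrm{div}_{\tilde{\mathcal A}}\eta$-type expressions. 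Time derivatives $\partial_t^j u$ ($j\leqslant2$) are treated similarly using the $\mathcal A$-form of the equations, so that the pressure term vanishes exactly.

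\textbf{Step 2: normal derivatives and elliptic recovery.} First I would solve the stationary Stokes problem
\[
-(\mu+\chi)\Delta u+\nabla p=\lambda(\bar B\cdot\nabla)^2\eta+2\chi\nabla\times w-u_t+\mathcal N^1,\qquad \mathrm{div}u=\mathcal N^3,
\]
to recover $\|u\|_{k+2}+\|\nabla p\|_k$. Next, to gain full regularity of $(\bar B\cdot\nabla)\eta$ and $\eta$ in $H^5$ despite the lack of resistivity, I would use the Feng--Hong--Zhu idea. Combining $\eta_t=u$ with the momentum equation gives a damped equation for the normal derivatives, of the form
\[
\partial_t\big((\mu+\chi)\partial_3^k\Delta\eta\big)+\lambda(\bar B\cdot\nabla)^2\partial_3^k\eta=\ldots,
\]
handled inductively in the number of normal derivatives starting from the tangential bounds. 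For $w$, the equation $w_t+4\chi w=2\chi\nabla\times u+\mathcal N^2$ is an ODE with damping, which gives $\|w\|_5$ and $\|\partial_t^jw\|$ from $\|u\|_6$-type dissipation. I would use the div--curl decomposition of $w$ only where $\nabla\times w$ feeds back at top order.

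\textbf{Step 3: two-tier closure.} The nonlinear terms are bounded by $\sqrt{\mathcal E_H}\mathcal D_H$, except for top-order terms involving $\|\eta\|_6$, which appear in the form $\|\eta\|_6\sqrt{\mathcal D_L\mathcal D_H}$. This gives
\[
\frac{d}{dt}\tilde{\mathcal E}_H+\mathcal D_H\lesssim\|\eta\|_6\sqrt{\mathcal E_L}\sqrt{\mathcal D_H},\qquad \frac{d}{dt}\tilde{\mathcal E}_L+\mathcal D_L\leqslant0,
\]
with $\tilde{\mathcal E}\sim\mathcal E$. Interpolating $\mathcal E_L\lesssim \mathcal D_L^{\theta}\mathcal E_H^{1-\theta}$ in horizontal frequency gives $\mathcal E_L\lesssim(1+t)^{-2}$. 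The bound on $\|\eta\|_6$ comes from integrating $\eta_t=u$ against the Stokes estimate. That term is the reason for the weights $(1+\tau)^{-3/2}$ in $\mathcal G_2$ and $(1+\tau)^{3/2}$ in $\mathcal G_4$, since $\int\|\eta\|_6\sqrt{\mathcal E_L}\sqrt{\mathcal D_H}$ closes by Cauchy--Schwarz using $\mathcal G_2\mathcal G_4$.

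I expect the \textbf{main obstacle} to be the pressure in the tangential and normal estimates. The normal-derivative damping argument of Feng et al. relies on a density-dependent pressure, whereas here $p$ is non-local. I would handle it by taking the curl, or by projecting onto the horizontal components and using $\partial_3 p$ from the third momentum equation together with refined trace estimates of $\partial_3u$ and $p$ on $\partial\Omega$, so that boundary integrals are controlled by $\sqrt{\mathcal E_H}\mathcal D_H$.
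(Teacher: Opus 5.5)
The skeleton of your argument matches the paper: local existence plus continuity, tangential estimates in which the antisymmetric coupling becomes a square, Stokes regularity, and a two-tier closure with $\mathcal{E}_L\lesssim(1+t)^{-2}$ from interpolation. The gap is the step that makes the problem hard, namely closing the normal-derivative estimates; the mechanisms you describe form a loop that does not close.

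\textbf{The loop.} The Stokes estimate bounds $\|u\|_{k+2}+\|\nabla p\|_k$ only through $\|(\bar{B}\cdot\nabla)^2\eta\|_k$ and $\|\nabla\times w\|_k$. Your damped equation for $\partial_3^k\Delta\eta$ has $\partial_3^k\nabla p$ and $2\chi\partial_3^k\nabla\times w$ on the right at the same order. The damped $w$-ODE gives $\|w\|_{k+1}$ only through $\|\nabla\times u\|_{k+1}$. None of these constants is small relative to the damping rates, and because of the boundary you cannot simply differentiate in $y_3$ and integrate by parts.

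\textbf{How the paper breaks it.} It does so in two places.
\begin{enumerate}
\item It writes $\lambda\bar{B}_3^2\partial_3^2\eta=\lambda\bar{B}_3^2(\Delta-\Delta_{\mathrm{h}})\eta$ and solves a Stokes problem for $\varphi=u+c\eta$, with $c=\lambda\bar{B}_3^2/(\mu+\chi)$. The forcing then contains only $\eta$-derivatives with at least one horizontal derivative, plus $u_t$ and $\nabla\times w$. Since $\eta_t=u$, one has $\|\varphi\|^2=c\frac{\mathrm{d}}{\mathrm{d}t}\|\eta\|^2+c^2\|\eta\|^2+\|u\|^2$. So damping of $\eta$ and the pressure bound come out together, recursively in the number of horizontal derivatives (Lemma \ref{lem:11292030es}). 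At top order this gives $\frac{\mathrm{d}}{\mathrm{d}t}\overline{\|\eta\|}_6^2+\|\eta\|_6^2\lesssim\mathcal{E}_H+\mathcal{D}_H$, which is what bounds $\|\eta\|_6$ uniformly. Integrating $\eta_t=u$ cannot do this, since only $\int_0^t\|u\|_6^2$ is controlled.
\item To control $\nabla\times w$ at every level, it takes the curl of the micro-rotation equation. This is an ODE, so no boundary condition is needed. Using $\nabla\times\nabla\times u=\nabla\mathrm{div}u-\Delta u$ gives the coupled system \eqref{2026195637es} for $((\bar{B}\cdot\nabla)^2\eta_{\mathrm{h}},\partial_3^2u_{\mathrm{h}},W_{\mathrm{h}})$, whose linear right-hand sides carry at most one $\partial_3$, apart from $\nabla_{\mathrm{h}}p$. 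Testing with $(\bar{B}\cdot\nabla)^2\partial^\alpha u_{\mathrm{h}}$ and $\bar{B}_3^2\partial^\alpha W_{\mathrm{h}}$ yields, up to terms with fewer vertical derivatives, $\bar{B}_3^2\big(\mu\|\partial_3^2\partial^\alpha u_{\mathrm{h}}\|_0^2+\chi\|\partial_3^2\partial^\alpha u_{\mathrm{h}}+2\partial^\alpha W_{\mathrm{h}}\|_0^2\big)$. This is coercive because $\mu>0$. The third components follow from $\mathrm{div}u=\mathcal{N}^3$ and $\det\nabla\zeta=1$, and $\mathrm{div}w$ from its own damped equation.
\end{enumerate}

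\textbf{The pressure.} You rightly flag the pressure as the main obstacle, but your proposed handling of it fails. Taking the curl of the momentum equation removes $p$ but leaves $\nabla\times u$ without boundary data, so no normal-derivative estimate can be run on it. Moreover, the boundary integrals that actually arise come from linear terms, so they cannot be bounded by $\sqrt{\mathcal{E}_H}\mathcal{D}_H$. After the $\varphi$-Stokes step, the only delicate pressure term is $\int\partial^\alpha\nabla_{\mathrm{h}}p\cdot(\bar{B}\cdot\nabla)^2\partial^\alpha u_{\mathrm{h}}\,\mathrm{d}y$ in the $W_{\mathrm{h}}$-system.
\begin{enumerate}
\item If $\partial^\alpha=\partial_{\mathrm{h}}^{j+k}\partial_3^{i-j-k}$ contains a $\partial_3$, the term is bounded by $\|\nabla\partial_{\mathrm{h}}p\|_{j,i-j-1}\|u_{\mathrm{h}}\|_{j,i-j+2}$. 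The pressure factor carries an extra horizontal derivative and is controlled by the Stokes estimate one level up.
\item If $\partial^\alpha=\partial_{\mathrm{h}}^i$, one integrates one $\bar{B}\cdot\nabla$ by parts. The boundary term is treated with the dual estimate \eqref{11190840} and the refined trace estimate \eqref{37190928}, giving $\|\nabla p\|_i\|\nabla u_{\mathrm{h}}\|_{i+1}^{1/2}\|\nabla u_{\mathrm{h}}\|_{\underline{i+1},0}^{1/2}$.
\end{enumerate}
This closes by Young's inequality only because one half-power is a purely tangential norm, already dissipated in Lemma \ref{lem:21092401}. The factors $\|\nabla p\|_i$ and $\|u\|_{i+2}$ are supplied by the $\varphi$-Stokes estimate.

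\textbf{Smaller points.}
\begin{enumerate}
\item The micro-rotation equation must be tested with weight $1$, not $\tfrac12$. Weight $1$ gives cross terms $-4\chi\int\nabla\times u\cdot w\,\mathrm{d}y$ and the coercive form $\mu\|\nabla u\|_0^2+\chi\|\nabla\times u-2w\|_0^2$. Weight $\tfrac12$ gives $-3\chi\int\nabla\times u\cdot w\,\mathrm{d}y$, which is coercive only if $\chi<8\mu$.
\item In the $\eta$-test, $2\chi\int\nabla\times\partial_{\mathrm{h}}^\alpha w\cdot\partial_{\mathrm{h}}^\alpha\eta\,\mathrm{d}y$ must be rewritten as a time derivative via the $w$-equation. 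At top horizontal order, $\|\nabla\partial_{\mathrm{h}}^\alpha\eta\|_0$ is not part of the dissipation.
\item The top-order nonlinearity is $\sqrt{\mathcal{D}_L}\|\eta\|_6^2$, arising for instance from $\int\partial_{\mathrm{h}}^5\mathcal{N}^1\cdot\partial_{\mathrm{h}}^5\eta\,\mathrm{d}y$, since $\|\partial_{\mathrm{h}}^6\eta\|_0$ is not in $\mathcal{D}_H$. Its coefficient lies in $\mathcal{D}_L$ rather than $\mathcal{E}_L$, and it carries no $\mathcal{D}_H$ factor. This is exactly why the weighted integrability of $\mathcal{D}_L$ in $\mathcal{G}_4$ is needed.
\end{enumerate}
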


\begin{rem}\label{rem:11301552}
Let $(\eta,u,w)$ be constructed in Theorem \ref{thm1}. If $\delta$ is sufficiently small, then, for each fixed $t>0$,
\begin{align}
& {\zeta}:=\eta+y: \overline{\Omega} \to   \overline{\Omega}\; \mbox{ is a homeomorphism mapping},\nonumber  \\
& {\zeta}: \Omega  \to \Omega\; \mbox{ is a } C^4\mbox{-diffeomorphic mapping}\nonumber.
\end{align}
Thus, by an inverse transformation of Lagrangian coordinates,
we can recover the global well-posedness of the original problem \eqref{0101}--\eqref{0101c} from Theorem \ref{thm1}.
\end{rem}

\begin{rem}\label{rem:1}
Based on the derivation of the time-decay estimate for $\mathcal{E}_{L}(t)$, one observes that higher regularity of solutions implies a faster time-decay rate for lower-order derivatives. Thus, we can also establish an almost exponential time-decay estimate as demonstrated in \cite{TZWYJGw}.
\end{rem}

Now we briefly sketch the proof of Theorem \ref{thm1}.
The local well-posedness result of the incompressible viscous MHD system without magnetic diffusion and the related incompressible viscous fluids has been established in several works; see, for instance \cite{WYJTIKCT,GYTILW1,JFJSJMFMOSERT}.
Moreover, note that Equations \eqref{202609221247}$_3$ for $w$ is an ODE, from which one can recover the solution $w$ in terms of $\nabla_{\mathcal{A}}\times u$.
Hence, following the arguments for the existence result presented in \cite{GYTILW1,JFJSJMFMOSERT}, we can use a standard approximation scheme for the linearized problem and an iterative method to establish a local-in-time existence result for a unique solution $(\eta,u,w)$ with an associated pressure $p$ to the problem \eqref{202609221247}.
Therefore, by a continuity argument, to prove Theorems \ref{thm1}, it suffices to derive the a priori estimate \eqref{202604281950}, as shown in Proposition \ref{pro:0807}.

Our strategy for establishing the \emph{a priori} estimate is the natural energy evolution.
First, we consider the basic energy identities, which come from testing the momentum equations by $u$ and $\eta$, respectively:
\begin{align*}
&\frac{1}{2}\frac{\mathrm{d}}{\mathrm{d}t}\big(\|u\|_0^2+\lambda\|(\bar{B}\cdot\nabla)\eta\|_{0}^2\big)
+(\mu+\chi)\|\nabla u\|_0^2-2\chi\int\nabla \times w\cdot u\mathrm{d}y=\mathcal{M}^1,\\[1mm]
&\frac{1}{2}\frac{\mathrm{d}}{\mathrm{d}t}\|\sqrt{(\mu+\chi)}\nabla\eta\|_{0}^2
+\lambda\|(\bar{B}\cdot\nabla)\eta\|_{0}^2+\int\eta\cdot u_{t}\mathrm{d}y
-2\chi\int\nabla \times w\cdot \eta\mathrm{d}y=\mathcal{M}^2
\end{align*}
and from testing the micro-rotation equations by $w$:
\begin{align*}
&\frac{1}{2}\frac{\mathrm{d}}{\mathrm{d}t}\|w\|_0^2+4\chi\|w\|_0^2-2\chi\int\nabla \times u\cdot w\mathrm{d}y=\mathcal{M}^3,\qquad\qquad\qquad
\end{align*}
where $\mathcal{M}^{i}\;(i=1,2,3)$ represents the integral involving the nonlinear terms.
These basic energy identities demonstrate that, for the magneto-micropolar fluid system without magnetic diffusion and spin viscosity,
the dissipation provides no direct control of the energy due to
the weak dissipation from the magnetic tension $\lambda(\bar{B}\cdot\nabla)^2\eta$, and thereby leads to the presence of some delicate integrals that cannot be controlled by dissipation alone.
It is worth noting that similar difficulties also arise in the study of surface wave problems for viscous fluids,
where the dissipation is also weaker than the energy.
To overcome this difficulty, Guo--Tice introduced the celebrated two-layer energy method to consider
separately the lower-order energy with decay-in-time  and the bounded higher-order energy  \cite{GYTIAE2,GYTIDAP}.
Such a method has proven to be effective in studying the viscous and non-resistive MHD system (i.e., $w=\chi=0$) as well;
please refer to a series of works by Jiang--Jiang \cite{JFJSJMFMOSERT,JFJSOUI} and Wang \cite{TZWYJGw,WYTIVNMI} for more results in this research.
However, different from these works in Jiang--Jiang and Wang, the energy evolutions for the magneto-micropolar fluid system introduce additional terms involving vorticities.
These terms are linear and anti-symmetric, posing significant challenges due to the lack of spin viscosity.
Very recently, by adapting the two-layer energy method of Guo--Tice and the novel div-curl decomposition for micro-rotation velocity,
Feng--Hong--Zhu \cite{FengSCM} successfully established the global existence of classical solutions to the 3D compressible magneto-micropolar fluid equations in the absence of magnetic diffusion and spin viscosity.

In the spirit of the method of Guo--Tice and the div-curl decomposition technique used in Feng et al., we introduce below a new variant of the multi-layer energy method to resolve our problem. Specifically, we establish three energy inequalities, namely the lower-order energy inequality
\begin{align}\label{202109291440}
\frac{\mathrm{d}}{\mathrm{d}t}\tilde{\mathcal{E}}_{L}+{\mathcal{D}}_{L}\leqslant 0,\qquad\quad
\end{align}
and the higher-order and the highest-order energy inequalities
\begin{align}
&\label{202110151621}
\frac{\mathrm{d}}{\mathrm{d}t}\tilde{\mathcal{E}}_{H}+{\mathcal{D}}_{H}\lesssim
\sqrt{\mathcal{D}_{L}}\|\eta\|_{6}^2,\\
&\frac{\mathrm{d}}{\mathrm{d}t} \overline{\|\eta\|}_{6}^2+\|\eta\|_{6}^2\lesssim  \mathcal{E}_H+\mathcal{D}_{H},\nonumber
\end{align}
where the functionals $\tilde{\mathcal{E}}_{L}$ and $\tilde{\mathcal{E}}_{H}$ are equivalent to ${\mathcal{E}}_{L}$ and ${\mathcal{E}}_{H}$, respectively
and $\overline{\|\eta\|}_{6}^2$ is equivalent to $\|\eta\|_{6}^2$.
Since we need to work with higher-order energy functionals, and the basic energy estimate does not apply to the higher-order normal spatial derivatives of $(q,u,w)$
in Sobolev norms due to the presence of physical boundaries, our proof requires several steps.
Roughly speaking, the argument is divided into the tangential estimates and normal estimates for $(q, u, w)$, all of which are established under the \emph{a priori} assumption \eqref{aprpioses}.

Observe that tangential derivatives (including temporal and horizontal ones) naturally preserve the boundary condition, which
allows for integration by parts and the application of the Friedrichs inequality \eqref{friedrich} and the Poincar\'e-type inequality \eqref{poincaretype} in the energy evolution.
Hence, the strategy employed in previous works such as \cite{FengSCM,JFJSJMFMOSERT,TZWYJGw} can also be applied to our tangential energy estimates (see Lemmas \ref{lem:21092401}--\ref{badiseqinM}).
Having established the tangential estimates for $(\eta,u,w)$,
we turn to the estimates of the normal derivatives. This constitutes a subtle and significant part of the overall proof.

Since $p$ is an unknown rather than a known function of density, and thus lacks an ODE structure  present in the compressible case,
the only viable approach to derive estimates for $p$ is to employ the elliptic regularity theory of the Stokes problem.
This approach simultaneously yields estimates for $u$ (also indeed for $\eta$ as well); see Lemmas \ref{lem:11292030es}--\ref{lem:0426}.
However, to close these estimates for $(\eta,u,p)$, we require control over the high-order derivatives of $w$ previously.
Following the strategy in \cite{FengSCM} regarding the $\mathrm{div}$-$\mathrm{curl}$ decomposition for $w$,
we now establish the normal estimates for $(\nabla\times w,\mathrm{div}w)$, which simultaneously yields the dissipation estimates for
$(\bar{B}\cdot \nabla)^2\eta$ and $u$.
We first derive the evolution equation for $\nabla\times w$ by applying the curl operator to the micro-rotation equations.
By combining this with the momentum equations, we then obtain two ODE systems that decouple the third component from the horizontal components of the
equations for $(\bar{B}\cdot \nabla)^2\eta$, $\partial_3^2u$ and $\nabla \times w$ (see \eqref{2026195637es}--\eqref{2026195638es}).
Let $W=(W_{\mathrm{h}},W_3)^\top:=\nabla\times w$. To illustrate the argument, we focus on the horizontal components for $((\bar{B}\cdot \nabla)^2\eta, \partial_3^2u, W)$:
\begin{equation}\label{2026195637n260421m}
\begin{cases}
\lambda (\bar{B}\cdot \nabla)^2\eta_{\mathrm{h}}+(\mu + \chi)\partial_{3}^2u_{\mathrm{h}}+2\chi W_{\mathrm{h}} = \tilde{\mathcal{N}}_{\mathrm{h}}\qquad\qquad\qquad  & \mbox{in } \Omega ,\\[1mm]
\partial_{t}W_{\mathrm{h}}+4\chi W_{\mathrm{h}}+2\chi\partial_{3}^2u_{\mathrm{h}} = \tilde{\mathcal{M}}_{\mathrm{h}}  & \mbox{in } \Omega.
\end{cases}
\end{equation}
where $\tilde{\mathcal{N}}_{\mathrm{h}} \sim \partial_tu_{\mathrm{h}} + \nabla_{\mathrm{h}} p + \nabla\partial_{\mathrm{h}} u + \emph{h.o.t}$, $\tilde{\mathcal{M}}_{\mathrm{h}} \sim \nabla\partial_{\mathrm{h}} u + \emph{h.o.t}$, and $\emph{h.o.t}$ denotes the nonlinear terms involving $(\eta,u,w,p)$ (see \eqref{2026195637es}).
The presence of the term  $\mu \partial_{3}^2u_{\mathrm{h}}$ allows us to establish positive dissipation for $(\partial_{3}^2u_{\mathrm{h}},W_{\mathrm{h}})$ from the coupling between $\partial_{3}^2u_{\mathrm{h}}$ and $W_{\mathrm{h}}$; see the derivation of \eqref{2092650411535} for details.
It is worth noting that aside from the pressure term $\nabla_{\mathrm{h}} p$, the linear terms in $\tilde{\mathcal{N}}_{\mathrm{h}}$ and $\tilde{\mathcal{M}}_{\mathrm{h}}$
involve only tangential or first-order vertical derivatives of $u$.
This leads us to anticipate that the higher-order estimates for $((\bar{B}\cdot \nabla)^2\eta_{\mathrm{h}},\partial_{3}^2u_{\mathrm{h}},W_{\mathrm{h}})$ can also be reduced to their tangential estimates.
However, since we cannot apply the Stokes estimate to derive bounds for $\nabla_{\mathrm{h}}p$ due to the lack of control over $W$ established previously, the pressure term $\nabla_{\mathrm{h}} p$ presents a significant challenge:
$$\int\partial_{\mathrm{h}}^{j+k}\partial_{3}^{i-j-k}\nabla_{\mathrm{h}} p
\cdot(\bar{B}\cdot \nabla)^2\partial_{\mathrm{h}}^{j+k}\partial_{3}^{i-j-k}u_{\mathrm{h}}\mathrm{d}y.$$
This differs markedly from the setting in \cite{FengSCM}, where the pressure term would otherwise support the dissipation mechanism in conjunction with the magnetic tension in Lagrangian coordinates.
To circumvent this difficulty, we analyze three cases based on $0\leqslant k\leqslant i-j$.
For the case $i=0$, this integral can be bounded by $\|\nabla p\|_{0}\|\nabla u\|_{2}^{1/2}\|\nabla u\|_{0}^{1/2}$.
For the case $i\neq0$ and $k<i-j$, we observe that this integral can be bounded by $\|\nabla \partial_{\mathrm{h}}p\|_{j,i-j-1}\|u_{\mathrm{h}}\|_{j,2+i-j}$, which implies
we may reduce the normal estimates into horizontal ones, utilizing the bound on $\|\nabla \partial_{\mathrm{h}}p\|_{j,i-j-1}$ and Young's inequality.
The most delicate case occurs when $k=i-j$:
$$\int\partial_{\mathrm{h}}^i\nabla_{\mathrm{h}}p\cdot(\bar{B}\cdot \nabla)^2\partial_{\mathrm{h}}^iu_{\mathrm{h}}\mathrm{d}y.$$
In this case we cannot expect to transform the normal derivatives into horizontal ones.
Fortunately, given the presence of sufficient horizontal derivatives, we address this integral by integrating by parts and applying the refined trace estimate \eqref{37190928}, which yields the bound $\|\nabla p\|_{i}\|\nabla u_{\mathrm{h}}\|_{i+1}^{1/2}\|\nabla u_{\mathrm{h}}\|_{\underline{i+1},1}^{1/2}$ (see \eqref{20260411na}).
This integral and the integral for the case $i=0$ are ultimately controlled via Young's inequality combined with the tangential energy estimate  and the bound for $\|u\|_{i+2}+\|\nabla p\|_{i}$.
Notably,  obtaining the dissipation estimate for $(\bar{B}\cdot \nabla)^2\eta_{\mathrm{h}}$, analogous to that for $u$, is also required.
Indeed, to control these difficult integrals, it is essential to ensure that $\|u\|_{i+2}+\|\nabla p\|_{i}\lesssim\sqrt{\mathcal{D}_{L}}$ for $i=1$ and $\lesssim\sqrt{\mathcal{D}_{H}}$ for $i=4$.
In view of \eqref{112922471039}, this estimate is indispensable.
Although the integral involving $\nabla_{\mathrm{h}} p$ presents a challenge, we can handle it using the same strategy outlined above.
Additionally, to derive the normal estimates for $((\bar{B}\cdot \nabla)^2\eta_3,\partial_3^2u_3)$, we invoke the divergence-free condition \eqref{202609221247}$_4$
and the volume-preserving condition \eqref{202110121654}.
This allows us to transform the normal estimates for $((\bar{B}\cdot \nabla)^2\eta_3,\partial_3^2u_3)$ into those for
$((\bar{B}\cdot \nabla)^2\eta_{\mathrm{h}},\partial_3^2u_{\mathrm{h}})$,
combined with the tangential estimates (see \eqref{2092650411339}--\eqref{209265041136}),
which also streamlines the estimates for $W_3$.
The detailed estimates can be found in Lemma \ref{uwnormal}.

Furthermore, by applying the divergence operator to the  micro-rotation equations, we derive the evolution equation for $\mathrm{div} w$,
This equation exhibits a structure analogous to the dissipative ODE $\partial_tf+cf=g$, modulo error terms.
Hence, the energy-dissipation estimate for $\mathrm{div} w$ follows readily; see Lemma \ref{divwnormal} for further details.
Consequently, by collecting the tangential and normal estimates of $(\eta,u,w,p)$ recursively, we obtain the lower-order and higher-order energy inequalities
\eqref{202109291440}--\eqref{202110151621}.

In the next step,  we aim to identify a suitable integrable time-decay for the lower-order dissipation ${\mathcal{D}}_{L}$ so as to close the higher-order energy inequality. Instead of proving the inequality $C\tilde{\mathcal{E}}_{L}\leqslant{\mathcal{D}}_{L}$ directly, we are fortunate to be able to prove the stronger inequality $C\tilde{\mathcal{E}}_{L}^{3/2}\leqslant{\mathcal{D}}_{L}$ by utilizing interpolation between lower-order derivatives and bounded higher-order derivatives. Based on this result, we then derive a differential inequality
$$\frac{\mathrm{d}}{\mathrm{d}t}\tilde{\mathcal{E}}_{L}+C\tilde{\mathcal{E}}_{L}^{3/2}\leqslant 0,$$
which implies
\begin{align}
\label{202109291442}
\tilde{\mathcal{E}}_{L}(t)\lesssim \mathcal{E}_{L}(0)(1+t)^{-2}.
\end{align}
Combining \eqref{202109291440} with \eqref{202109291442}
leads to an integrable time-decay of the dissipation ${\mathcal{D}}_{L}$
in the following sense:
$$\sup_{0\leqslant \tau< t}(1+\tau)^{3/2}\mathcal{E}_{L}(\tau)+\int_0^t{(1+\tau)^{3/2}}{\mathcal{D}_{L}}(\tau)\mathrm{d}\tau
\lesssim{\mathcal{E}}_{L}(0).$$
Full details will be presented in Section \ref{2025thm1}.
Consequently, the use of the multi-layer energy method allows us to derive the \emph{a priori} estimate \eqref{202604281950}, which in conjunction with the local well-posedness result and a standard continuity method, establishes Theorem  \ref{thm1}.

\begin{rem}
Upon completion of the proof, we observe that $\|u\|_{6}+\|\nabla p\|_{4}\lesssim\sqrt{\mathcal{D}_{H}}$.
This means that the highest-order dissipation estimate for $(u,p)$ is controlled by $\mathcal{D}_{H}$,
ensuring that its time integral is uniformly bounded.
In addition, by identifying an integrable time-decay of the lower-order dissipation $\mathcal{D}_{L}$,
we can replace the decay of the lower-order energy in closing the higher-order energy inequality, thereby relaxing the regularity requirements for the initial data.
Hence, our result, to some extent, improves the findings in \cite{JFJSJMFMOSERT,JFJSOUI,TZWYJGw,WYTIVNMI}.
\end{rem}

\section{Preliminaries}\label{preliminaries}
In this subsection, we establish preliminary estimates involving  $\mathcal{A}$, $\mathrm{div}\eta$,
and the nonlinear terms.
By exploiting the embedding inequalities and the product estimates, we derive a series of preliminary estimates for the solution $(\eta,u,w,p)$
under the condition
\begin{equation}\label{201811201630}
\sup_{0\leqslant t<T}\sqrt{\|\eta(t)\|_{6}^2+\|(u,w)(t)\|^2_5}< \delta\in(0,1)
\;\;\mbox{for some}\;\;T>0,
\end{equation}
where  $\delta$ is sufficiently small.
We mention that these estimates under the condition \eqref{201811201630} will be frequently used in the proof of Theorem \ref{thm1}.

\subsection{Estimates involving  $\mathcal{A}$ and $\mathrm{div}\eta$ }
\begin{lem}\label{AestimstfroA}
Under assumption \eqref{201811201630} with sufficiently small $\delta$, we have
\begin{enumerate}
\item[(1)] Estimates for $\mathcal{A}$ and $\tilde{\mathcal{A}}:=\mathcal{A}-\mathbb{I}$: for $0\leqslant i\leqslant 5$, $1\leqslant j\leqslant 3$ and $0\leqslant k\leqslant 7-2j$,
\begin{align}
&\label{aimdse}
\|\mathcal{A}\|_{L^{\infty}} \lesssim 1 ,  \\[0.5mm]
&\label{prtislsafdsfsfds}
\|\tilde{\mathcal{A}}\|_{i} \lesssim\|\eta\|_{i+1},\\
&\label{prtislsafdsfs}
\| \partial_{t}^j\mathcal{A}\|_k\lesssim \sum_{l=0}^{j-1} \|\partial_t^l  u\|_{k+1}.
\end{align}
\item[(2)] Equivalent estimate
 \begin{align}\label{20190614fdsa1957}
\;\;\;\;\|\varphi\|_1 \lesssim \|\nabla_{\mathcal{A}}\varphi\|_0\lesssim \|\varphi\|_1\;\quad\mathrm{for \;any }\; \varphi\in H^1_0.
\end{align}
\end{enumerate}
\end{lem}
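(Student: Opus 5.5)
We rely throughout on the algebraic structure of $\mathcal{A}$. Since $\det(\nabla\eta+\mathbb{I})=1$ by \eqref{202110121654}, we have the cofactor formula $\mathcal{A}=\mathrm{cof}(\nabla\eta+\mathbb{I})$. Hence every entry of $\mathcal{A}$ is a polynomial of degree at most two in the entries of $\nabla\eta$, and $\tilde{\mathcal{A}}=\mathcal{A}-\mathbb{I}$ is a sum of terms that are linear or quadratic in $\nabla\eta$, with no constant part. This avoids any Neumann-series inversion.

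For \eqref{aimdse}, we use the Sobolev embedding $H^2(\Omega)\hookrightarrow L^\infty(\Omega)$, valid on the strip via an extension operator. It gives $\|\nabla\eta\|_{L^\infty}\lesssim\|\eta\|_3\leqslant\delta<1$, so $\|\mathcal{A}\|_{L^\infty}\lesssim 1+\|\nabla\eta\|_{L^\infty}+\|\nabla\eta\|_{L^\infty}^2\lesssim1$. For \eqref{prtislsafdsfsfds}, the linear part is bounded directly by $\|\nabla\eta\|_i$. For the quadratic part we use the standard product estimate $\|fg\|_i\lesssim\|f\|_{L^\infty}\|g\|_i+\|f\|_i\|g\|_{L^\infty}$, or for $i\geqslant2$ the algebra property of $H^i$. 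This gives $\|\nabla\eta\,\nabla\eta\|_i\lesssim\|\eta\|_3\|\eta\|_{i+1}\lesssim\|\eta\|_{i+1}$ for $0\leqslant i\leqslant5$. When $i<2$, we bound the $L^\infty$ factor by $\|\eta\|_3\leqslant\delta$, which is available from \eqref{201811201630} since $\|\eta\|_6<\delta$.

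For \eqref{prtislsafdsfs}, we differentiate the cofactor expression in time using $\eta_t=u$. This gives $\partial_t\mathcal{A}=\mathcal{L}(\nabla u)+\mathcal{Q}(\nabla\eta,\nabla u)$, with $\mathcal{L}$ linear and $\mathcal{Q}$ bilinear. Further time derivatives, by the Leibniz rule, produce sums of terms $\partial_t^{l}\nabla u$ and $\partial_t^{l_1}\nabla u\,\partial_t^{l_2}\nabla u$ with $l\leqslant j-1$ and $l_1+l_2\leqslant j-2$, plus terms of the form $\nabla\eta\,\partial_t^{l}\nabla u$. We bound each product in $H^k$ with $k\leqslant 7-2j$ by placing the factor of lower regularity in $L^\infty$ or $L^4$, using $H^2\hookrightarrow L^\infty$ and $H^1\hookrightarrow L^4$. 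The small factors $\|\nabla\eta\|$ and $\|\partial_t^l\nabla u\|$ are bounded by $\delta$-quantities, and each remaining factor is bounded by $\|\partial_t^l u\|_{k+1}$.

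The main obstacle is bookkeeping the regularity indices at the top level, where $j=1$ and $k=5$, so we need $\|\partial_t\mathcal{A}\|_5\lesssim\|u\|_6$. There the product $\nabla\eta\,\nabla u$ in $H^5$ requires $\|\eta\|_6\|u\|_3+\|\eta\|_3\|u\|_6\lesssim\|u\|_6$, which uses the bound $\|\eta\|_6<\delta$. For $j=2,3$, quadratic terms such as $\nabla u\,\nabla u_t$ in $H^{3}$ or $H^{1}$ must be split so that one factor is bounded by $\|u\|_5<\delta$ or is an admissible $\partial_t^lu$ norm. Where only smallness of $u$ is available, this may require bounding lower time derivatives of $u$ through the equation. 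However, the right-hand side of \eqref{prtislsafdsfs} allows a sum over $l$, so we can keep the mixed products as sums of the listed norms after using a smallness bound on one factor, e.g. $\|u\|_3\lesssim\delta$.

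Finally, for \eqref{20190614fdsa1957} we write $\nabla_{\mathcal{A}}\varphi=\nabla\varphi+\nabla_{\tilde{\mathcal{A}}}\varphi$. The upper bound follows from \eqref{aimdse}. For the lower bound, we use $\|\nabla_{\tilde{\mathcal{A}}}\varphi\|_0\leqslant\|\tilde{\mathcal{A}}\|_{L^\infty}\|\nabla\varphi\|_0\lesssim\delta\|\nabla\varphi\|_0$, which gives $\|\nabla\varphi\|_0\lesssim\|\nabla_{\mathcal{A}}\varphi\|_0$ for $\delta$ small. We then control $\|\varphi\|_0\lesssim\|\partial_3\varphi\|_0$ by the Poincaré inequality in the vertical variable on $(0,1)$, which is valid because $\varphi|_{\partial\Omega}=0$.
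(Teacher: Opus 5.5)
Your proposal is correct and follows essentially the same route as the paper. It uses the cofactor representation $\mathcal{A}=\mathbb{I}+\tilde{\mathcal{A}}^{L}+\tilde{\mathcal{A}}^{N}$ (linear plus quadratic in $\nabla\eta$), the embedding $H^2\hookrightarrow L^\infty$ with product estimates, $\eta_t=u$ for the time derivatives, and absorption of $\nabla_{\tilde{\mathcal{A}}}\varphi$ by smallness together with the Friedrichs inequality for \eqref{20190614fdsa1957}. Your hedge about bounding time derivatives through the equation is unnecessary: every quadratic term in $\partial_t^j\mathcal{A}$ already contains a factor $\nabla\eta$ or $\nabla u$ with no time derivative, and that factor is small by \eqref{201811201630}.
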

\begin{pf}
In view of the definition of $\mathcal{A}$ and \eqref{202110121654}, it is easy to see that
\begin{align}\label{06131422}
\mathcal{A}=(\mathcal{A}_{mn})_{3\times 3}=(A^*_{mn})_{3\times 3},
\end{align} where
$A^{*}_{mn}$ is the algebraic complement minor of $(m,n)$-th entry of matrix $(\partial_m \zeta_n)_{3\times 3}$.
We now define
$$  \tilde{\mathcal{A}}^{{L}}:=\left(\begin{array}{ccc}
\partial_2\eta_2+\partial_3\eta_3 &
-\partial_1\eta_2 &
- \partial_1\eta_3 \\
-\partial_2\eta_1 &
\partial_1\eta_1+\partial_3\eta_3 &
- \partial_2\eta_3 \\
-\partial_3\eta_1 &
- \partial_3\eta_2 &
\partial_1\eta_1+\partial_2\eta_2
        \end{array}\right)
$$
and
$$ \tilde{\mathcal{A}}^{{N}}:=\left(\begin{array}{ccc}
\partial_2\eta_2\partial_3\eta_3-\partial_2\eta_3\partial_3\eta_2 &
\partial_1\eta_3\partial_3\eta_2-\partial_1\eta_2\partial_3\eta_3 &
\partial_1\eta_2\partial_2\eta_3-\partial_1\eta_3\partial_2\eta_2    \\
\partial_2\eta_3\partial_3\eta_1-\partial_2\eta_1\partial_3\eta_3 &
\partial_1\eta_1\partial_3\eta_3-\partial_1\eta_3\partial_3\eta_1 &
\partial_1\eta_3\partial_2\eta_1-\partial_2\eta_3\partial_1\eta_1    \\
\partial_2\eta_1\partial_3\eta_2-\partial_2\eta_2\partial_3\eta_1 &
\partial_1\eta_2\partial_3\eta_1-\partial_1\eta_1\partial_3\eta_2 &
\partial_1\eta_1\partial_2\eta_2-\partial_1\eta_2\partial_2\eta_1
        \end{array}\right).
$$
Consequently, it follows from \eqref{06131422} and $\tilde{\mathcal{A}}:=\mathcal{A}-\mathbb{I}$ that
\begin{align}\label{202112121550}
\tilde{\mathcal{A}}=\tilde{\mathcal{A}}^{{L}}+\tilde{\mathcal{A}}^{{N}}\quad\mbox{and}\quad
\mathcal{A}=\mathbb{I}+\tilde{\mathcal{A}}^{{L}}+\tilde{\mathcal{A}}^{{N}}.
\end{align}
By applying \eqref{embed2} and \eqref{product}, we readily obtain \eqref{aimdse}--\eqref{prtislsafdsfsfds} under \eqref{201811201630} with sufficiently small $\delta$.
Moreover, recalling that $\eta_t=u$,
and employing the product estimate \eqref{product} and the assumption \eqref{201811201630}, we infer that
$$  \begin{aligned}
&   \|\partial_t^l \mathcal{A}_{mn}\|_k \lesssim  \sum_{r=0}^{l-1} \|\partial_t^r  u\|_{k+1}\quad
\mbox{for}\;1\leqslant l\leqslant j\leqslant3,
\end{aligned}$$
which yields \eqref{prtislsafdsfs} immediately.

Finally, in view of \eqref{prtislsafdsfsfds} and the Friedrichs inequality \eqref{friedrich}, we have
$$\begin{aligned}
&\|\nabla_{\mathcal{A}}\varphi\|_{0}\lesssim\|\nabla_{\tilde{\mathcal{A}}} \varphi\|_{0}+\|\nabla\varphi\|_{0}\lesssim\|\varphi\|_1,\\[1mm]
&\|\varphi\|_1\lesssim\|\nabla\varphi\|_{0}\lesssim\|\nabla_{\tilde{\mathcal{A}}} \varphi\|_{0}+\|\nabla_{\mathcal{A}}\varphi\|_{0}\lesssim\|\eta\|_3\|\varphi\|_1+\|\nabla_{\mathcal{A}}\varphi\|_{0},
\end{aligned}$$
which implies \eqref{20190614fdsa1957} for sufficiently small $\delta$.
This completes the proof.
\hfill $\Box$
\end{pf}

The following lemma gives bounds in terms of $\mathrm{div}\eta$.
\begin{lem}\label{201811202046}
Under assumption \eqref{201811201630} with sufficiently small $\delta$, we have
\begin{align}
 &\label{11202054}
 \|\mathrm{div}\eta\|_i \lesssim\|\eta\|_{3}\|\eta\|_{i+1}\quad\mathrm{for}\;\;0\leqslant i\leqslant 5. 
\end{align}
In particular,
\begin{align}
&\label{202209210921nn}
\|(\bar{B}\cdot\nabla)\mathrm{div}\eta\|_{4}\lesssim\|\eta\|_{5}\|(\bar{B}\cdot\nabla)\eta\|_{5}.\quad
\end{align}
\end{lem}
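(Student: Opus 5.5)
The starting point is the volume-preserving condition \eqref{202110121654}, $\det(\mathbb{I}+\nabla\eta)=1$. Expanding the determinant gives
\begin{align*}
1=\det(\mathbb{I}+\nabla\eta)=1+\mathrm{div}\eta+\mathcal{Q}_2(\nabla\eta)+\det\nabla\eta ,
\end{align*}
where $\mathcal{Q}_2(\nabla\eta)$ is the sum of the principal $2\times2$ minors of $\nabla\eta$. Comparing with the matrix $\tilde{\mathcal{A}}^{N}$ from the proof of Lemma \ref{AestimstfroA}, $\mathcal{Q}_2(\nabla\eta)$ equals one half of the trace of $\tilde{\mathcal{A}}^{N}$. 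Hence
\begin{align*}
\mathrm{div}\eta=-\mathcal{Q}_2(\nabla\eta)-\det\nabla\eta .
\end{align*}
The right-hand side is a sum of quadratic and cubic products of first derivatives of $\eta$. The whole proof then reduces to product estimates.

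For \eqref{11202054} with $0\leqslant i\leqslant5$, I would apply the product estimate \eqref{product} to each quadratic term $\partial_a\eta_b\,\partial_c\eta_d$. The aim is $\|fg\|_i\lesssim\|f\|_{2}\|g\|_i+\|f\|_i\|g\|_2$ in 3D, or the variant with an $L^\infty$ factor via \eqref{embed2}. With $f,g$ components of $\nabla\eta$ this gives $\|\nabla\eta\|_2\|\nabla\eta\|_i\lesssim\|\eta\|_3\|\eta\|_{i+1}$. For $i\leqslant1$ I would use the cruder bound $\|fg\|_i\lesssim\|f\|_{L^\infty}\|g\|_i+\|\nabla f\|_{L^3}\|g\|_{L^6}$ together with $\|\nabla\eta\|_{L^\infty}\lesssim\|\eta\|_3$. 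The cubic terms are treated the same way, with the extra factor bounded by $\|\nabla\eta\|_{L^\infty}$ or $\|\eta\|_3$. By \eqref{201811201630} and $\delta<1$, $\|\eta\|_3\lesssim1$, so the cubic terms are absorbed into the quadratic bound.

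For \eqref{202209210921nn} I would apply the tangential-type operator $\bar{B}\cdot\nabla$, which has constant coefficients and therefore commutes with $\nabla$, to the identity above. By the Leibniz rule, $(\bar{B}\cdot\nabla)(\partial_a\eta_b\,\partial_c\eta_d)=\partial_a(\bar{B}\cdot\nabla)\eta_b\,\partial_c\eta_d+\partial_a\eta_b\,\partial_c(\bar{B}\cdot\nabla)\eta_d$, and the cubic terms expand similarly. Each resulting product has one factor $\nabla(\bar{B}\cdot\nabla)\eta$ and one factor $\nabla\eta$. Taking the $H^4$ norm with \eqref{product}, I estimate $\|\nabla(\bar{B}\cdot\nabla)\eta\|_4\lesssim\|(\bar{B}\cdot\nabla)\eta\|_5$ and put $\nabla\eta$ in $H^4$ or $L^\infty$ with $\|\nabla\eta\|_{L^\infty}\lesssim\|\eta\|_3\leqslant\|\eta\|_5$.

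The only step needing care is the distribution of derivatives in this last estimate. Since $\|\nabla\eta\|_4\lesssim\|\eta\|_5$ and $\|\nabla(\bar{B}\cdot\nabla)\eta\|_4\lesssim\|(\bar{B}\cdot\nabla)\eta\|_5$ are both available, the product estimate $\|fg\|_4\lesssim\|f\|_4\|g\|_4$ (valid because $H^4$ is an algebra in 3D) gives exactly $\|\eta\|_5\|(\bar{B}\cdot\nabla)\eta\|_5$. No derivative needs to be shifted onto the wrong factor, so I expect the whole lemma to be routine.
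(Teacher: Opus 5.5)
Your proposal is correct and follows the paper's proof essentially verbatim. Like the paper, you expand $\det(\nabla\eta+\mathbb{I})=1$ to get $\mathrm{div}\eta=-(r_2^{\eta}+r_3^{\eta})$, apply $(\bar{B}\cdot\nabla)$ to this identity, and conclude with the product estimate \eqref{product} and the smallness of $\|\eta\|_3$. One harmless slip: $\mathcal{Q}_2(\nabla\eta)=r_2^{\eta}$ equals the full trace of $\tilde{\mathcal{A}}^{N}$, not one half of it.
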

\begin{pf}
In view of \eqref{202110121654}, we expand the determinant $\mathrm{det}(\nabla\eta+\mathbb{I})$ to infer that
\begin{align*}
1=\mathrm{det}\nabla\zeta=\mathrm{det}(\nabla\eta+\mathbb{I})
=1+\mathrm{div}\eta+r_2^{\eta}+r_3^{\eta},
\end{align*}
where
$$\begin{aligned}
&r_2^{\eta}:=-\partial_3\eta_2\partial_2\eta_3-\partial_3\eta_1\partial_1\eta_3-\partial_2\eta_1\partial_1\eta_2
+\partial_2\eta_2\partial_3\eta_3+\partial_1\eta_1\partial_3\eta_3+\partial_1\eta_1\partial_2\eta_2,\\[1mm]
&r_3^{\eta}:=\partial_1\eta_1(\partial_2\eta_2\partial_3\eta_3-\partial_2\eta_3\partial_3\eta_2)
-\partial_2\eta_1(\partial_1\eta_2\partial_3\eta_3-\partial_1\eta_3\partial_3\eta_2)
+\partial_3\eta_1(\partial_1\eta_2\partial_2\eta_3-\partial_1\eta_3\partial_2\eta_2).
\end{aligned}$$
Consequently, it follows that
\begin{align*}
&\mathrm{div}\eta=-(r_2^{\eta}+r_3^{\eta}),\nonumber\\[1mm]
&\mathrm{div}(\bar{B}\cdot\nabla)\eta=-((\bar{B}\cdot\nabla)r_2^{\eta}+(\bar{B}\cdot\nabla)r_3^{\eta})\nonumber.
\end{align*}
Finally, taking the $\|\cdot\|_{i}$ and $\|\cdot\|_{4}$ norms of the above two identities, respectively,
and using \eqref{product}, we arrive at \eqref{11202054} and \eqref{202209210921nn} under the assumption \eqref{201811201630} with sufficiently small $\delta$.
This completes the proof.
\hfill $\Box$
\end{pf}

\subsection{Estimates involving the nonlinear terms}

Next, we establish the estimates of the aforementioned nonlinear terms in \eqref{202609221247n}, which will be utilized in the derivation of energy evolution.
\begin{lem}\label{lem:nonlinear}
Under assumption \eqref{201811201630} with sufficiently small $\delta$, we have
\begin{enumerate}[\quad (1)]
  \item  Estimates for $\mathcal{N}^1 $:
\begin{align}\label{06011711jumpv}
&\|\mathcal{N}^1 \|_j
\lesssim
\begin{cases}
\| \eta \|_{3}(\|u\|_{j+2}+\|(\nabla\times w,\nabla p)\|_{j})&\mathrm{for }\;\; 0\leqslant j\leqslant 1;\\[1mm]
\| \eta \|_{3}(\|u\|_{j+2}+\|(\nabla\times w,\nabla p)\|_{j})\\
\;+(\|u\|_3+\|(\nabla\times w,\nabla p)\|_{1})\|\eta\|_{j+2}
&\mathrm{for }\;\; 2\leqslant j\leqslant 4.
\end{cases}\qquad\quad
\end{align}
\item  Estimates for $\mathcal{N}^2 $:
\begin{align}
&\label{11210836}
\|(\mathcal{N}^2 ,\mathcal{N}^3) \|_{j}\lesssim
\begin{cases}
\| \eta \|_{3}\|u\|_{j+1}
\quad&\mathrm{ for }\;\; 0\leqslant j\leqslant 2;\\[0.5mm]
\|\eta\|_3\|u\|_{j+1}+\|u\|_3\|\eta\|_{j+1}
\quad&\mathrm{ for }\;\; 3\leqslant j\leqslant 5.
\end{cases}\qquad\qquad
\end{align}
\item Estimates for the temporal derivatives of $ \mathcal{N}^1 $, $ \mathcal{N}^2 $ and $ \mathcal{N}^3 $:
\begin{align}
&\label{2021081110937}
\|\partial_t\mathcal{N}^1 \|_1\lesssim\|(\eta,u)\|_3\sum_{l=0}^1\left(\|\partial_t^lu\|_{3}+\|\partial_t^l(\nabla p,\nabla w)\|_{1}\right),\\
&\label{202108110942nm}
\|\partial_t^j\mathcal{N}^1\|_{4-2j}\lesssim\|(\eta,u)\|_4\sum_{l=0}^j\left(\|\partial_t^lu\|_{6-2j}+\|\nabla\partial_t^lp\|_{4-2j}\right),\\
& \label{10202100m}\|\partial_t^{j}(\mathcal{N}^2 ,\mathcal{N}^3)\|_k\lesssim
 \|u\|_3\sum_{  l=0}^{j-1}\|\partial_t^l u\|_{k+1}
 +\|\eta\|_{4}(\|\partial_t^j u\|_{k+1} +\|\partial_t^j u\|_{3}),\qquad
\end{align}
where $1\leqslant j\leqslant 2$ and $0\leqslant k\leqslant 5-2j$.
\end{enumerate}
\end{lem}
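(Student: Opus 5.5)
The plan is to write each nonlinear term as a sum of products of a factor built from $\tilde{\mathcal{A}}$ (or $\partial_t^l\mathcal{A}$) with a linear factor in $(u,w,p)$. Each product is then bounded with the product estimate \eqref{product} and the embedding \eqref{embed2}, combined with Lemma \ref{AestimstfroA}.

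First we expand the operators in $\mathcal{N}^1,\mathcal{N}^2,\mathcal{N}^3$ componentwise. We have
\[
\mathrm{div}_{\tilde{\mathcal{A}}}\nabla u^{\top}=\tilde{\mathcal{A}}_{jl}\partial_l\partial_j u,\qquad
\mathrm{div}_{\mathcal{A}}\nabla_{\tilde{\mathcal{A}}}u^{\top}=\mathcal{A}_{jl}\partial_l(\tilde{\mathcal{A}}_{jm}\partial_m u).
\]
The terms $\nabla_{\tilde{\mathcal{A}}}p$, $\nabla_{\tilde{\mathcal{A}}}\times w$, $\nabla_{\tilde{\mathcal{A}}}\times u$ and $\mathrm{div}_{\tilde{\mathcal{A}}}u$ each have the form $\tilde{\mathcal{A}}\,\partial f$.

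For $\mathcal{N}^1$, the second-order part is a sum of terms $\mathcal{A}\tilde{\mathcal{A}}\nabla^2u$ and $\mathcal{A}\,\nabla\tilde{\mathcal{A}}\,\nabla u$. We take $\|\cdot\|_j$ and use $H^2\hookrightarrow L^\infty$ together with the standard product rule $\|fg\|_j\lesssim\|f\|_{L^\infty}\|g\|_j+\|f\|_j\|g\|_{L^\infty}$ (in the form \eqref{product}). By \eqref{aimdse} and \eqref{prtislsafdsfsfds} this gives $\|\tilde{\mathcal{A}}\|_{2}\lesssim\|\eta\|_3$ and $\|\tilde{\mathcal{A}}\|_j\lesssim\|\eta\|_{j+1}$.

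For $j\le1$ the full derivative can be put on the linear factor, using $\|fg\|_1\lesssim\|f\|_2\|g\|_1$. The one exception is the term $\nabla\tilde{\mathcal{A}}\nabla u$, which we bound by $\|\nabla\tilde{\mathcal{A}}\|_{L^4}\|\nabla u\|_{L^4}$ in $L^2$ and its analogue in $H^1$; this yields $\|\eta\|_3\|u\|_{j+2}$. For $2\le j\le4$ we split by where the highest derivative falls:
\begin{itemize}
\item if it falls on the linear factor, we obtain $\|\eta\|_3(\|u\|_{j+2}+\|(\nabla\times w,\nabla p)\|_j)$;
\item if it falls on $\tilde{\mathcal{A}}$, we obtain $\|\eta\|_{j+2}$ times an $L^\infty$ or $W^{1,4}$ norm of the linear factor, which is controlled by $\|u\|_3+\|(\nabla\times w,\nabla p)\|_1$.
\end{itemize}
The factor $\mathcal{A}$ in front is handled by $\mathcal{A}=\mathbb{I}+\tilde{\mathcal{A}}$ and smallness $\|\eta\|_3\lesssim\delta$, so it only produces absorbable higher products. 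The same splitting gives \eqref{11210836} for $\mathcal{N}^2,\mathcal{N}^3=\tilde{\mathcal{A}}\nabla u$, with the threshold at $j=2$ because $\|\tilde{\mathcal{A}}\|_{2}\lesssim\|\eta\|_3$ suffices until $j\ge3$.

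For the temporal derivatives we apply Leibniz in $t$: $\partial_t^j(\tilde{\mathcal{A}}\partial f)=\sum_l\binom{j}{l}\partial_t^{j-l}\mathcal{A}\,\partial_t^l\partial f$. We use \eqref{prtislsafdsfs} to bound $\|\partial_t^{m}\mathcal{A}\|_k\lesssim\sum_{r<m}\|\partial_t^ru\|_{k+1}$. The terms with $l=j$ carry $\tilde{\mathcal{A}}$ and give the $\|\eta\|_4\|\partial_t^ju\|$ contributions. The terms with $l<j$ carry at least one time derivative on $\mathcal{A}$ and give $\|u\|_3\sum_{l<j}\|\partial_t^lu\|_{k+1}$; there we use $\|\partial_t\mathcal{A}\|_{2}\lesssim\|u\|_3$ and, for $j=2$, $\|\partial_t^2\mathcal{A}\|\lesssim\|u\|+\|u_t\|$ multiplied by the lower factor.

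For $\partial_t^j\mathcal{N}^1$ there are also terms $\partial_t(\mathcal{A}\tilde{\mathcal{A}})$ and $\partial_t\nabla\tilde{\mathcal{A}}$. These are treated identically, with the norm indices $4-2j$ and $6-2j$ chosen so that the Sobolev embeddings close in 3D. For \eqref{2021081110937} we additionally keep $\|\partial_t^l\nabla w\|_1$ from the term $\nabla_{\tilde{\mathcal{A}}}\times w$.

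The main obstacle is the bookkeeping at the top order: in \eqref{202108110942nm} with $j=2$, and in \eqref{10202100m} with $k=5-2j$, one needs to check that $\partial_t^2\mathcal{A}$ appears only with low spatial norms. Otherwise $\|\partial_t u\|$ in a high norm would be needed. We would handle this by always placing $L^\infty$ or $L^4$ on the factor with more time derivatives when its spatial index is low, and by using $\|\partial_t^2\mathcal{A}\|_{0}\lesssim\|u\|_1+\|u_t\|_1$ from \eqref{prtislsafdsfs} together with $\|\cdot\|_{L^\infty}\lesssim\|\cdot\|_2$ on the other factor.
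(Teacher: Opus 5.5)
Your route is the paper's route. Its proof lists the principal parts $\mathcal{N}^1\sim\mathcal{A}\nabla(\tilde{\mathcal{A}}\nabla u)+\tilde{\mathcal{A}}\nabla^2u+\tilde{\mathcal{A}}\nabla p+\tilde{\mathcal{A}}\nabla w$ and $\mathcal{N}^2,\mathcal{N}^3\sim\tilde{\mathcal{A}}\nabla u$, then invokes \eqref{aimdse}--\eqref{prtislsafdsfs}, \eqref{product}, interpolation and $\eta_t=u$. Your treatment of \eqref{11210836}, \eqref{2021081110937} and \eqref{10202100m} goes through. However, in three places you claim the bookkeeping yields the printed bound when it does not. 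It cannot, because the printed bound is stronger than what the nonlinear terms allow; the paper's one-line proof passes over the same points.

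First, $(\nabla_{\tilde{\mathcal{A}}}\times w)_i=\epsilon_{ijk}\tilde{\mathcal{A}}_{jl}\partial_l w_k$ involves the whole gradient of $w$. Putting the derivatives on the linear factor therefore gives $\|\eta\|_3\|\nabla w\|_j$, not $\|\eta\|_3\|\nabla\times w\|_j$. For $w=\nabla\phi$ the curl vanishes, while $\epsilon_{ijk}(\tilde{\mathcal{A}}\nabla^2\phi)_{jk}$ does not in general, so no product estimate can produce $\|\nabla\times w\|_j$ in \eqref{06011711jumpv}.

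Second, $\partial_t^j\mathcal{N}^1$ contains $\nabla_{\tilde{\mathcal{A}}}\times\partial_t^j w$ and $\nabla_{\partial_t^m\mathcal{A}}\times\partial_t^{j-m}w$. The $w$-free right-hand side of \eqref{202108110942nm} cannot control these. You keep such terms for \eqref{2021081110937} but drop them without comment for \eqref{202108110942nm}.

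Third, your fix for the top-order case $j=2$ of \eqref{202108110942nm} fails for $\nabla_{\partial_t^2\mathcal{A}}p$. Your recipe gives $\|\partial_t^2\mathcal{A}\|_0\|\nabla p\|_{L^\infty}\lesssim(\|u\|_1+\|u_t\|_1)\|\nabla p\|_2$. Every other H\"older split also leaves $\|\nabla p\|_1$ or $\|\nabla p\|_2$ outside the allowed prefactor $\|(\eta,u)\|_4$; putting $\nabla u_t$ in $L^\infty$ would need $\|u_t\|_3$, which is unavailable since $6-2j=2$. In fact the right-hand side of \eqref{202108110942nm} vanishes whenever $\eta=u=0$. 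At such a state $\partial_t^2\mathcal{N}^1$ reduces to $-\nabla_{\partial_t^2\mathcal{A}}p+2\chi\nabla_{\partial_t^2\mathcal{A}}\times w$, with $\partial_t^2\mathcal{A}$ linear in $\nabla u_t$, and this is nonzero in general.

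So your argument actually proves the lemma in a corrected form:
\begin{itemize}
\item \eqref{06011711jumpv} with $\nabla w$ in place of $\nabla\times w$;
\item \eqref{202108110942nm} with $\sum_{l=0}^{j}\|\nabla\partial_t^l w\|_{4-2j}$ added to the sum and $\|(\nabla p,\nabla w)\|_2$ added to the prefactor.
\end{itemize}
You should state these corrected bounds explicitly rather than assert the printed ones. They are harmless for the later use; for instance, in \eqref{202604271347} the extra factors are bounded by $\sqrt{\mathcal{E}_H}$ and $\sqrt{\mathcal{D}_H}$.

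A smaller point concerns $2\leqslant j\leqslant4$. When the derivatives land on $\tilde{\mathcal{A}}$ in $\tilde{\mathcal{A}}\nabla^2u$ or $\tilde{\mathcal{A}}\nabla p$, the linear factor is not controlled in $L^\infty$ or $W^{1,4}$ by $\|u\|_3$ or $\|\nabla p\|_1$. The correct pairing is $\|\partial^j\tilde{\mathcal{A}}\|_{L^4}\lesssim\|\eta\|_{j+2}$ against $\|\nabla^2u\|_{L^4}+\|\nabla p\|_{L^4}\lesssim\|u\|_3+\|\nabla p\|_1$, with interpolation for the intermediate terms. Equivalently, interpolate the term $\|\eta\|_{j+1}\|u\|_4$ produced by \eqref{product} between $\|\eta\|_3\|u\|_{j+2}$ and $\|\eta\|_{j+2}\|u\|_3$.
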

\begin{pf}
Recalling the definitions of $\mathcal{N}^1$, $\mathcal{N}^2$ and $\mathcal{N}^3$, we identify their principal terms as
\begin{align*}
&\mathcal{N}^1\sim\mathcal{A}\nabla(\tilde{\mathcal{A}}\nabla u)+\tilde{\mathcal{A}}\nabla^2 u+\tilde{\mathcal{A}}\nabla p+\tilde{\mathcal{A}}\nabla w,
\quad\mathcal{N}^2\sim\tilde{\mathcal{A}}\nabla u\quad\mbox{and}
\quad\mathcal{N}^3\sim\tilde{\mathcal{A}}\nabla u.
\end{align*}
By virtue of the preliminary estimates \eqref{aimdse}--\eqref{prtislsafdsfs}, the product estimate \eqref{product} and the interpolation inequality,
we readily obtain \eqref{06011711jumpv}--\eqref{11210836}.
Furthermore, recalling that $\eta_t=u$, we can similarly deduce \eqref{2021081110937}--\eqref{10202100m}.
This completes the proof.
\hfill $\Box$
\end{pf}

In order to exploit the nonlinear structure of \eqref{202609221247} for deriving the tangential energy  estimates
of the temporal derivatives as it is unable to control the interaction between $\partial_t^2p$ and $\mathrm{div}\partial_t^3u$,
we apply $\partial_t^{j}$ ($1\leqslant j\leqslant3$) to \eqref{202609221247} to obtain
\begin{equation}\label{202609221247temporal}
\begin{cases}
\partial_t^{j+1}u +\nabla_{\mathcal{A}}\partial_t^{j}p- (\mu + \chi)\Delta_{\mathcal{A}}\partial_t^{j}u = \lambda (\bar{B}\cdot\nabla)^2\partial_t^{j-1}u +2\chi\nabla_{\mathcal{A}}\times \partial_t^{j}w +\mathcal{N}^{t,j} &\mbox{ in } \Omega ,\\[0.5mm]
\partial_t^{j+1}w +4\chi \partial_t^{j}w=2\chi\nabla_{\mathcal{A}}\times \partial_t^{j}u +\mathcal{M}^{t,j} &\mbox{ in } \Omega ,\\[0.5mm]
\mathrm{div}_{\mathcal{A}}\partial_t^{j}u=\mathrm{div}D^{t,j}_u &\mbox{ in } \Omega ,\\[0.5mm]
(\partial_t^{j-1}u, \partial_t^{j}u)=(0,0) & \mbox{ on }\partial\Omega,
\end{cases}
\end{equation}
where we have defined that
\begin{align}
&\mathcal{N} ^{t,j}:= \sum_{l=1}^jC_{j}^l\big(
(\mu+\chi)\mathrm{div}_{{\mathcal{A}}}\nabla_{\partial_t^l {\mathcal{A}}}(\partial_t^{j-l}u)^{\top}
+(\mu+\chi)\mathrm{div}_{\partial_t^l {\mathcal{A}}}\partial_t^{j-l}(\nabla_{\mathcal{A}}u^{\top})\qquad\nonumber\\
&\qquad\qquad\qquad\;+2\chi\nabla_{\partial_t^l\mathcal{A}}\times \partial_t^{j-l}w
-\nabla_{\partial_t^l\mathcal{A}}\partial_t^{j-l}p\big),\nonumber\\
&\mathcal{M} ^{t,j} :=2\chi\sum_{l=1}^jC_j^l\nabla_{\partial_t^l\mathcal{A}}\times \partial_t^{j-l}u,\nonumber\\
&\label{partialusd} D^{t,j}_u:= -\sum_{l=1}^{j}C_j^{l}\partial_t^{l}{\mathcal{A}}^{\top}\partial_t^{j-l} u.
\end{align}
Here, the notation $C_j^{l}$ denotes the number of $l$-combinations from a given set of $j$ elements, and we have also used the relation \eqref{diverelation2} in \eqref{partialusd}.
We now establish the estimates for the nonlinear terms $\mathcal{N} ^{t,j}$, $ \mathcal{M} ^{t,j}$ and $D^{t,j}_u$ as follows.
\begin{lem}
\label{lem:0933}
 Under assumption \eqref{201811201630} with sufficiently small $\delta$, there holds that
\begin{align}
&\label{202108082010}
\|\mathcal{N}^{t,1}  \|_i + \|\mathcal{M} ^{t,1} \|_{i}+\| D^{t,1}_u\|_i\lesssim \|u\|_{3}(\|u\|_{2+i}+\|(\nabla p,\nabla w)\|_{i})\quad\mathrm{for}\;\;0\leqslant i\leqslant2,\\[1.5mm]
&\label{badiseqin3nes1n0}
\|\mathcal{N}^{t,2} \|_0+\|\mathcal{M}^{t,2} \|_0+\|D^{t,2}_u\|_0+\|D^{t,3}_u\|_0
\nonumber\\
&\lesssim
\big(\|(\eta,u)\|_3+\|(\nabla w,\nabla p)\|_{2}+\|u_t\|_{1}\big)\left(\|u_t\|_{2}+\|u_{tt}\|_1+\|(\nabla p_t,\nabla w_t)\|_{0}\right).
\end{align}
\end{lem}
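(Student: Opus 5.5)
The plan is to read off the explicit structure of $\mathcal{N}^{t,j}$, $\mathcal{M}^{t,j}$ and $D^{t,j}_u$ from their definitions. Each is a finite sum of products of one factor $\partial_t^l\mathcal{A}$ with $l\geqslant1$, possibly $\mathcal{A}$ itself, and derivatives of $\partial_t^{j-l}(u,w,p)$. Each such product is then bounded by combining three ingredients from Lemma \ref{AestimstfroA}: \eqref{aimdse}, \eqref{prtislsafdsfs} (with the $\mathcal{A}$-bounds \eqref{prtislsafdsfsfds} for spatial derivatives of $\mathcal{A}$), and the product estimate \eqref{product} together with the embeddings \eqref{embed2}.

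\textbf{Case $j=1$.} Schematically,
\begin{align*}
\mathcal{N}^{t,1}&\sim \mathcal{A}\nabla(\partial_t\mathcal{A}\nabla u)+\partial_t\mathcal{A}\nabla(\mathcal{A}\nabla u)+\partial_t\mathcal{A}\nabla w+\partial_t\mathcal{A}\nabla p,\\
\mathcal{M}^{t,1}&\sim\partial_t\mathcal{A}\nabla u,\qquad D^{t,1}_u\sim\partial_t\mathcal{A}\, u.
\end{align*}
Since $\partial_t\mathcal{A}$ is linear in $\nabla u$ with coefficients polynomial in $\nabla\eta$, \eqref{prtislsafdsfs} gives $\|\partial_t\mathcal{A}\|_k\lesssim\|u\|_{k+1}$. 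For $0\leqslant i\leqslant 2$, the product estimate $\|fg\|_i\lesssim\|f\|_{L^\infty}\|g\|_i+\|f\|_{i}\|g\|_{L^\infty}$ (or its $H^1\times H^{2}$-type variants for $i\leqslant1$) lets us put the low-regularity factor $\partial_t\mathcal{A}$ in $H^2\hookrightarrow L^\infty$ or in $W^{1,4}$, using at most $\|u\|_3$. The other factor then carries $\|u\|_{i+2}$ or $\|(\nabla p,\nabla w)\|_i$. When all $i$ derivatives fall on $\partial_t\mathcal{A}$, we need $\|\partial_t\mathcal{A}\|_{i+1}\lesssim\|u\|_{i+2}$, multiplied by $\|\nabla u\|_{L^\infty}+\|\nabla p\|_{L^\infty}$. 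For the latter we must avoid $\|\nabla p\|_{L^\infty}$ and instead use an $L^4$–$L^4$ or $L^3$–$L^6$ splitting: with $i\leqslant2$, $\|\partial^i\partial_t\mathcal{A}\,\nabla p\|_0\lesssim\|\partial_t\mathcal{A}\|_{i+1}\|\nabla p\|_{1}$ only when $i\leqslant 1$. For $i=2$ we write $\|\partial^2\partial_t\mathcal{A}\|_{L^3}\|\nabla p\|_{L^6}\lesssim\|u\|_{4}\|\nabla p\|_1$, which is fine since $\|u\|_4=\|u\|_{2+i}$ and $\|\nabla p\|_1\leqslant\|\nabla p\|_i$. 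For the factor $\|u\|_3$, smallness of $\eta$ from \eqref{201811201630} handles the polynomial dependence on $\nabla\eta$.

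\textbf{Case $j=2$, and $D^{t,3}_u$.} Here the terms are $\partial_t^2\mathcal{A}\cdot(\nabla^2u,\nabla u,\nabla w,\nabla p)$ and $\partial_t\mathcal{A}\cdot(\nabla^2u_t,\nabla w_t,\nabla p_t)$, plus the derivative-on-$\mathcal{A}$ analogues. By \eqref{prtislsafdsfs}, $\|\partial_t^2\mathcal{A}\|_k\lesssim\|u\|_{k+1}+\|u_t\|_{k+1}$; more precisely $\partial_t^2\mathcal{A}\sim\nabla u_t+\nabla u\nabla u$. We estimate only in $L^2$. Terms of the form $\partial_t\mathcal{A}\cdot(\nabla^2 u_t,\nabla p_t,\nabla w_t)$ are bounded by $\|u\|_3(\|u_t\|_2+\|(\nabla p_t,\nabla w_t)\|_0)$. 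Terms of the form $\nabla u_t\cdot(\nabla^2u,\nabla w,\nabla p)$ are bounded via $L^3\times L^6$ by $\|u_t\|_2(\|u\|_3+\|(\nabla w,\nabla p)\|_1)$. The term $\nabla^2 u_t\cdot\nabla u$, coming from $\nabla(\partial_t^2\mathcal{A})$, is bounded by $\|u_t\|_2\|u\|_3$. The quadratic parts $\nabla u\nabla u$ are absorbed using $\|u\|_3$. For $D^{t,3}_u\sim\partial_t^3\mathcal{A}\,u+\partial_t^2\mathcal{A}\,u_t+\partial_t\mathcal{A}\,u_{tt}$, where $\partial_t^3\mathcal{A}\sim\nabla u_{tt}+\nabla u\nabla u_t+\dots$, we bound $\|\nabla u_{tt}\, u\|_0\lesssim\|u_{tt}\|_1\|u\|_2$, $\|\nabla u_t\, u_t\|_0\lesssim\|u_t\|_1\|u_t\|_2$, and $\|\partial_t\mathcal{A}\,u_{tt}\|_0\lesssim\|u\|_3\|u_{tt}\|_0$. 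This is exactly why $\|u_t\|_1$ and $\|u_{tt}\|_1$ appear on the right-hand side of \eqref{badiseqin3nes1n0}.

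\textbf{Main obstacle.} The only delicate point is bookkeeping: every product must carry exactly one factor from the "small" bracket and one from the bracket involving time derivatives, and no norm higher than allowed may appear. The worst cases are the terms with $\nabla p$ when all spatial derivatives fall on $\partial_t\mathcal{A}$ (case $i=2$), and $\nabla(\partial_t^2\mathcal{A})\nabla u$. In both, Hölder splittings ($L^3$–$L^6$, $L^4$–$L^4$) with Sobolev embeddings in the 3D strip resolve the issue.
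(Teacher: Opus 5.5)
Your route (expand $\mathcal N^{t,j}$, $\mathcal M^{t,j}$, $D^{t,j}_u$ into products of $\partial_t^l\mathcal A$ with derivatives of $\partial_t^{j-l}(u,w,p)$, then apply \eqref{product} and Lemma \ref{AestimstfroA}) is the paper's route; its proof is a one-line appeal to exactly these tools. Two of the steps you single out, however, do not produce the stated right-hand sides.

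\textbf{The pressure term at $i=2$.} This is the step you call the main obstacle. Your $L^3$--$L^6$ splitting gives $\|u\|_4\|\nabla p\|_1$. That is \emph{not} bounded by $\|u\|_3\big(\|u\|_4+\|(\nabla p,\nabla w)\|_2\big)$, because every product on the right of \eqref{202108082010} contains the factor $\|u\|_3$; there is no term $\|u\|_{2+i}\|\nabla p\|_i$ there. The difficulty is self-inflicted:
\begin{itemize}
\item In $\nabla_{\partial_t\mathcal A}p$ no derivative falls on $\partial_t\mathcal A$ beyond the $i$ you apply, so $\|\partial_t\mathcal A\|_{i+1}$ never occurs.
\item At $i=2$ there is no reason to avoid $\|\nabla p\|_{L^\infty}\lesssim\|\nabla p\|_2$.
\item The second case of \eqref{product} gives at once $\|\partial_t\mathcal A\,(\nabla p,\nabla w)\|_i\lesssim\|\partial_t\mathcal A\|_2\|(\nabla p,\nabla w)\|_i\lesssim\|u\|_3\|(\nabla p,\nabla w)\|_i$ for all $0\le i\le 2$.
\end{itemize}
Only in $\nabla\partial_t\mathcal A\,\nabla u$ must $\partial_t\mathcal A$ carry $\|u\|_{i+2}$, and there the partner $\nabla u$ goes into $L^\infty$ at the cost of $\|u\|_3$.

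\textbf{The $j=2$ estimate.} This is the more serious problem: your claim that the quadratic parts $\nabla u\nabla u$ are ``absorbed using $\|u\|_3$'' fails. Two sources produce cubic terms with no time derivative.
\begin{itemize}
\item Because of the quadratic cofactor part $\tilde{\mathcal A}^{N}$, $\partial_t^2\mathcal A$ contains $\nabla u\,\nabla u$.
\item $\mathrm{div}_{\partial_t\mathcal A}\partial_t(\nabla_{\mathcal A}u^\top)$ contains $\partial_t\mathcal A\,\nabla(\partial_t\mathcal A\,\nabla u)$.
\end{itemize}
Hence $\mathcal N^{t,2}$, $\mathcal M^{t,2}$ and $D^{t,2}_u$ contain terms such as $(\nabla u\nabla u)\nabla^2u$, $(\nabla u\nabla u)(\nabla p,\nabla w)$, $(\nabla u\nabla u)\nabla u$ and $(\nabla u\nabla u)\,u$. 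These are bounded by $\|u\|_3^2\big(\|u\|_3+\|(\nabla p,\nabla w)\|_1\big)$, which has no factor from $\|u_t\|_2+\|u_{tt}\|_1+\|(\nabla p_t,\nabla w_t)\|_0$. Every term of that bracket carries a time derivative, so no product estimate can supply one. Your own bookkeeping rule breaks down exactly here.

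The paper's proof passes over the same point; note that \eqref{prtislsafdsfs} itself only gives $\|\partial_t^2\mathcal A\|_k\lesssim\|u\|_{k+1}+\|u_t\|_{k+1}$. What the argument genuinely yields is \eqref{badiseqin3nes1n0} with $\|u\|_3^2\big(\|u\|_3+\|(\nabla w,\nabla p)\|_1\big)$ added to the right-hand side. This is harmless where the estimate is used, in Lemma \ref{badiseqinM}. There $\|u\|_3^2\big(\|u\|_3+\|(\nabla w,\nabla p)\|_1\big)\lesssim\mathcal E_H\sqrt{\mathcal D_H}$, and the extra term is either multiplied by a factor $\lesssim\sqrt{\mathcal D_H}$ or absorbed into $\|(\partial_t^3u,\partial_t^3w)\|_0^2$ by Young's inequality.

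Your bounds for $D^{t,3}_u$ are fine, since all of its terms contain a time derivative.
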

\begin{pf}
Similar to the derivation of Lemma \ref{lem:nonlinear},
we easily deduce the estimates \eqref{202108082010}--\eqref{badiseqin3nes1n0} via the product estimates \eqref{product} and the preliminary estimates \eqref{aimdse}--\eqref{prtislsafdsfs}.
\hfill $\Box$
\end{pf}

\section{A priori energy estimates}\label{energy estimates}
This section is devoted to deriving the \emph{a priori} energy estimates for the solution, which is  the key step in the proof of Theorem \ref{thm1}.
To this end, we let $(\eta,u,w,p)$ be the solution
of the problem \eqref{202609221247} such that
\begin{equation}\label{aprpioses}
\sqrt{\mathcal{G}_1(T)+ \sup_{t\in[0, T]}\mathcal{E}_H(t)}\leqslant \delta
\end{equation}
holds for some $T>0$ and sufficiently small $\delta\in(0,1)$.
Furthermore, we assume that the solution possesses proper regularity, so that the procedure of formal calculations makes sense.
Clearly, the preliminary estimates established in Section \ref{preliminaries} remain valid under  assumption \eqref{aprpioses}.

\subsection{Energy estimates for the horizontal derivatives}
In this subsection, we derive the energy estimates for the horizontal derivatives of $(\eta,u,w)$.
\begin{lem}\label{lem:21092401}
Under assumption \eqref{aprpioses} with sufficiently small $\delta$, there holds that
\begin{align}
&\label{202109231702}
\frac{\mathrm{d}}{\mathrm{d}t}\big(\|(\partial_\mathrm{h}^{i} u,\partial_\mathrm{h}^{i} w)\|^2_0+\lambda\|(\bar{B}\cdot\nabla)\partial_\mathrm{h}^{i}\eta\|_{0}^2\big)
+c\|(\nabla \partial_\mathrm{h}^{i} u,\partial_\mathrm{h}^{i}w)\|_0^2\nonumber\\[1mm]
&\lesssim\begin{cases}
\sqrt{\mathcal{E}_{H}}{\mathcal{D}}_{L}
& \;\;\;\mathrm{ for }\;\; 0\leqslant i\leqslant3;\\[1mm]
\sqrt{\mathcal{E}_{H}}{\mathcal{D}}_{H}+\|(\eta,u)\|_{3}\|\eta\|_{6}^2\quad
& \;\;\;\mathrm{ for }\;\; 4\leqslant i\leqslant5.
\end{cases}
\end{align}
\end{lem}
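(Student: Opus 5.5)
We apply $\partial_{\mathrm{h}}^i$ to the linear perturbed formulation \eqref{202609221247n}. Horizontal derivatives preserve the boundary conditions $(\eta,u)|_{\partial\Omega}=0$, so we may test the momentum equation against $\partial_{\mathrm{h}}^iu$ and the micro-rotation equation against $\partial_{\mathrm{h}}^iw$, and integrate by parts without boundary terms.

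Using $\eta_t=u$, the magnetic tension produces
\[
\lambda\int(\bar{B}\cdot\nabla)^2\partial_{\mathrm{h}}^i\eta\cdot\partial_{\mathrm{h}}^iu\,\mathrm{d}y=-\frac{\lambda}{2}\frac{\mathrm{d}}{\mathrm{d}t}\|(\bar{B}\cdot\nabla)\partial_{\mathrm{h}}^i\eta\|_0^2 .
\]
The viscous term gives $(\mu+\chi)\|\nabla\partial_{\mathrm{h}}^iu\|_0^2$. The pressure term gives $-\int\partial_{\mathrm{h}}^ip\,\partial_{\mathrm{h}}^i\mathcal{N}^3\mathrm{d}y$.

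The key linear step is the anti-symmetric coupling. Since $\int\nabla\times w\cdot u=\int w\cdot\nabla\times u$ when $u|_{\partial\Omega}=0$, the two coupling terms combine into $-4\chi\int\nabla\times\partial_{\mathrm{h}}^iu\cdot\partial_{\mathrm{h}}^iw$. Together with $4\chi\|\partial_{\mathrm{h}}^iw\|_0^2$, we use $\|\nabla\times f\|_0^2\le\|\nabla f\|_0^2$ for $f\in H^1_0$ (valid for divergence-free $f$ up to the $\mathcal{N}^3$ error, and in general via $\|\nabla f\|_0^2=\|\nabla\times f\|_0^2+\|\mathrm{div}f\|_0^2$ on $H_0^1$). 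We then write
\[
(\mu+\chi)\|\nabla v\|_0^2-4\chi\int\nabla\times v\cdot\omega+4\chi\|\omega\|_0^2\ \ge\ \mu\|\nabla v\|_0^2+\chi\|\nabla\times v-2\omega\|_0^2 .
\]
This yields a coercive form: indeed $\chi\|\nabla\times v\|_0^2-4\chi\int\nabla\times v\cdot\omega+4\chi\|\omega\|_0^2=\chi\|\nabla\times v-2\omega\|_0^2$. Combining this with $\mu\|\nabla v\|_0^2\ge\mu\|\nabla\times v\|_0^2$ gives $c\|(\nabla\partial_{\mathrm{h}}^iu,\partial_{\mathrm{h}}^iw)\|_0^2$ with $c>0$ depending on $\mu,\chi$. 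This is exactly where $\mu>0$ is needed.

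The remaining work is the nonlinear terms $\int\partial_{\mathrm{h}}^i\mathcal{N}^1\cdot\partial_{\mathrm{h}}^iu$, $\int\partial_{\mathrm{h}}^i\mathcal{N}^2\cdot\partial_{\mathrm{h}}^iw$ and the pressure–divergence term.

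For $0\le i\le3$, we move one horizontal derivative off $\mathcal{N}^1$ (for $i\ge1$), so the bound involves $\|\mathcal{N}^1\|_{i-1}\|u\|_{i+1}$. We then apply \eqref{06011711jumpv}, \eqref{11210836} with $j\le 3$. Every factor is either bounded by $\sqrt{\mathcal{E}_H}$ (e.g.\ $\|\eta\|_3$, $\|\eta\|_5$) or by $\sqrt{\mathcal{D}_L}$ (e.g.\ $\|u\|_3$, $\|\nabla p\|_1$, $\|w\|_2$). For $i=3$ some terms contain $\|u\|_4$ or $\|\nabla p\|_2$, which are not in $\mathcal{D}_L$. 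In those cases we keep the full $\partial_{\mathrm{h}}^3$ on the test function and integrate one derivative of $\nabla\partial_{\mathrm{h}}^3 u$ back, using $\|\cdot\|_{\underline{3},0}$-type quantities, so that each product has one $\mathcal{E}_H$-factor and two $\mathcal{D}_L$-factors. For the pressure term we write $\partial_{\mathrm{h}}^i\mathcal{N}^3=-\partial_{\mathrm{h}}^i\mathrm{div}(\tilde{\mathcal{A}}^\top u)$ via \eqref{diverelation2}, integrate by parts onto $\nabla\partial_{\mathrm{h}}^{i}p$ (no boundary term since $u=0$), and shift one horizontal derivative.

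For $i=4,5$ the same computation holds, with $\mathcal{D}_H$ in place of $\mathcal{D}_L$. The exceptional terms are those where all derivatives fall on $\tilde{\mathcal{A}}$, e.g.\ $\int\partial_{\mathrm{h}}^{4}(\nabla\tilde{\mathcal{A}}\,\nabla u)\cdot\partial_{\mathrm{h}}^{6}u$ and $\partial_{\mathrm{h}}^5\tilde{\mathcal{A}}\nabla p$. These produce $\|\eta\|_6$ multiplied by low norms of $u$ and $p$, giving the $\|(\eta,u)\|_3\|\eta\|_6^2$ term after Young's inequality (the companion factor $\|\eta\|_6\|\nabla u\|_{\underline 5,0}$ is absorbed using $\|\eta\|_6^2$ and dissipation). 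The main obstacle is this top-order bookkeeping. First, the pressure–$\tilde{\mathcal{A}}$ terms must avoid any need for $\|\nabla p\|_5$. Second, the term $\int\partial_{\mathrm{h}}^5(\tilde{\mathcal A}\nabla w)\cdot\partial_{\mathrm{h}}^5u$ requires $\|w\|_5\le\sqrt{\mathcal D_H}$ and one integration by parts in the horizontal direction, so that no $\|w\|_6$ appears. We would try to put the extra derivative on $u$ and use $\|\nabla u\|_{\underline5,0}$.
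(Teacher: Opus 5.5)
Your overall scheme matches the paper's: you test the $\partial_{\mathrm{h}}^i$-differentiated perturbed system against $\partial_{\mathrm{h}}^iu$ and $\partial_{\mathrm{h}}^iw$, symmetrize the curl coupling using $u|_{\partial\Omega}=0$, and get coercivity from $\mu\|\nabla v\|_0^2+\chi\|\nabla\times v-2\omega\|_0^2$. The gap is the pressure term $I_3=\int\partial_{\mathrm{h}}^ip\,\partial_{\mathrm{h}}^i\mathcal{N}^3\,\mathrm{d}y$, where your recipe does not close for $i\geqslant1$. Rewriting $\mathcal{N}^3=-\mathrm{div}(\tilde{\mathcal{A}}^{\top}u)$ and integrating by parts gives $\int\nabla\partial_{\mathrm{h}}^ip\cdot\partial_{\mathrm{h}}^i(\tilde{\mathcal{A}}^{\top}u)\,\mathrm{d}y$. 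Compared with the original form, one derivative has moved from $u$ onto $p$. But in the worst piece, $\partial_{\mathrm{h}}^i\tilde{\mathcal{A}}^{\top}u$, the factor $u$ enters only through $\|u\|_{L^\infty}$, so nothing is gained and one pressure derivative is lost.
\begin{itemize}
\item At $i=5$ this is fatal. Without a shift you need $\|\nabla p\|_5$, which lies in neither $\mathcal{E}_H$ nor $\mathcal{D}_H$. Shifting a horizontal derivative off $p$ creates $\partial_{\mathrm{h}}^6\tilde{\mathcal{A}}$, i.e.\ $\|\eta\|_7$. This is exactly the obstruction you call ``the main obstacle'' and leave open.
\item It already fails at $i=3$ for the target $\sqrt{\mathcal{E}_H}\mathcal{D}_L$. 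The piece $\nabla\partial_{\mathrm{h}}^3p\cdot\partial_{\mathrm{h}}^3\tilde{\mathcal{A}}^{\top}u$ is only bounded by $\|\nabla p\|_3\|\eta\|_4\|u\|_2$, or by $\|\nabla p\|_2\|\eta\|_5\|u\|_2$ after a shift. Both have two factors controlled only by $\mathcal{E}_H$.
\end{itemize}

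The missing observation is that the divergence form is needed only at $i=0$, where $p$ itself is not controlled. That is the only place the paper uses it, cf.\ \eqref{202604252050}, and there your argument is correct. For $i\geqslant1$, $\partial_{\mathrm{h}}^ip=\partial_{\mathrm{h}}(\partial_{\mathrm{h}}^{i-1}p)$ is already a derivative of the pressure, so keep $I_3$ in its original form.
\begin{itemize}
\item For $i=4,5$ the paper bounds it by $\|\nabla p\|_{i-1,0}\|\mathcal{N}^3\|_{i,0}$. By \eqref{11210836} this is at most $\|\nabla p\|_4(\|\eta\|_3\|u\|_6+\|u\|_3\|\eta\|_6)\lesssim\sqrt{\mathcal{E}_H}\mathcal{D}_H+\|u\|_3\|\eta\|_6^2$.
\item For $1\leqslant i\leqslant3$ it is cleanest to move one horizontal derivative onto $p$: $|I_3|=|\int\partial_{\mathrm{h}}^{i+1}p\,\partial_{\mathrm{h}}^{i-1}\mathcal{N}^3\,\mathrm{d}y|\lesssim\|\nabla p\|_3\|\eta\|_3\|u\|_3\lesssim\sqrt{\mathcal{E}_H}\mathcal{D}_L$.
\end{itemize}

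Two smaller remarks.
\begin{itemize}
\item $\|\eta\|_3^2$ belongs to $\mathcal{D}_L$, not only to $\mathcal{E}_H$. So at $i=3$ the terms of $\|\mathcal{N}^1\|_2$ containing $\|u\|_4$, $\|\nabla p\|_2$ or $\|w\|_3$ are harmless, since each is multiplied by $\|\eta\|_3$. This holds provided you pair them with $\|\partial_{\mathrm{h}}^4u\|_0\leqslant\|\nabla u\|_{\underline{3},0}$ rather than $\|u\|_4$; your special treatment there is unnecessary.
\item In $\int\partial_{\mathrm{h}}^5\mathcal{N}^2\cdot\partial_{\mathrm{h}}^5w\,\mathrm{d}y$ you must not shift a derivative onto $w$, because $\|w\|_6$ is not controlled. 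Bound it directly by $\|\mathcal{N}^2\|_5\|w\|_5$ and then use \eqref{11210836}.
\end{itemize}
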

\begin{pf}
Applying $\partial_{\mathrm{h}}^{i}$ for $0\leqslant i\leqslant5$ to \eqref{202609221247n}, we have
\begin{equation}\label{202609221247nh}
\begin{cases}
\partial_\mathrm{h}^{i}\eta_t=\partial_\mathrm{h}^{i}u&\mbox{ in } \Omega ,\\[0.5mm]
\partial_\mathrm{h}^{i}u_t +\nabla \partial_\mathrm{h}^{i}p- (\mu + \chi)\Delta \partial_\mathrm{h}^{i}u =  \lambda (\bar{B}\cdot \nabla)^2\partial_\mathrm{h}^{i}\eta +2\chi\nabla\times \partial_\mathrm{h}^{i}w+\partial_\mathrm{h}^{i}\mathcal{N}^1&\mbox{ in } \Omega ,\\[0.5mm]
\partial_\mathrm{h}^{i}w_t +4\chi \partial_\mathrm{h}^{i}w=2\chi\nabla\times \partial_\mathrm{h}^{i}u+\partial_\mathrm{h}^{i}\mathcal{N}^2&\mbox{ in } \Omega ,\\[0.5mm]
\mathrm{div}\partial_\mathrm{h}^{i}u=\partial_\mathrm{h}^{i}\mathcal{N}^3 &\mbox{ in } \Omega ,\\[0.5mm]
(\partial_\mathrm{h}^{i}\eta, \partial_\mathrm{h}^{i}u)=(0,0) & \mbox{ on }\partial\Omega.
\end{cases}
\end{equation}
Taking the inner product of \eqref{202609221247nh}$_2$ with $\partial_{\mathrm{h}}^{i}u$ in $L^2$,
integrating by parts over $\Omega$, and then using \eqref{202609221247nh}$_1$ and the boundary condition \eqref{202609221247nh}$_5$, we arrive at
\begin{align}
&\label{202312271537}
\frac{1}{2}\frac{\mathrm{d}}{\mathrm{d}t}\big(\|\partial_\mathrm{h}^{i}u\|_0^2+\lambda\|(\bar{B}\cdot\nabla)\partial_\mathrm{h}^{i}\eta\|_{0}^2\big)
+(\mu+\chi)\|\nabla \partial_\mathrm{h}^{i}u\|_0^2\nonumber\\[1mm]
&=\int \partial_\mathrm{h}^{i}p\partial_\mathrm{h}^{i}\mathcal{N}^3\mathrm{d}y
+2\chi\int\nabla \times \partial_\mathrm{h}^{i}w\cdot \partial_\mathrm{h}^{i}u\mathrm{d}y
+\int\partial_\mathrm{h}^{i}\mathcal{N}^{1}\cdot \partial_\mathrm{h}^{i}u \mathrm{d}y.\qquad
\end{align}
In a similar manner, taking the inner product of \eqref{202609221247nh}$_3$ with $\partial_{\mathrm{h}}^{i}w$ in $L^2$, we obtain
\begin{align}\label{2026b1140}
&\frac{1}{2}\frac{\mathrm{d}}{\mathrm{d}t}\|\partial_\mathrm{h}^{i}w\|_0^2+4\chi\|\partial_\mathrm{h}^{i}w\|_0^2
=2\chi\int\nabla \times \partial_\mathrm{h}^{i}u\cdot \partial_\mathrm{h}^{i}w\mathrm{d}y
+\int\partial_\mathrm{h}^{i}\mathcal{N}^{2}\cdot \partial_\mathrm{h}^{i}w \mathrm{d}y.
\end{align}
Adding \eqref{2026b1140} to \eqref{202312271537} then gives
\begin{align}\label{2023122715371706}
&\frac{1}{2}\frac{\mathrm{d}}{\mathrm{d}t}\big(\|(\partial_\mathrm{h}^{i}u,\partial_\mathrm{h}^{i}w)\|_0^2
+\lambda\|(\bar{B}\cdot\nabla)\partial_\mathrm{h}^{i}\eta\|_{0}^2\big)\nonumber\\[1mm]
&\;+(\mu+\chi)\|\nabla \partial_\mathrm{h}^{i}u\|_0^2+4\chi\|\partial_\mathrm{h}^{i}w\|_0^2
-2\chi\int\nabla \times \partial_\mathrm{h}^{i}w\cdot \partial_\mathrm{h}^{i}u\mathrm{d}y
-2\chi\int\nabla \times \partial_\mathrm{h}^{i}u\cdot \partial_\mathrm{h}^{i}w\mathrm{d}y\nonumber\\[1mm]
&=\int\partial_\mathrm{h}^{i}\mathcal{N}^{1}\cdot \partial_\mathrm{h}^{i}u \mathrm{d}y
+\int\partial_\mathrm{h}^{i}\mathcal{N}^{2}\cdot \partial_\mathrm{h}^{i}w \mathrm{d}y
+\int \partial_\mathrm{h}^{i}p\partial_\mathrm{h}^{i}\mathcal{N}^3\mathrm{d}y :=\sum_{k=1}^3I_{k}.
\end{align}

By virtue of the boundary condition $\partial_\mathrm{h}^{i}u\big|_{\partial\Omega}=0$ and the following two vector identities
$$\nabla \times w\cdot u=\nabla \times u\cdot w+\mathrm{div}(w\times u),\quad\Delta u=\nabla\mathrm{div}u-\nabla\times\nabla\times u,$$
one can readily verify that
\begin{align}\label{2026g}
&\int\nabla \times \partial_{\mathrm{h}}^{i}w\cdot \partial_{\mathrm{h}}^{i}u\mathrm{d}y\nonumber\\
&=\int\nabla \times \partial_{\mathrm{h}}^{i}u\cdot \partial_{\mathrm{h}}^{i}w\mathrm{d}y
+\int\mathrm{div}( \partial_{\mathrm{h}}^{i}w\times \partial_{\mathrm{h}}^{i}u)\mathrm{d}y
=\int\nabla \times \partial_{\mathrm{h}}^{i}u\cdot \partial_{\mathrm{h}}^{i}w\mathrm{d}y\qquad\qquad\quad
\end{align}
and
\begin{align}
&\label{2026hn}
\|\nabla \partial_{\mathrm{h}}^{i}u\|_{0}=\|\nabla\times \partial_{\mathrm{h}}^{i}u\|_{0}+\|\mathrm{div}\partial_{\mathrm{h}}^{i}u\|_0^2,\\[1mm]
&\label{2026h}
\|\nabla \partial_{\mathrm{h}}^{i}u\|_0^2 +4\|\partial_{\mathrm{h}}^{i}w\|_0^2
-4\int\nabla \times \partial_{\mathrm{h}}^{i}u\cdot \partial_{\mathrm{h}}^{i}w\mathrm{d}y
=\|\nabla \times \partial_{\mathrm{h}}^{i}u-2 \partial_{\mathrm{h}}^{i}w\|_0^2
+\|\mathrm{div}\partial_{\mathrm{h}}^{i}u\|_0^2.
\end{align}
Additionally, noting that for any $\epsilon>0$, the following inequality holds:
\begin{align*}
&\|\partial_{\mathrm{h}}^{i}w\|_{0}^2\lesssim\|\nabla \times \partial_{\mathrm{h}}^{i}u-2\partial_{\mathrm{h}}^{i}w\|_{0}^2
+\epsilon \|\nabla\times \partial_{\mathrm{h}}^{i}u\|_{0}.
\end{align*}
Invoking the above four estimates, we can deduce from \eqref{2023122715371706} that
there exists a suitably small constant $c_0> 0$ such that
\begin{align}\label{202604251735}
&\frac{1}{2}\frac{\mathrm{d}}{\mathrm{d}t}\big(\|(\partial_\mathrm{h}^{i}u,\partial_\mathrm{h}^{i}w)\|_0^2
+\lambda\|(\bar{B}\cdot\nabla)\partial_\mathrm{h}^{i}\eta\|_{0}^2\big)
+c_0\|(\nabla\partial_{\mathrm{h}}^{i}u, \partial_{\mathrm{h}}^{i}w)\|_{0}^2\lesssim\sum_{k=1}^3I_{k}.
\end{align}

We now proceed to estimate $I_k$ in sequence.
First, using the H\"older's inequality, the estimate \eqref{06011711jumpv} and integrating by parts in $y_{\mathrm{h}}$ when $i\geqslant2$, we obtain
\begin{align}\label{202604251942}
|I_1|&\lesssim
\begin{cases}
\|\mathcal{N}^1\|_{0}\|u\|_{0}
& \hbox{ for }\; i=0;\\[1mm]
\|\mathcal{N}^1\|_{1,0}\|u\|_{2i-1,0}
& \hbox{ for }\; 1\leqslant i\leqslant3;\\[1mm]
\|\mathcal{N}^1\|_{i-1,0}\|u\|_{i+1,0}
& \hbox{ for }\; 4\leqslant i\leqslant5
\end{cases}
\lesssim\begin{cases}
\sqrt{\mathcal{E}_{H}}{\mathcal{D}}_{L}
& \;\hbox{ for }\; 0\leqslant i\leqslant3;\\[1mm]
\sqrt{\mathcal{E}_{H}}{\mathcal{D}}_{H}
& \;\hbox{ for }\; 4\leqslant i\leqslant5.
\end{cases}
\end{align}
Similarly, by \eqref{11210836}, we can have
\begin{align}\label{202604251950}
|I_2|&\lesssim
\begin{cases}
\|\mathcal{N}^2\|_{0}\|w\|_{0}
& \hbox{for }\; i=0;\\[1mm]
\|\mathcal{N}^2\|_{1,0}\|w\|_{2i-1,0}
& \hbox{for }\; 1\leqslant i\leqslant3;\\[1mm]
\|\mathcal{N}^2\|_{i-1,0}\|w\|_{i+1,0}
& \hbox{for }\;i=4;\\[1mm]
\|\mathcal{N}^2\|_{5,0}\|w\|_{5,0}
& \hbox{for }\; i=5
\end{cases}
\lesssim\begin{cases}
\sqrt{\mathcal{E}_{H}}{\mathcal{D}}_{L}
& \hbox{for }\; 0\leqslant i\leqslant3;\\[1mm]
\sqrt{\mathcal{E}_{H}}{\mathcal{D}}_{H}
& \hbox{for }\; i=4;\\[1mm]
\|(\eta,u)\|_{3}\|(\eta,u)\|_{6}\|w\|_{5,0}
& \hbox{for }\; i=5.
\end{cases}
\end{align}
Finally, we proceed to estimate the term $I_3$.
When $i=0$, we employ \eqref{Akl=0} with $\tilde{\mathcal{A}}$ in place of $\mathcal{A}$, \eqref{202609221247n}$_4$, \eqref{prtislsafdsfsfds},
and perform an integration by parts to obtain
\begin{align}\label{202604252050}
I_3
=-\int p\mathrm{div}_{\tilde{\mathcal{A}}}u\mathrm{d}y
=\int\nabla p\cdot(\tilde{\mathcal{A}}^{\top}u)\mathrm{d}y
\lesssim\|\eta\|_3\|u\|_1\|\nabla p\|_0.
\end{align}
In the remaining case where $1\leqslant i\leqslant5$,  an application of \eqref{11210836} yields
\begin{align}\label{202604252056}
|I_3|&\lesssim\|\nabla p\|_{i-1,0}\|\mathcal{N}^3\|_{i,0}\nonumber\\[1.5mm]
&\lesssim\begin{cases}
\sqrt{\mathcal{E}_{H}}{\mathcal{D}}_{L}
& \;\hbox{ for }\; 1\leqslant i\leqslant3;\\[1mm]
\sqrt{\mathcal{E}_{H}}{\mathcal{D}}_{H}+\|(\eta,u)\|_{3}\|(\eta,u)\|_{6}\|\nabla p\|_{4,0}
& \;\hbox{ for }\; 4\leqslant i\leqslant5.
\end{cases}
\end{align}
Consequently, plugging \eqref{202604251942}--\eqref{202604252056} into \eqref{202604251735} and finally using Young's inequality, we arrive at \eqref{202109231702}.
This completes the proof.
\hfill$\Box$
\end{pf}

\begin{lem}\label{lem:26092401}
Under assumption \eqref{aprpioses} with sufficiently small $\delta$, there holds that
\begin{align}
&\label{202109231700}
\frac{1}{2}\frac{\mathrm{d}}{\mathrm{d}t}
\left(\int\big(2\partial_{\mathrm{h}}^{i}u\cdot\partial_{\mathrm{h}}^{i}\eta\mathrm{d}y
+\int\partial_{\mathrm{h}}^{i}w\cdot\partial_{\mathrm{h}}^{i}\nabla\times\eta\mathrm{d}y
+\mu\|\nabla\partial_\mathrm{h}^{i} \eta\|_0^2+\chi\|\mathrm{div}\partial_\mathrm{h}^{i} \eta\|_0^2\right)
+\lambda\|(\bar{B}\cdot\nabla)\partial_{\mathrm{h}}^i\eta\|_{0}^2\nonumber\\[1.5mm]
&\lesssim\|(\nabla\partial_{\mathrm{h}}^{i} u, \partial_{\mathrm{h}}^{i} w)\|_0^2
+\begin{cases}
\sqrt{\mathcal{E}_{H}}{\mathcal{D}}_{L}
& \;\mathrm{ for }\;\; 0\leqslant i\leqslant3;\\[1mm]
\sqrt{\mathcal{E}_{H}}{\mathcal{D}}_{H}+(\|(\eta,u)\|_3+\|(\nabla w,\nabla p)\|_{1})\|\eta\|_{6}^2
& \;\mathrm{ for }\;\; 4\leqslant i\leqslant5.
\end{cases}
\end{align}
\end{lem}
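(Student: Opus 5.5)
We take the inner product of the momentum equation \eqref{202609221247nh}$_2$ with $\partial_{\mathrm{h}}^{i}\eta$ and the micro-rotation equation \eqref{202609221247nh}$_3$ with $\partial_{\mathrm{h}}^{i}\nabla\times\eta$, and add the two identities. The approach mirrors the proof of Lemma \ref{lem:21092401}, but with $\eta$ as the multiplier, so that the magnetic tension produces the dissipation term.

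\textbf{Step 1: the momentum equation.} Since $\partial_{\mathrm{h}}^{i}\eta|_{\partial\Omega}=0$, integration by parts is allowed in all directions. The magnetic term gives
\[
\lambda\int(\bar{B}\cdot\nabla)^2\partial_{\mathrm{h}}^{i}\eta\cdot\partial_{\mathrm{h}}^{i}\eta\,\mathrm{d}y=-\lambda\|(\bar{B}\cdot\nabla)\partial_{\mathrm{h}}^{i}\eta\|_0^2 .
\]
The time derivative is handled by
\[
\int\partial_{\mathrm{h}}^{i}u_t\cdot\partial_{\mathrm{h}}^{i}\eta\,\mathrm{d}y=\frac{\mathrm{d}}{\mathrm{d}t}\int\partial_{\mathrm{h}}^{i}u\cdot\partial_{\mathrm{h}}^{i}\eta\,\mathrm{d}y-\|\partial_{\mathrm{h}}^{i}u\|_0^2 .
\]
For the viscous term, write $-\Delta=\nabla\times\nabla\times-\nabla\mathrm{div}$ and split $\mu+\chi$. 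The $\mu$-part gives $\frac{\mu}{2}\frac{\mathrm{d}}{\mathrm{d}t}\|\nabla\partial_{\mathrm{h}}^{i}\eta\|_0^2$ directly. The $\chi$-part gives $\chi\int\nabla\times\partial_{\mathrm{h}}^{i}u\cdot\nabla\times\partial_{\mathrm{h}}^{i}\eta\,\mathrm{d}y+\frac{\chi}{2}\frac{\mathrm{d}}{\mathrm{d}t}\|\mathrm{div}\partial_{\mathrm{h}}^{i}\eta\|_0^2$. The curl--curl boundary term vanishes because $\partial_{\mathrm{h}}^{i}\eta=0$ on $\partial\Omega$, using the identity \eqref{2026g}. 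Next, by \eqref{2026g} the coupling term $2\chi\int\nabla\times\partial_{\mathrm{h}}^{i}w\cdot\partial_{\mathrm{h}}^{i}\eta\,\mathrm{d}y$ equals $2\chi\int\partial_{\mathrm{h}}^{i}w\cdot\nabla\times\partial_{\mathrm{h}}^{i}\eta\,\mathrm{d}y$. The pressure term becomes $-\int\partial_{\mathrm{h}}^{i}p\,\mathrm{div}\partial_{\mathrm{h}}^{i}\eta\,\mathrm{d}y$, and Lemma \ref{201811202046} shows it is nonlinear.

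\textbf{Step 2: the micro-rotation equation and cancellation.} Testing \eqref{202609221247nh}$_3$ by $\nabla\times\partial_{\mathrm{h}}^{i}\eta$ gives
\[
\frac{\mathrm{d}}{\mathrm{d}t}\int\partial_{\mathrm{h}}^{i}w\cdot\nabla\times\partial_{\mathrm{h}}^{i}\eta\,\mathrm{d}y-\int\partial_{\mathrm{h}}^{i}w\cdot\nabla\times\partial_{\mathrm{h}}^{i}u\,\mathrm{d}y+4\chi\int\partial_{\mathrm{h}}^{i}w\cdot\nabla\times\partial_{\mathrm{h}}^{i}\eta\,\mathrm{d}y-2\chi\int\nabla\times\partial_{\mathrm{h}}^{i}u\cdot\nabla\times\partial_{\mathrm{h}}^{i}\eta\,\mathrm{d}y=\int\partial_{\mathrm{h}}^{i}\mathcal N^2\cdot\nabla\times\partial_{\mathrm{h}}^{i}\eta\,\mathrm{d}y .
\]
Multiply this identity by $\tfrac12$ and add it to the momentum identity. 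The $\chi\int\nabla\times u\cdot\nabla\times\eta$ terms then cancel exactly, and so do the $2\chi\int w\cdot\nabla\times\eta$ terms; this explains the coefficients in the functional of \eqref{202109231700}. The only remaining linear terms are $\|\partial_{\mathrm{h}}^{i}u\|_0^2$ and $\tfrac12\int\partial_{\mathrm{h}}^{i}w\cdot\nabla\times\partial_{\mathrm{h}}^{i}u\,\mathrm{d}y$. Both are bounded by $\|(\nabla\partial_{\mathrm{h}}^{i}u,\partial_{\mathrm{h}}^{i}w)\|_0^2$, using the Friedrichs inequality \eqref{friedrich} for the first.

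\textbf{Step 3: the nonlinear terms.} Three terms remain:
\[
\int\partial_{\mathrm{h}}^{i}\mathcal N^1\cdot\partial_{\mathrm{h}}^{i}\eta\,\mathrm{d}y,\qquad \tfrac12\int\partial_{\mathrm{h}}^{i}\mathcal N^2\cdot\nabla\times\partial_{\mathrm{h}}^{i}\eta\,\mathrm{d}y,\qquad \int\partial_{\mathrm{h}}^{i}p\,\partial_{\mathrm{h}}^{i}\mathrm{div}\eta\,\mathrm{d}y .
\]
We treat them as in \eqref{202604251942}--\eqref{202604252056}: move horizontal derivatives across by parts and apply \eqref{06011711jumpv}, \eqref{11210836} and \eqref{11202054}.

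For $0\le i\le3$, all norms of $\eta$ that appear are at most $\|\eta\|_5\lesssim\sqrt{\mathcal E_H}$. Together with $\|\eta\|_3$ and $\|(u,\nabla p,w)\|$ in $\mathcal D_L$, this yields $\sqrt{\mathcal E_H}\mathcal D_L$. For the pressure term we write $\int\partial_{\mathrm{h}}^{i}p\,\partial_{\mathrm{h}}^{i}\mathrm{div}\eta\,\mathrm{d}y=-\int\nabla_{\mathrm{h}}\partial_{\mathrm{h}}^{i-1}p\cdot\partial_{\mathrm{h}}^{i+1}\cdots$ when $i\ge1$. When $i=0$ we use the divergence structure $\mathrm{div}\eta=-(r_2^\eta+r_3^\eta)$ of Lemma \ref{201811202046}, giving the bound $\|\nabla p\|_0\|\eta\|_3^2$.

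\textbf{Main obstacle: $i=4,5$.} Here the top norm $\|\eta\|_6$ appears, and it is not included in $\mathcal E_H$ or $\mathcal D_H$. For example, when $i=5$:
\begin{itemize}
\item $\|\partial_{\mathrm{h}}^{5}\mathrm{div}\eta\|_0\lesssim\|\eta\|_3\|\eta\|_6$ multiplies $\|\nabla p\|_{4}$;
\item $\mathcal N^1$ contains $\|\eta\|_{6}$ times $\|u\|_3+\|(\nabla w,\nabla p)\|_1$, via \eqref{06011711jumpv} with $j=4$;
\item $\mathcal N^2$ contributes $\|u\|_3\|\eta\|_6^2$.
\end{itemize}
We isolate all such factors. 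Terms of the form $\|\eta\|_3\|\eta\|_6\cdot\sqrt{\mathcal D_H}$ are split by Young's inequality into $\sqrt{\mathcal E_H}\mathcal D_H+\|\eta\|_3\|\eta\|_6^2$. Whatever is left is bounded by $(\|(\eta,u)\|_3+\|(\nabla w,\nabla p)\|_1)\|\eta\|_6^2$, which is exactly the extra term in \eqref{202109231700}. The delicate part is the bookkeeping: we must check that the highest derivative in each product falls on $\partial_{\mathrm{h}}^{i}\eta$ at most at level $6$, and that the cofactor is a low norm. If a term places seven derivatives on $\eta$, we first shift one horizontal derivative onto the other factor by integrating by parts in $y_{\mathrm{h}}$.
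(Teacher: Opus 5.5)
Your proposal is correct and is essentially the paper's proof. Testing \eqref{202609221247nh}$_3$ with $\tfrac12\nabla\times\partial_{\mathrm{h}}^{i}\eta$ and adding is the same computation as the paper's substitution of the micro-rotation equation into the coupling term, and your curl--div splitting of $-\chi\Delta$ plays the role of the paper's identity $\|\nabla f\|_0^2=\|\nabla\times f\|_0^2+\|\mathrm{div}f\|_0^2$. This yields exactly \eqref{202604260821}, after which the nonlinear terms are handled as you describe, with Young's inequality splitting off the $\|\eta\|_6^2$ contributions for $i=4,5$.

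The one step to state precisely is the pressure term for $i=0$. The identity $\mathrm{div}\eta=-(r_2^\eta+r_3^\eta)$ by itself would need a bound on $p$, which is unavailable since $p$ is only determined up to a constant on the strip. What you need is that $\mathrm{div}\eta=\mathrm{div}\psi$, where $\psi$ is at least quadratic in $(\eta,\nabla\eta)$ and vanishes on $\partial\Omega$. Then $\int p\,\mathrm{div}\eta\,\mathrm{d}y=-\int\psi\cdot\nabla p\,\mathrm{d}y\lesssim\|\eta\|_2^2\|\nabla p\|_0$, which is exactly the paper's argument.
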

\begin{pf}
Taking the inner product of \eqref{202609221247nh}$_2$ with $\partial_{\mathrm{h}}^{i}\eta$ in $L^2$,
integrating by parts over $\Omega$, and then using \eqref{202609221247nh}$_1$ together with \eqref{202609221247nh}$_5$,
we have
\begin{align}
&\label{202604260805}
\frac{1}{2}\frac{\mathrm{d}}{\mathrm{d}t}
\left(2\int\partial_{\mathrm{h}}^{i}u\cdot\partial_{\mathrm{h}}^{i}\eta\mathrm{d}y
+(\mu+\chi)\|\nabla\partial_\mathrm{h}^{i} \eta\|_0^2\right)
+\lambda\|(\bar{B}\cdot\nabla)\partial_{\mathrm{h}}^i\eta\|_{0}^2-2\chi\int\nabla \times \partial_\mathrm{h}^{i}w\cdot \partial_\mathrm{h}^{i}\eta\mathrm{d}y\nonumber\\[1mm]
&=\|\partial_{\mathrm{h}}^{i} u\|_0^2+\int\partial_\mathrm{h}^{i}\mathcal{N}^{1}\cdot \partial_\mathrm{h}^{i}\eta \mathrm{d}y
+\int \partial_\mathrm{h}^{i}p\mathrm{div} \partial_\mathrm{h}^{i}\eta\mathrm{d}y.
\end{align}
By virtue of \eqref{202609221247nh}$_3$ and \eqref{202609221247nh}$_1$, we rewrite the term $-2\chi\int\nabla \times \partial_\mathrm{h}^{i}w\cdot \partial_\mathrm{h}^{i}\eta\mathrm{d}y$ as
\begin{align*}
&-2\chi\int\nabla \times \partial_\mathrm{h}^{i}w\cdot \partial_\mathrm{h}^{i}\eta\mathrm{d}y
\nonumber\\[1mm]&
=-2\chi\int\partial_\mathrm{h}^{i}w\cdot\nabla \times  \partial_\mathrm{h}^{i}\eta\mathrm{d}y
=\frac{1}{2}\int\partial_\mathrm{h}^{i}\big(w_t-2\chi\nabla \times u-\mathcal{N}^2\big)\cdot\nabla \times  \partial_\mathrm{h}^{i}\eta\mathrm{d}y\nonumber\\[1.5mm]
&=\frac{1}{2}\frac{\mathrm{d}}{\mathrm{d}t}\left(\int \partial_\mathrm{h}^{i}w\cdot\nabla \times  \partial_\mathrm{h}^{i}\eta\mathrm{d}y
-\chi\|\nabla \times  \partial_\mathrm{h}^{i}\eta\|_{0}^2\right)
-\frac{1}{2}\int \left(\partial_\mathrm{h}^{i}w\cdot\nabla \times  \partial_\mathrm{h}^{i}u+\partial_\mathrm{h}^{i}\mathcal{N}^2\cdot\nabla \times  \partial_\mathrm{h}^{i}\eta\right)\mathrm{d}y.
\end{align*}
Similar to \eqref{2026hn}, it also holds that
\begin{align*}
&
\|\nabla \partial_{\mathrm{h}}^{i}\eta\|_{0}=\|\nabla\times \partial_{\mathrm{h}}^{i}\eta\|_{0}+\|\mathrm{div}\partial_{\mathrm{h}}^{i}\eta\|_0^2.
\end{align*}
Inserting the above two identities into \eqref{202604260805} yields
\begin{align}
&\label{202604260821}
\frac{1}{2}\frac{\mathrm{d}}{\mathrm{d}t}
\left(2\int\partial_{\mathrm{h}}^{i}u\cdot\partial_{\mathrm{h}}^{i}\eta\mathrm{d}y
+\int\partial_{\mathrm{h}}^{i}w\cdot\partial_{\mathrm{h}}^{i}\nabla\times\eta\mathrm{d}y
+\mu\|\nabla\partial_\mathrm{h}^{i} \eta\|_0^2+\chi\|\mathrm{div}\partial_\mathrm{h}^{i} \eta\|_0^2\right)
+\lambda\|(\bar{B}\cdot\nabla)\partial_{\mathrm{h}}^i\eta\|_{0}^2\nonumber\\[1mm]
&=\|\partial_{\mathrm{h}}^{i} u\|_0^2
+\frac{1}{2}\int \partial_\mathrm{h}^{i}w\cdot\nabla \times  \partial_\mathrm{h}^{i}u\mathrm{d}y
+\int\partial_\mathrm{h}^{i}\mathcal{N}^{1}\cdot \partial_\mathrm{h}^{i}\eta \mathrm{d}y
+\frac{1}{2}\int\partial_\mathrm{h}^{i}\mathcal{N}^2\cdot\nabla \times  \partial_\mathrm{h}^{i}\eta\mathrm{d}y\nonumber\\[1mm]
&\quad\;
+\int \partial_\mathrm{h}^{i}p\mathrm{div} \partial_\mathrm{h}^{i}\eta\mathrm{d}y
:=\|\partial_{\mathrm{h}}^{i} u\|_0^2
+\frac{1}{2}\int \partial_\mathrm{h}^{i}w\cdot\nabla \times  \partial_\mathrm{h}^{i}u\mathrm{d}y+\sum_{k=4}^6I_{k}.
\end{align}

Following the same argument as that used in \eqref{202604251942}--\eqref{202604251950}, we can estimate that
\begin{align}\label{202604251942nm}
|I_4|
\lesssim\begin{cases}
\sqrt{\mathcal{E}_{H}}{\mathcal{D}}_{L}
& \;\hbox{ for }\; 0\leqslant i\leqslant3;\\[1mm]
\sqrt{\mathcal{E}_{H}}{\mathcal{D}}_{H}+(\|u\|_3+\|(\nabla w,\nabla p)\|_{1})\|\eta\|_{6}^2
& \;\hbox{ for }\; 4\leqslant i\leqslant5.
\end{cases}\qquad\qquad
\end{align}
and
\begin{align}\label{202604251950nm}
|I_5|
&\lesssim
\begin{cases}
\|\mathcal{N}^2\|_{0}\|\nabla \times  \eta\|_{0}
& \hbox{for }\; i=0;\\[1mm]
\|\mathcal{N}^2\|_{1,0}\|\nabla \times  \eta\|_{2i-1,0}
& \hbox{for }\; 1\leqslant i\leqslant3;\\[1mm]
\|\mathcal{N}^2\|_{i-1,0}\|\nabla \times  \eta\|_{i+1,0}
& \hbox{for }\; i=4;\\[1mm]
\|\mathcal{N}^2\|_{5,0}\|\nabla \times  \eta\|_{5,0}
& \hbox{for }\; i=5
\end{cases}
\lesssim\begin{cases}
\sqrt{\mathcal{E}_{H}}{\mathcal{D}}_{L}
& \hbox{for }\; 0\leqslant i\leqslant3;\\[1mm]
\sqrt{\mathcal{E}_{H}}{\mathcal{D}}_{H}
& \hbox{for }\; i=4;\\[1mm]
\|(\eta,u)\|_{3}\|\eta\|_{6}\|(\eta,u)\|_{6}
& \hbox{for }\; i=5.
\end{cases}
\end{align}

It remains to estimate the term $I_6$.
First, it is known that (see \cite[(4.57)]{WYTIVNMI})
$$\int q\mathrm{div}\eta\mathrm{d}y=-\int\psi\cdot\nabla q\mathrm{d}y
\lesssim\|\eta\|_2^2\|\nabla q\|_0,$$
where
$$  \psi:=-\left(\begin{array}{c}
          \eta_1(\partial_2\eta_2+\partial_3\eta_3 )-
           \eta_1(\partial_2\eta_3 \partial_3\eta_2
- \partial_2\eta_2\partial_3\eta_3) \\[1mm]
         \eta_2\partial_3\eta_3-\eta_1\partial_1\eta_2-
        \eta_1( \partial_1\eta_2 \partial_3\eta_3
-\partial_1\eta_3\partial_3\eta_2)\\[1mm]
         -\eta_1\partial_1\eta_3
-\eta_2\partial_2\eta_3-\eta_1(\partial_1\eta_3 \partial_2\eta_2
-\partial_1\eta_2\partial_2\eta_3)
        \end{array}\right)
        \quad\mbox{and}\;\;\mathrm{div}\eta=\mathrm{div}\psi.
$$
Next, utilizing \eqref{11202054} and integrating by parts in $y_{\mathrm{h}}$ when $i\geqslant3$, we can have the bound:
\begin{align*}
|I_6|
&\lesssim
\begin{cases}
\|\nabla p\|_{3}\|\mathrm{div}\eta\|_{2}
& \hbox{for }\; 1\leqslant i\leqslant3;\\[1mm]
\|\nabla p\|_{i-1}\|\mathrm{div}\eta\|_{i}
& \hbox{for }\; i=4;\\[1mm]
\|\nabla p\|_{4}\|\mathrm{div}\eta\|_{5}
& \hbox{for }\; i=5
\end{cases}
\lesssim\begin{cases}
\sqrt{\mathcal{E}_{H}}{\mathcal{D}}_{L}
& \hbox{for }\; 1\leqslant i\leqslant3;\\[1mm]
\sqrt{\mathcal{E}_{H}}{\mathcal{D}}_{H}
& \hbox{for }\; i=4;\\[1mm]
\|\eta\|_{3}\|\eta\|_{6}\|\nabla p\|_{4}
& \hbox{for }\; i=5.
\end{cases}
\end{align*}
Therefore,  we conclude that
\begin{align}\label{202604260901}
|I_6|
\lesssim\begin{cases}
\sqrt{\mathcal{E}_{H}}{\mathcal{D}}_{L}
& \hbox{for }\; 0\leqslant i\leqslant3;\\[1mm]
\sqrt{\mathcal{E}_{H}}{\mathcal{D}}_{H}+\|\eta\|_{3}\|\eta\|_{6}\|\nabla p\|_{4}
& \hbox{for }\; 4\leqslant i\leqslant5.
\end{cases}
\end{align}
Consequently, plugging \eqref{202604251942nm}--\eqref{202604260901} into \eqref{202604260821} and invoking \eqref{friedrich} applied to $\partial_{\mathrm{h}}^{i} u$, then  \eqref{202109231700} follows.
This completes the proof.
\hfill$\Box$
\end{pf}

\subsection{Energy estimates for the temporal derivatives}

In this subsection, we derive the energy estimates for the temporal derivatives of $(u,w)$.

\begin{lem}\label{badiseqinM}
Under assumption \eqref{aprpioses} with sufficiently small $\delta$, there holds that
\begin{align}
&\label{11300905}
\frac{\mathrm{d}}{\mathrm{d}t}\left(\|(\partial_{t}^ju,\partial_{t}^jw)\|_{0}^2+\lambda\|(\bar{B}\cdot\nabla)\partial_{t}^{j-1}u\|_{0}^2\right)
+c\|(\nabla_{\mathcal{A}}\partial_{t}^ju,\partial_{t}^jw)\|_0^2\nonumber\\[1mm]
&\lesssim
\begin{cases}
\sqrt{\mathcal{E}_H} \mathcal{D}_L& \;\mathrm{  for }\;\;  j=1; \\[1mm]
\sqrt{\mathcal{E}_H} \mathcal{D}_H& \;\mathrm{ for }\;\; j=2.
\end{cases}\\[1mm]
&\label{11300910}
\frac{1}{2}\frac{\mathrm{d}}{\mathrm{d}t}
\left((\mu+\chi)\|\nabla_{\mathcal{A}}\partial_t^2u\|_0^2+4\chi\|\partial_{t}^2w\|_0^2-4\chi\int\nabla_{\mathcal{A}}\times \partial_{t}^2u\cdot\partial_{t}^2w\mathrm{d}y\right)
+c\|(\partial_t^3u,\partial_t^3w)\|^2_{0}\nonumber\\[1mm]
&\lesssim \|u_{t}\|_{2}^2+\sqrt{\mathcal{E}_H}  \mathcal{D}_H.
\end{align}
\end{lem}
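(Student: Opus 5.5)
For \eqref{11300905} I will work with the temporally differentiated system \eqref{202609221247temporal} rather than the linearized form \eqref{202609221247n}. The reason is that the $\mathcal A$-structure keeps the pressure interaction harmless.

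\textbf{Step 1: the identity for \eqref{11300905}.} Take the $L^2$ inner product of \eqref{202609221247temporal}$_1$ with $\partial_t^ju$ and of \eqref{202609221247temporal}$_2$ with $\partial_t^jw$, for $j=1,2$. The boundary condition $\partial_t^{j-1}u=\partial_t^ju=0$ on $\partial\Omega$ allows integration by parts. Together with $\partial_j\mathcal A_{ij}=0$ from \eqref{Akl=0}, this gives
\begin{align*}
-\int\Delta_{\mathcal A}\partial_t^ju\cdot\partial_t^ju\,\mathrm dy=\|\nabla_{\mathcal A}\partial_t^ju\|_0^2,\qquad
\int\nabla_{\mathcal A}\partial_t^jp\cdot\partial_t^ju\,\mathrm dy=-\int\partial_t^jp\,\mathrm{div}D^{t,j}_u\,\mathrm dy=\int\nabla\partial_t^jp\cdot D^{t,j}_u\,\mathrm dy .
\end{align*}
The magnetic term is $\lambda\int(\bar B\cdot\nabla)^2\partial_t^{j-1}u\cdot\partial_t^ju=-\frac{\lambda}{2}\frac{\mathrm d}{\mathrm dt}\|(\bar B\cdot\nabla)\partial_t^{j-1}u\|_0^2$. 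For the curl terms, the identity $\nabla_{\mathcal A}\times w\cdot u=\nabla_{\mathcal A}\times u\cdot w+\mathrm{div}_{\mathcal A}(w\times u)$, the relation \eqref{Akl=0} and the vanishing trace of $\partial_t^ju$ combine the two cross terms into $-4\chi\int\nabla_{\mathcal A}\times\partial_t^ju\cdot\partial_t^jw$.

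\textbf{Step 2: coercivity for \eqref{11300905}.} I mimic \eqref{2026h}: $|\nabla_{\mathcal A}\times v|^2+|\mathrm{div}_{\mathcal A}v|^2$ differs from $|\nabla_{\mathcal A}v|^2$ only by a divergence-type term that vanishes for $v\in H^1_0$, up to perturbations of size $\|\eta\|_3$. Combining this with \eqref{20190614fdsa1957} and the small-$\epsilon$ splitting used before \eqref{202604251735}, I obtain dissipation $c\|(\nabla_{\mathcal A}\partial_t^ju,\partial_t^jw)\|_0^2$. The extra $\|\mathrm{div}_{\mathcal A}\partial_t^ju\|_0^2=\|\mathrm{div}D^{t,j}_u\|_0^2$ is cubic.

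\textbf{Step 3: the nonlinear terms in \eqref{11300905}.} The remaining terms are $\int\mathcal N^{t,j}\cdot\partial_t^ju$, $\int\mathcal M^{t,j}\cdot\partial_t^jw$ and $\int\nabla\partial_t^jp\cdot D^{t,j}_u$. For $j=1$, Lemma \ref{lem:0933} \eqref{202108082010} bounds each by $\|u\|_3(\|u\|_3+\|(\nabla p,\nabla w)\|_1)(\|u_t\|_0+\|w_t\|_0+\|\nabla p_t\|_0)\lesssim\sqrt{\mathcal E_H}\mathcal D_L$. Here I use that $\|w_t\|_0$ and $\|u_t\|_1$ are in $\mathcal D_L$, since $w_t=-4\chi w+2\chi\nabla_{\mathcal A}\times u$ is controlled by $\|w\|_0+\|u\|_1$. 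For $j=2$ I use \eqref{badiseqin3nes1n0}, whose prefactor is bounded by $\sqrt{\mathcal E_H}$ and whose second factor, together with $\|(u_{tt},w_{tt},\nabla p_{tt})\|_0$, is bounded by $\sqrt{\mathcal D_H}$. This step closes by Young's inequality.

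\textbf{Step 4: the estimate \eqref{11300910}.} Test \eqref{202609221247temporal}$_1$ with $j=2$ against $\partial_t^3u$ and \eqref{202609221247temporal}$_2$ against $\partial_t^3w$. The good terms $\|\partial_t^3u\|_0^2+\|\partial_t^3w\|_0^2$ appear.

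Then $(\mu+\chi)\int\nabla_{\mathcal A}\partial_t^2u:\nabla_{\mathcal A}\partial_t^3u$ produces $\frac{\mu+\chi}{2}\frac{\mathrm d}{\mathrm dt}\|\nabla_{\mathcal A}\partial_t^2u\|_0^2$, with commutator $\int\nabla_{\partial_t\mathcal A}\partial_t^2u:\nabla_{\mathcal A}\partial_t^2u\lesssim\|u\|_3\|u_{tt}\|_1^2$. Likewise $4\chi\int\partial_t^2w\cdot\partial_t^3w$ gives $2\chi\frac{\mathrm d}{\mathrm dt}\|\partial_t^2w\|_0^2$.

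The cross terms are $-2\chi\int\nabla_{\mathcal A}\times\partial_t^2w\cdot\partial_t^3u-2\chi\int\nabla_{\mathcal A}\times\partial_t^2u\cdot\partial_t^3w$. After moving the curl, they equal $-2\chi\frac{\mathrm d}{\mathrm dt}\int\nabla_{\mathcal A}\times\partial_t^2u\cdot\partial_t^2w$ plus a commutator in $\partial_t\mathcal A$. This matches the energy in \eqref{11300910}.

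The magnetic term $\lambda\int(\bar B\cdot\nabla)^2u_t\cdot\partial_t^3u$ is bounded by $\|u_t\|_2\|\partial_t^3u\|_0$ and absorbed; this is the source of $\|u_t\|_2^2$.

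\textbf{The main obstacle.} The pressure term $\int\nabla_{\mathcal A}\partial_t^2p\cdot\partial_t^3u$ is the hard part, because $\partial_t^3u$ is not $\mathcal A$-divergence free. I would integrate by parts to get $-\int\partial_t^2p\,\mathrm{div}_{\mathcal A}\partial_t^3u$. Then I write $\mathrm{div}_{\mathcal A}\partial_t^3u=\mathrm{div}D^{t,3}_u$ (time-differentiating the constraint) and integrate back to $\int\nabla\partial_t^2p\cdot D^{t,3}_u$. This is bounded via \eqref{badiseqin3nes1n0} by $\sqrt{\mathcal E_H}\mathcal D_H$, with $\|\nabla\partial_t^2p\|_0\le\sqrt{\mathcal D_H}$. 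The term $\int\mathcal N^{t,2}\cdot\partial_t^3u$ is handled by Cauchy–Schwarz, absorption and \eqref{badiseqin3nes1n0}; $\mathcal M^{t,2}$ is treated the same way.
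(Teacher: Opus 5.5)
Your proposal is correct and follows the paper's proof essentially step for step. You test \eqref{202609221247temporal} against $(\partial_t^ju,\partial_t^jw)$ and then against $(\partial_t^3u,\partial_t^3w)$, symmetrize the curl terms via \eqref{Akl=0}, convert the pressure pairings into $\int\nabla\partial_t^jp\cdot D^{t,j}_u$ and $\int\nabla\partial_t^2p\cdot D^{t,3}_u$, and close with Lemma \ref{lem:0933}.

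One bookkeeping correction for $j=1$: $\|\nabla p_t\|_0$ is not part of $\mathcal{D}_L$, so in your triple product the factor $\|u_t\|_0+\|w_t\|_0+\|\nabla p_t\|_0$ must be the one bounded by $\sqrt{\mathcal{E}_H}$. The other two factors, $\|u\|_3$ and $\|u\|_3+\|(\nabla p,\nabla w)\|_1$, are then each bounded by $\sqrt{\mathcal{D}_L}$; this is how the paper's bound $\|\nabla p_t\|_0\|D^{t,1}_u\|_0\lesssim\sqrt{\mathcal{E}_H}\mathcal{D}_L$ works.
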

\begin{pf}
Taking the inner product of \eqref{202609221247temporal}$_1$ with $\partial_{t}^ju$ in $L^2$,
integrating by parts over $\Omega$,
and using \eqref{202609221247temporal}$_3$--\eqref{202609221247temporal}$_4$, we obtain
\begin{align}\label{11301525}
&\frac{1}{2}\frac{\mathrm{d}}{\mathrm{d}t}\left(\|\partial_{t}^ju\|_{0}^2+\lambda\|(\bar{B}\cdot\nabla)\partial_{t}^{j-1}u\|_{0}^2\right)
+(\mu+\chi)\|\nabla_{\mathcal{A}}\partial_{t}^ju\|_0^2\nonumber\\[1mm]
&=2\chi\int\nabla_{\mathcal{A}}\times \partial_{t}^jw\cdot\partial_{t}^ju\mathrm{d}y
-\int\nabla\partial_{t}^jp\cdot D^{t,j}_u\mathrm{d}y+\int\mathcal{N}^{t,j}\cdot\partial_{t}^ju \mathrm{d}y.\quad\;
\end{align}
Similarly, taking the inner product of \eqref{202609221247temporal}$_2$ with $\partial_{t}^jw$ in $L^2$ yields
\begin{align}\label{11301535}
&\frac{1}{2}\frac{\mathrm{d}}{\mathrm{d}t}\|\partial_{t}^jw\|_{0}^2+4\chi\|\partial_{t}^jw\|_0^2
=2\chi\int\nabla_{\mathcal{A}}\times \partial_{t}^ju\cdot\partial_{t}^jw\mathrm{d}y+\int\mathcal{M}^{t,j}\cdot\partial_{t}^jw \mathrm{d}y.
\end{align}

Noting that $\epsilon_{kji}=-\epsilon_{ijk}$, we have
\begin{align}\label{11301535nm}
\epsilon_{ijk}\mathcal{A}_{jl}(\partial_{l}X_{k})Y_{i}
=\epsilon_{ijk}\mathcal{A}_{jl}\partial_{l}(X_{k}Y_{i})+\epsilon_{kji}\mathcal{A}_{jl}(\partial_{l}Y_{i})X_{k}.
\end{align}
Invoking \eqref{Akl=0}, the boundary condition $\partial_{t}^ju|_{\partial\Omega}=0$, and \eqref{11301535nm}, we readily deduce that
\begin{align}\label{202604271627}
\int\nabla_{\mathcal{A}}\times \partial_{t}^jw\cdot\partial_{t}^ju\mathrm{d}y=\int\nabla_{\mathcal{A}}\times \partial_{t}^ju\cdot\partial_{t}^jw\mathrm{d}y,\quad\mbox{for}\;1\leqslant j\leqslant3.
\end{align}
Consequently, combining \eqref{11301525} with \eqref{11301535} gives rise to
\begin{align}\label{113004251525}
&\frac{1}{2}\frac{\mathrm{d}}{\mathrm{d}t}\left(\|(\partial_{t}^ju,\partial_{t}^jw)\|_{0}^2+\lambda\|(\bar{B}\cdot\nabla)\partial_{t}^{j-1}u\|_{0}^2\right)
+(\mu+\chi)\|\nabla_{\mathcal{A}}\partial_{t}^ju\|_0^2+4\chi\|\partial_{t}^jw\|_0^2\nonumber\\[1mm]
&=4\chi\int\nabla_{\mathcal{A}}\times \partial_{t}^ju\cdot\partial_{t}^jw\mathrm{d}y
+\int\mathcal{N}^{t,j}\cdot\partial_{t}^ju \mathrm{d}y+\int\mathcal{M}^{t,j}\cdot\partial_{t}^jw \mathrm{d}y-\int\nabla\partial_{t}^jp\cdot D^{t,j}_u\mathrm{d}y.
\end{align}

Moreover, noting also that $$\Delta_{\mathcal{A}}\partial_{t}^ju=\nabla_{\mathcal{A}}\mathrm{div}_{\mathcal{A}}\partial_{t}^ju-\nabla_{\mathcal{A}}\times\nabla_{\mathcal{A}}\times\partial_{t}^ju
\quad\mbox{for}\;1\leqslant j\leqslant2,$$
and using the boundary condition $\partial_{t}^ju|_{\partial\Omega}=0$, \eqref{11301535nm},
as well as \eqref{Akl=0}, it readily follows that
\begin{align*}
\|\nabla_{\mathcal{A}}\times\partial_{t}^ju\|_{0}^2\leqslant\|\nabla_{\mathcal{A}}\partial_{t}^ju\|_{0}^2,
\end{align*}
which, together with Young's inequality, implies
\begin{align}\label{202604271630}
\|(\nabla_{\mathcal{A}}\partial_{t}^ju,\partial_{t}^jw)\|_0^2\lesssim
(\mu+\chi)\|\nabla_{\mathcal{A}}\partial_{t}^ju\|_0^2+4\chi\|\partial_{t}^jw\|_0^2-4\chi\int\nabla_{\mathcal{A}}\times \partial_{t}^ju\cdot\partial_{t}^jw\mathrm{d}y.
\end{align}
Substituting this into \eqref{113004251525}, we can refine to be
\begin{align}\label{04251135}
&\frac{1}{2}\frac{\mathrm{d}}{\mathrm{d}t}\left(\|(\partial_{t}^ju,\partial_{t}^jw)\|_{0}^2+\lambda\|\partial_{\bar{B}}\partial_{t}^{j-1}u\|_{0}^2\right)
+c\|(\nabla_{\mathcal{A}}\partial_{t}^ju,\partial_{t}^jw)\|_0^2\nonumber\\[1mm]
&\lesssim\int\mathcal{N}^{t,j}\cdot\partial_{t}^ju \mathrm{d}y+\int\mathcal{M}^{t,j}\cdot\partial_{t}^jw \mathrm{d}y-\int\nabla\partial_{t}^jp\cdot D^{t,j}_u\mathrm{d}y.
\end{align}
Next we proceed to estimate the integrals on the right hand side of \eqref{04251135}.
Indeed, by virtue of H\"older's inequality, \eqref{202108082010} and \eqref{badiseqin3nes1n0}, we can estimate that
\begin{align*}
&\int\mathcal{N}^{t,j}\cdot\partial_{t}^ju \mathrm{d}y+\int\mathcal{M}^{t,j}\cdot\partial_{t}^jw \mathrm{d}y
-\int\nabla\partial_{t}^jp\cdot D^{t,j}_u\mathrm{d}y\nonumber\\[1mm]
&\lesssim\|(\mathcal{N}^{t,j},\mathcal{M}^{t,j})\|_{0}\|(\partial_{t}^ju,\partial_{t}^jw)\|_{0}
+\|\nabla\partial_{t}^jp\|_{0}\|D^{t,j}_u\|_{0}
\lesssim
\begin{cases}
\sqrt{\mathcal{E}_H} \mathcal{D}_L& \;\hbox{  for }\;  j=1; \\[1mm]
\sqrt{\mathcal{E}_H} \mathcal{D}_H& \;\hbox{ for }\; j=2.
\end{cases}
\end{align*}
Inserting it  into \eqref{04251135}, then \eqref{11300905} follows.

We now turn to establish \eqref{11300910}.
Taking the inner product of \eqref{202609221247temporal}$_1$ with $j=2$ against $\partial_{t}^3u$ in $L^2$,
integrating by parts over $\Omega$,
and using \eqref{202609221247temporal}$_3$--\eqref{202609221247temporal}$_4$, we obtian
\begin{align}\label{113015250427}
&\frac{1}{2}\frac{\mathrm{d}}{\mathrm{d}t}(\mu+\chi)\|\nabla_{\mathcal{A}}\partial_{t}^2u\|_0^2+\|\partial_{t}^3u\|_{0}^2\nonumber\\[1mm]
&=\lambda\int(\bar{B}\cdot\nabla)^2u_t\cdot\partial_{t}^3u\mathrm{d}y
+2\chi\int\nabla_{\mathcal{A}}\times \partial_{t}^2w\cdot\partial_{t}^3u\mathrm{d}y\nonumber\\[1mm]
&\quad
+\int\mathcal{N}^{t,2}\cdot\partial_{t}^2u \mathrm{d}y-\int\nabla\partial_{t}^2p\cdot D^{t,3}_u\mathrm{d}y
+(\mu+\chi)\int\nabla_{\mathcal{A}_t}\partial_{t}^2u\cdot\partial_{t}^3u\mathrm{d}y.
\end{align}
Similarly, taking the inner product of \eqref{202609221247temporal}$_2$ with $j=2$ against $\partial_{t}^3w$ in $L^2$ gives
\begin{align}\label{113015350427}
&2\chi\frac{\mathrm{d}}{\mathrm{d}t}\|\partial_{t}^2w\|_{0}^2+\|\partial_{t}^3w\|_0^2
=2\chi\int\nabla_{\mathcal{A}}\times \partial_{t}^2u\cdot\partial_{t}^3w\mathrm{d}y+\int\mathcal{M}^{t,2}\cdot\partial_{t}^3w \mathrm{d}y.
\end{align}
Furthermore, in light of \eqref{202604271627}, we observe that
\begin{align}\label{202604271647}
&2\chi\int\nabla_{\mathcal{A}}\times \partial_{t}^2w\cdot\partial_{t}^3u\mathrm{d}y
=2\chi\int\nabla_{\mathcal{A}}\times \partial_{t}^3u\cdot\partial_{t}^2w\mathrm{d}y\nonumber\\[1mm]
&=2\chi\frac{\mathrm{d}}{\mathrm{d}t}\int\nabla_{\mathcal{A}}\times \partial_{t}^2u\cdot\partial_{t}^2w\mathrm{d}y
-2\chi\int\nabla_{\mathcal{A}}\times \partial_{t}^2u\cdot\partial_{t}^3w\mathrm{d}y
-2\chi\int\nabla_{\mathcal{A}_t}\times \partial_{t}^2u\cdot\partial_{t}^2w\mathrm{d}y.
\end{align}
Combining \eqref{113015250427}--\eqref{202604271647}, it follows that
\begin{align}\label{113009101653}
&\frac{1}{2}\frac{\mathrm{d}}{\mathrm{d}t}
\left((\mu+\chi)\|\nabla_{\mathcal{A}}\partial_t^2u\|_0^2+4\chi\|\partial_{t}^2w\|_0^2-4\chi\int\nabla_{\mathcal{A}}\times \partial_{t}^2u\cdot\partial_{t}^2w\mathrm{d}y\right)
+\|(\partial_{t}^3u,\partial_t^3w)\|_{0}^2\nonumber\\[1mm]
&=\lambda\int(\bar{B}\cdot\nabla)^2u_t\cdot\partial_{t}^3u\mathrm{d}y
+\int\mathcal{N}^{t,2}\cdot\partial_{t}^2u \mathrm{d}y+\int\mathcal{M}^{t,2}\cdot\partial_{t}^3w \mathrm{d}y
-\int\nabla\partial_{t}^2p\cdot D^{t,3}_u\mathrm{d}y\nonumber\\[1mm]
&\quad
+(\mu+\chi)\int\nabla_{\mathcal{A}_t}\partial_{t}^2u\cdot\partial_{t}^3u\mathrm{d}y-2\chi\int\nabla_{\mathcal{A}_t}\times \partial_{t}^2u\cdot\partial_{t}^2w\mathrm{d}y.
\end{align}
By virtue of H\"older's inequality, \eqref{prtislsafdsfs} and \eqref{badiseqin3nes1n0}, we can estimate that
\begin{align*}
&\lambda\int(\bar{B}\cdot\nabla)^2u_t\cdot\partial_{t}^3u\mathrm{d}y\lesssim\|u_{t}\|_2\|\partial_{t}^3u\|_0,\nonumber\\[1mm]
&\int\mathcal{N}^{t,2}\cdot\partial_{t}^2u \mathrm{d}y+
\int\mathcal{M}^{t,2}\cdot\partial_{t}^3w \mathrm{d}y
-\int\nabla\partial_{t}^2p\cdot D^{t,3}_u\mathrm{d}y\lesssim\sqrt{\mathcal{E}_H}  \mathcal{D}_H,\qquad\qquad\qquad\nonumber\\[1mm]
&(\mu+\chi)\int\nabla_{\mathcal{A}_t}\partial_{t}^2u\cdot\partial_{t}^3u\mathrm{d}y
-2\chi\int\nabla_{\mathcal{A}_t}\times \partial_{t}^2u\cdot\partial_{t}^2w\mathrm{d}y
\lesssim\sqrt{\mathcal{E}_H}  \mathcal{D}_H.
\end{align*}
Plugging the above three estimates into \eqref{113009101653} and using Young's inequality, yields \eqref{11300910}.
 This completes the proof.
\hfill$\Box$
\end{pf}

\subsection{Energy estimates on the normal derivatives}
In this subsection, we establish the estimates for the normal derivatives of $(\eta, u, w, p)$.
We first use the regularity theory of the Stokes problem to derive further estimates for $(\eta,u,p)$ as in \cite{WYJTIKCT,JFJSJMFMOSERT}.
Subsequently, we utilize the ODE structure of the linear perturbed system to recover the estimates for $w$, which simultaneously yields further estimates for
$(\bar{B}\cdot \nabla)^2\eta$ and $u$.

Denote $\varphi:=u+\lambda\bar{B}_3^2\eta/(\mu+\chi)$, we deduce from \eqref{202609221247} that $\varphi$ satisfies the following Stokes problem:
\begin{equation}\label{n0101nn928nenms}
\begin{cases}
-(\mu+\chi)\Delta \varphi+\nabla p= F\qquad &\mbox{in } \Omega ,\\[0.5mm]
\mathrm{div}\varphi =G &\mbox{in } \Omega ,\\[0.5mm]
\varphi=0 &\mbox{on } \partial\Omega,
\end{cases}
\end{equation}
where $F$ and $G$ are given by
\begin{align*}
&F:=\lambda\big((\bar{B}_{\mathrm{h}}\cdot \nabla_{\mathrm{h}})^2+2(\bar{B}_3\partial_3)(\bar{B}_{\mathrm{h}}\cdot \nabla_{\mathrm{h}})\big)\eta
-\lambda\bar{B}_3^2\Delta_{\mathrm{h}}\eta-u_t +2\chi \nabla \times w+\mathcal{N}^1,\\[1mm]
&G:=\lambda\bar{B}_3^2\mathrm{div}\eta/(\mu+\chi)+\mathcal{N}^3.
\end{align*}
Applying the horizontal derivatives $\partial_\mathrm{h}^{k}$ to the problem \eqref{n0101nn928nenms}, and invoking
the regularity theory of the Stokes problem (see Lemma \ref{10220835}) to the resulting problem, we deduce that, for $0\leqslant k\leqslant i$,
\begin{align}\label{11292247}
&\|\varphi \|_{k,i-k+2}^2+\|\nabla p \|_{k,i-k}^2\nonumber\\[1mm]
&\lesssim\|\varphi\|_{k,0}^2+\|F\|_{k,i-k}^2+\|G\|_{k,i-k+1}^2\nonumber\\[1mm]
&\lesssim\|\eta\|_{k+1,i-k+1}^2+\|(\eta,u)\|_{k,0}^2+\|(u_t,\nabla\times w)\|_{i}^2
+\|\mathcal{N}^1\|_{i}^2+\|(\mathrm{div}\eta,\mathcal{N}^3)\|_{i+1}^2.
\end{align}

We now use the above estimate to establish the estimates for $(\eta,u,p)$.

\begin{lem}\label{lem:11292030es}
Under assumption \eqref{aprpioses} with sufficiently small $\delta$, it holds that
\begin{align}
&\label{202604271129}
\frac{\mathrm{d}}{\mathrm{d}t}\overline{\|\eta\|}_{i+2}^2+\|(\eta,u)\|_{i+2}^2+\|\nabla p\|_{i}^2
\lesssim
\begin{cases}
\|(\eta,u)\|_{\underline{2},1}^2+\|(u_t,\nabla\times w)\|_{1}^2 & \;\;\mathrm{for }\;\;  i=1; \\[1mm]
\|(\eta,u)\|_{\underline{4},1}^2+\|(u_t,\nabla\times w)\|_{3}^2 & \;\;\mathrm{for }\;\; i=3; \\[1mm]
\mathcal{E}_H+\mathcal{D}_H & \;\;\mathrm{for }\;\; i=4,
\end{cases}
\end{align}
\end{lem}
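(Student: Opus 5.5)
The estimate \eqref{202604271129} follows from the Stokes estimate \eqref{11292247} together with the structure $\varphi=u+\lambda\bar{B}_3^2\eta/(\mu+\chi)$ and $\eta_t=u$. The functional $\overline{\|\eta\|}_{i+2}^2$ is a weighted sum of $\|\eta\|_{k,i-k+2}^2$ over $0\le k\le i$, and it is equivalent to $\|\eta\|_{i+2}^2$ (up to lower-order horizontal terms).

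\textbf{Step 1: splitting $\|\varphi\|^2$.} For each $k$ expand
\[
\|\varphi\|_{k,i-k+2}^2=\|u\|_{k,i-k+2}^2+\frac{\lambda^2\bar{B}_3^4}{(\mu+\chi)^2}\|\eta\|_{k,i-k+2}^2+\frac{2\lambda\bar{B}_3^2}{\mu+\chi}\sum_{|\alpha|\le i-k+2}\int\partial^\alpha\partial_{\mathrm h}^k u\cdot\partial^\alpha\partial_{\mathrm h}^k\eta\,\mathrm{d}y .
\]
Since $u=\eta_t$, the cross term equals $\frac{\lambda\bar{B}_3^2}{\mu+\chi}\frac{\mathrm d}{\mathrm dt}\|\eta\|_{k,i-k+2}^2$. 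This is where $\bar B_3\neq0$ is used. Summing over $0\le k\le i$ gives a differential inequality for
\[
\overline{\|\eta\|}_{i+2}^2:=\frac{\lambda\bar{B}_3^2}{\mu+\chi}\sum_{k=0}^i\|\eta\|_{k,i-k+2}^2 ,
\]
with $\|(\eta,u)\|_{k,i-k+2}^2+\|\nabla p\|_{k,i-k}^2$ appearing on the left.

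\textbf{Step 2: the troublesome term $\|\eta\|_{k+1,i-k+1}^2$.} This term on the right of \eqref{11292247} has the same total order $i+2$ as the left side, but one more horizontal derivative. I would handle it by downward induction on $k$, i.e.\ by adding the estimates with geometrically decaying weights. For $k<i$, the term $\|\eta\|_{k+1,i-k+1}$ is bounded by $\|\eta\|_{k+1,i-k+1}$, which already appears on the left of the $(k+1)$-level estimate. For $k=i$, $\|\eta\|_{i+1,1}\lesssim\|\eta\|_{\underline{i+1},1}$ is a purely tangential quantity and is kept on the right. For $i=1,3$ this gives $\|(\eta,u)\|_{\underline{i+1},1}^2$. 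Choosing the weights $\epsilon^k$ increasing in $k$ with $\epsilon$ large enough lets each level absorb the contribution coming from the level below. I expect this weighted bookkeeping, done consistently with the time-derivative term, to be the main point requiring care. The terms $\|(\eta,u)\|_{k,0}^2$ are also tangential and lower order.

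\textbf{Step 3: nonlinear and $\mathrm{div}$ terms.} Lemma \ref{lem:nonlinear} gives $\|\mathcal{N}^1\|_i+\|\mathcal{N}^3\|_{i+1}\lesssim\delta(\|u\|_{i+2}+\|(\nabla\times w,\nabla p)\|_i)$ plus cross terms. Lemma \ref{201811202046} gives $\|\mathrm{div}\eta\|_{i+1}\lesssim\delta\|\eta\|_{i+2}$. For $i=1,3$ these are absorbed into the left side by smallness.

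For $i=4$ the cross terms $(\|u\|_3+\|(\nabla\times w,\nabla p)\|_1)\|\eta\|_{6}$ and $\|u\|_3\|\eta\|_{6}$ appear. These are bounded by $\mathcal{E}_H+\mathcal{D}_H$, using the a priori bound $\|\eta\|_6\le\delta$ from \eqref{aprpioses}. Likewise $\|\eta\|_{\underline 5,1}^2\lesssim\|\eta\|_6^2$, and $\|(u_t,\nabla\times w)\|_4^2$ and $\|u\|_{\underline 5,1}^2$ are contained in $\mathcal{E}_H+\mathcal{D}_H$ by the definitions. This yields all three cases of \eqref{202604271129}.
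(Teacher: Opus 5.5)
Your proposal is correct and follows the paper's proof. It uses the same splitting of $\|\varphi\|_{k,i-k+2}^2$, with the cross term written as $\frac{\lambda\bar B_3^2}{\mu+\chi}\frac{\mathrm{d}}{\mathrm{d}t}\|\eta\|_{k,i-k+2}^2$, and the same recursion in $k$ with weights increasing in $k$, so that $\|\eta\|_{k+1,i-k+1}^2$ is absorbed by the left-hand side at level $k+1$ and only $\|\eta\|_{i+1,1}^2$ survives. The nonlinear and $\mathrm{div}\eta$ terms are then absorbed by smallness via Lemmas \ref{lem:nonlinear} and \ref{201811202046}, exactly as in the paper.

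One small tidy-up at $i=4$: bounding $\|\eta\|_{5,1}^2$ by $\|\eta\|_6^2$ does not by itself give $\mathcal{E}_H+\mathcal{D}_H$. It does once you use $\bar B_3\neq0$ to get $\|\eta\|_6\lesssim\|(\bar B\cdot\nabla)\eta\|_5+\|\eta\|_{\underline{5},1}$, but it is simpler to note directly that $\|\eta\|_{5,1}^2\lesssim\|\eta\|_5^2+\|\nabla\eta\|_{\underline{5},0}^2\leqslant\mathcal{E}_H$.
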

where the functional $\overline{\|\eta\|}_{i+2}^2$ is equivalent to ${\|\eta\|}_{i+2}^2$.
\begin{pf}
Notice that $\bar{B}_3\neq0$ and
\begin{align*}
\| \varphi \|_{k,i-k+2}^2
=\frac{\mathrm{d}}{\mathrm{d}t}\left\|\sqrt{\bar{B}_3^2/(\mu+\chi)} \eta\right\|_{k,i-k+2}^2
+\|\bar{B}_3^2\eta/(\mu+\chi) \|_{k,i-k+2}^2+\| u\|_{k,i-k+2}^2.
\end{align*}
Substituting it into \eqref{11292247} then gives
\begin{align}\label{11292247n0}
&\frac{\mathrm{d}}{\mathrm{d}t}\left\|\sqrt{\bar{B}_3^2/(\mu+\chi)} \eta\right\|_{k,i-k+2}^2
+\|(\eta, u)\|_{k,i-k+2}^2+\|\nabla p \|_{k,i-k}^2\nonumber\\[1mm]
&\lesssim\|\eta\|_{k+1,i-k+1}^2+\|(\eta,u)\|_{k,0}^2+\|(u_t,\nabla\times w)\|_{i}^2+\|\mathcal{N}^1\|_{i}^2+\|(\mathrm{div}\eta,\mathcal{N}^3)\|_{i+1}^2.
 \end{align}
By virtue of the recursive inequality \eqref{11292247n0} from $k=0$ to $i$, we readily obtain
\begin{equation}\label{11300824}
\begin{aligned}
&\frac{\mathrm{d}}{\mathrm{d}t}
\overline{\|\eta\|}_{i+2}^2
+\|(\eta,u) \|_{i+2}^2
+\|\nabla p\|_{i}^2\\[1mm]
&\lesssim\|\eta\|_{i+1,1}^2+\|(\eta,u)\|_{\underline{i},0}^2+\|(u_t,\nabla\times w)\|_{i}^2
+\|\mathcal{N}^1\|_{i}^2+\|(\mathrm{div}\eta,\mathcal{N}^3)\|_{i+1}^2,
\end{aligned}
\end{equation}
where $$\overline{\|\eta\|}_{i+2}^2
:=\displaystyle\sum_{0\leqslant k\leqslant i}a_k\|\sqrt{\lambda\bar{B}_3^2/(\mu+\chi)} \eta\|_{k,i-k+2}^2,$$
and the coefficients $a_k$ are suitably large positive constants.
It is evident that $\overline{\|\eta\|}_{i+2}^2$ is equivalent to $\|\eta\|_{i+2}^2$.
Therefore, taking $i=1$, $3$, and $4$ in \eqref{11300824} respectively, and utilizing \eqref{11202054} together with
\eqref{06011711jumpv}--\eqref{11210836}, we immediately arrive at \eqref{202604271129} for sufficiently $\delta$. This completes the proof of Lemma \ref{lem:11292030es}.
\hfill$\Box$
\end{pf}

Moreover, applying the temporal derivatives $\partial_t^{j}$ to \eqref{202609221247}, we can find that $(\partial_t^ju,\partial_t^jp)$ satisfies
\begin{equation}\label{n0101nn928nes}
\begin{cases}
-(\mu+\chi)\Delta \partial_t^{j}u+\nabla \partial_t^{j}p= \partial_t^{j}H\quad &\mbox{in } \Omega ,\\[0.5mm]
\mathrm{div}\partial_t^{j} u=\partial_t^{j}\mathcal{N}^3 &\mbox{in } \Omega ,\\[0.5mm]
\partial_t^{j}u=0 &\mbox{on } \partial\Omega,
\end{cases}
\end{equation}
where $H$ is defined by
\begin{align}
& H:=\lambda(\bar{B}\cdot \nabla)^2\eta-u_t +2\chi \nabla \times w+\mathcal{N}^1.\nonumber
\end{align}
Similarly to \eqref{11292247}, applying the regularity theory of Stokes problem in Lemma \ref{10220835} to \eqref{n0101nn928nes}, yields, for $0\leqslant j\leqslant i$,
\begin{align}\label{112922471039}
&\|\partial_t^{j}u\|_{i-2j+1}^2+\|\nabla \partial_t^j p \|_{i-2j-1}^2\nonumber\\[1mm]
&\lesssim\|\partial_t^{j}u\|_{0}^2+\|\partial_t^{j}H\|_{i-2j-1}^2+\|\partial_t^{j}\mathcal{N}^3\|_{i-2j}^2\nonumber\\[1mm]
&\lesssim\|\partial_t^{j}u\|_{0}^2+\|\partial_t^{j}(u_t,\nabla\times w,(\bar{B}\cdot \nabla)^2\eta)\|_{i-2j-1}^2
+\|\partial_t^{j}\mathcal{N}^1\|_{i-2j-1}^2+\|\partial_t^{j}\mathcal{N}^3\|_{i-2j}^2.
\end{align}

We now establish the following estimates for $(u,w,p)$.
\begin{lem}\label{lem:0426}
Under assumption \eqref{aprpioses} with sufficiently small $\delta$, there holds that
\begin{align}
&\label{202604271340}
\|u\|_{2}^2+\|\nabla p\|_{0}^2\lesssim\|\eta\|_{2}^2+\|(\nabla w, u,u_t)\|_{0}^2,\\[1.5mm]
&\label{202604271303}
\|u\|_{6}^2+\|\nabla p\|_{4}^2
\lesssim \|u\|_{0}+\|((\bar{B}\cdot \nabla)^2\eta,\nabla w,u_t)\|_{4}^2,\\ 
&\label{202604271345}
\sum_{j=0}^{1}\left(\|\partial_t^ju\|_{5-2j}^2+\|\nabla\partial_t^jp\|_{3-2j}^2\right)
\lesssim\|((\bar{B}\cdot \nabla)^2\eta,\nabla w)\|_{3}^2 +\|u_{tt}\|_{1}^2 +\sum_{j=0}^{1}\|\partial_t^{j}u\|_{0}^2+(\mathcal{E}_H)^2,\\
&\label{202604271347}
\sum_{j=1}^{2}\left(\|\partial_t^ju\|_{6-2j}^2+\|\nabla\partial_t^j p\|_{4-2j}^2\right)
\lesssim \sum_{j=0}^{3}\|\partial_t^{j}u\|_{0}^2+\sum_{j=1}^{2}\|\partial_t^jw\|_{5-2j}^2+\mathcal{E}_{H}\mathcal{D}_H,\\
&\label{2026042712136}
\sum_{j=1}^{2}\|\partial_t^jw\|_{6+k-2j}^2\lesssim\|w\|_{4+k}^2
+\sum_{j=0}^{1}\left(\|\partial_t^ju\|_{5+k-2j}^2+\|\nabla\partial_t^jp\|_{3+k-2j}^2\right)\quad\mbox{for}\;\;k=0,1.
\end{align}
\end{lem}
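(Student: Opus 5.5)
The plan is to read off each of the five bounds from the Stokes estimate \eqref{112922471039} for suitable $(i,j)$, bounding the nonlinear terms with Lemma \ref{lem:nonlinear}, Lemma \ref{lem:0933} and \eqref{prtislsafdsfs}, and absorbing them by smallness of $\delta$ under \eqref{aprpioses}. The micro-rotation bound \eqref{2026042712136} instead comes directly from the ODE \eqref{202609221247temporal}$_2$.

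For \eqref{202604271340} I take $i=1$, $j=0$ in \eqref{112922471039}. Then $\|(\bar B\cdot\nabla)^2\eta\|_0\lesssim\|\eta\|_2$, and \eqref{06011711jumpv}--\eqref{11210836} give $\|\mathcal N^1\|_0+\|\mathcal N^3\|_1\lesssim\|\eta\|_3(\|u\|_2+\|(\nabla w,\nabla p)\|_0)$, which is absorbed since $\|\eta\|_3\le\delta$.

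For \eqref{202604271303} I take $i=5$, $j=0$. The nonlinear terms are $\|\mathcal N^1\|_4+\|\mathcal N^3\|_5$. By the $j=4,5$ cases of \eqref{06011711jumpv}--\eqref{11210836}, these are at most $\delta(\|u\|_6+\|(\nabla w,\nabla p)\|_4)$ plus terms of the form $(\|u\|_3+\|(\nabla w,\nabla p)\|_1)\|\eta\|_6$. The latter are problematic, because $\|\eta\|_6$ is only bounded, not small relative to the left-hand side. One option is to bound them by $\delta(\|u\|_3+\|\nabla p\|_1)+\delta\|\nabla w\|_1$ and absorb. In \eqref{202604271303} the $\|\nabla w\|_1$ piece is dominated by $\|\nabla w\|_4$ on the right, and $\|u\|_3+\|\nabla p\|_1$ is absorbed, so the bound closes using $\|\eta\|_6\le\delta$ from $\mathcal G_1$. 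The term $\|u\|_0$ (presumably meant squared) appears on the right from the Stokes lower-order term.

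For \eqref{202604271345} I use $i=5$, $j=0,1$. For $j=0$ this gives $\|u\|_5+\|\nabla p\|_3$ in terms of $\|((\bar B\cdot\nabla)^2\eta,\nabla w,u_t)\|_3$. For $j=1$ it gives $\|u_t\|_3+\|\nabla p_t\|_1$ in terms of $\|u_{tt}\|_1+\|\nabla w_t\|_1+\|(\bar B\cdot\nabla)^2u\|_1$ and the nonlinear terms. I then substitute $\|u_t\|_3$ into the $j=0$ estimate and $\|u\|_3\lesssim\|u\|_5$. The term $\|\nabla w_t\|_1$ is replaced via the $w$-equation by $\|w\|_2+\|u\|_3+\|\eta\|_3\|u\|_3$, and interpolation with Young's inequality absorbs the $u$-terms. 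The nonlinear terms from \eqref{2021081110937} and \eqref{10202100m} are quadratic, giving either absorbable pieces or $(\mathcal E_H)^2$.

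For \eqref{202604271347} I use $i=5$, $j=1,2$ in the same way. The $\|\partial_t^3u\|_0$, $\|\partial_t^jw\|$ and $(\bar B\cdot\nabla)^2\partial_t^{j-1}u$ terms are iterated downward from $j=2$. The nonlinear terms are $\lesssim\mathcal E_H\mathcal D_H$ by \eqref{202108110942nm} and \eqref{10202100m}.

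For \eqref{2026042712136} I read $\partial_t w=-4\chi w+2\chi\nabla_{\mathcal A}\times u$, and $\partial_t^2w$ by differentiating once more. This gives $\|w_t\|_{4+k}\lesssim\|w\|_{4+k}+\|u\|_{5+k}$ and $\|w_{tt}\|_{2+k}\lesssim\|w_t\|_{2+k}+\|u_t\|_{3+k}+\|\mathcal M^{t,1}\|_{2+k}$. The products are controlled by \eqref{aimdse}--\eqref{prtislsafdsfs} and smallness.

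I expect the main obstacle to be the highest-order nonlinear term $\|u\|_3\|\eta\|_6$ in \eqref{202604271303}. It closes only through the a priori smallness of $\|\eta\|_6$, not through $\mathcal E_H$, and this must be tracked carefully.
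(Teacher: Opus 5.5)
Your five steps are the paper's five steps. You use \eqref{112922471039} at $(j,i)=(0,1)$ and $(0,5)$ and absorb the nonlinear terms through $\|\eta\|_3\leqslant\delta$ and $\|\eta\|_6\leqslant\delta$; you are right that $\|u\|_0$ in \eqref{202604271303} should read $\|u\|_0^2$. You then use the recursive pair $j=0,1$ for \eqref{202604271345}, the pair $j=1,2$ for \eqref{202604271347}, and the micro-rotation equation for \eqref{2026042712136}. There are two small slips. First, the level of \eqref{112922471039} that produces $\|u\|_5+\|\nabla p\|_3$ and $\|u_t\|_3+\|\nabla p_t\|_1$ is $i=4$, not $i=5$. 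Second, $\|\nabla w_t\|_1$ should be bounded by applying $\nabla$ to the $w$-equation, which gives $\|\nabla w\|_1+\|u\|_3$ plus quadratic terms. Your bound $\|w\|_2$ brings in $\|w\|_0$, which is not on the right of \eqref{202604271345}.

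The step that does not go through is your treatment of \eqref{202604271347}.
\begin{itemize}
\item At $(j,i)=(1,5)$ the source $\partial_tH$ contains $\lambda(\bar B\cdot\nabla)^2u$. So the Stokes bound for $\|u_t\|_4^2+\|\nabla p_t\|_2^2$ carries $\|(\bar B\cdot\nabla)^2u\|_2^2$, a linear fourth-order quantity in $u$ itself.
\item Your downward iteration from $j=2$ handles $(\bar B\cdot\nabla)^2u_t$, by interpolating $\|u_t\|_2$ between $\|u_t\|_0$ and $\|u_t\|_4$. At $j=1$, however, there is no $j=0$ term on the left to absorb $\|(\bar B\cdot\nabla)^2u\|_2^2$.
\item Nothing on the right controls it either: only $\|u\|_0$, the terms $\|\partial_t^jw\|_{5-2j}$, and the quadratic $\mathcal{E}_H\mathcal{D}_H$ appear there.
\end{itemize}

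The paper's intermediate display drops the same term, so you are not departing from the paper. Even so, the inequality as stated appears not to hold without this term. A heuristic for the linearized whole-space system shows this. Take a high-frequency mode $u=a\,e^{iy\cdot\xi}$ with $a\perp\xi$ and $\bar B\cdot\xi\neq0$, set $w=\tfrac12\nabla\times u$, and let $\eta$ be a suitable multiple of $u$. Then $w_t=0$, $u_t=-\lambda(\bar B\cdot\xi)^2u/((\mu+\chi)|\xi|^2)$ and $u_{tt}=0$. The left side is therefore of order $|\xi|^8$, while the right side is $O(|\xi|^4)$.

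The remedy is to add $\|u\|_4^2$, or equivalently $\epsilon\|u\|_6^2+C_\epsilon\|u\|_0^2$, to the right of \eqref{202604271347}. This costs nothing where the estimate is used, in \eqref{202605012247}: $\|u\|_6^2$ sits in $\mathcal{D}_2$ on the left of \eqref{2021080610420501}, and $\|u\|_0^2$ already appears on the right.
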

\begin{pf}
To begin with, we derive the estimates \eqref{202604271340} and \eqref{202604271303}.
Taking $(j,i)=(0,1)$ in \eqref{112922471039} and using \eqref{06011711jumpv}--\eqref{11210836}, we have
\begin{align*}
\|u\|_{2}^2+\|\nabla p \|_{0}^2
&\lesssim\|u\|_{0}^2+\|(u_t,\nabla\times w,(\bar{B}\cdot \nabla)^2\eta)\|_{0}^2
+\|\mathcal{N}^1\|_{0}^2+\|\mathcal{N}^3\|_{1}^2\nonumber\\[1.5mm]
&\lesssim\|u\|_{0}^2+\|(u_t,\nabla w,(\bar{B}\cdot \nabla)^2\eta)\|_{0}^2+\|\eta\|_{3}^2(\|u\|_{2}^2+\|(\nabla p,\nabla w) \|_{0}^2).
\end{align*}
This implies \eqref{202604271340} for sufficiently small $\delta$.

In the same way, taking $(j,i)=(0,5)$ in \eqref{112922471039} and using \eqref{06011711jumpv}--\eqref{11210836}, we obtain
\begin{align*}
\|u\|_{6}^2+\|\nabla p \|_{4}^2
&\lesssim\|u\|_{0}^2+\|(u_t,\nabla\times w,(\bar{B}\cdot \nabla)^2\eta)\|_{4}^2
+\|\mathcal{N}^1\|_{4}^2+\|\mathcal{N}^3\|_{5}^2\nonumber\\[1.5mm]
&\lesssim\|u\|_{0}^2+\|(u_t,\nabla w,(\bar{B}\cdot \nabla)^2\eta)\|_{4}^2+\|\eta\|_{6}^2(\|u\|_{6}^2+\|(\nabla p,\nabla w) \|_{4}^2),
\end{align*}
which in turn yields \eqref{202604271303} for sufficiently small $\delta$.

We now turn to the estimate of \eqref{202604271345}.
Invoking the recursive inequality \eqref{112922471039} for $i=4$ from $j=0$ to $1$, we have
\begin{align}\label{202604272045}
\sum_{j=0}^{1}\left(\|\partial_t^ju\|_{5-2j}^2+\|\nabla\partial_t^jp\|_{3-2j}^2\right)
&\lesssim\|(u,u_t)\|_{0}^2+\|(\nabla w,(\bar{B}\cdot \nabla)^2\eta)\|_{3}^2+\|(\nabla w_t,(\bar{B}\cdot \nabla)^2 u,u_{tt})\|_{1}^2\nonumber\\
&\;\quad+\sum_{j=0}^{1}\left(\|\partial_t^j\mathcal{N}^1\|_{3-2j}^2+\|\partial_t^j\mathcal{N}^3\|_{4-2j}^2\right).
\end{align}
Furthermore, by virtue of \eqref{202609221247n}$_3$, \eqref{06011711jumpv}--\eqref{2021081110937}  and \eqref{10202100m}, we can estimate that
\begin{align*}
\|\nabla w_t\|_{1}^2+\sum_{j=0}^{1}\left(\|\partial_t^j\mathcal{N}^1\|_{3-2j}^2+\|\partial_t^j\mathcal{N}^3\|_{4-2j}^2\right)\lesssim
\|(\nabla^2 u,\nabla w)\|_{1}^2+(\mathcal{E}_{H})^2.
\end{align*}
Inserting the above estimate into \eqref{202604272045} and using the interpolation inequality leads to \eqref{202604271345}.

Analogously, we deduce from the recursive inequality \eqref{112922471039} for $i=5$ from $j=1$ to $2$ that
\begin{align*}
&\sum_{j=1}^{2}\left(\|\partial_t^ju\|_{6-2j}^2+\|\nabla\partial_t^j p\|_{4-2j}^2\right)\\
&\lesssim  \sum_{j=1}^{3}\|\partial_t^{j}u\|_{0}^2
+\sum_{j=1}^{2}\left(\|\partial_t^j\mathcal{N}^1\|_{4-2j}^2+\|\partial_t^j\mathcal{N}^3\|_{5-2j}^2\right)+\sum_{j=1}^{2}\|\partial_t^jw\|_{5-2j}^2.
\end{align*}
Combining this with \eqref{202108110942nm}--\eqref{10202100m}  then gives \eqref{202604271347}.

Finally, we establish the estimate \eqref{2026042712136}.
Indeed, it directly follows from \eqref{202609221247n}$_3$ that
\begin{align}\label{2026042712136m}
\sum_{j=1}^{2}\|\partial_t^jw\|_{6+k-2j}^2
&\lesssim\sum_{j=0}^{1}\|\partial_t^jw\|_{4+k-2j}^2
+\sum_{j=0}^{1}\|\partial_t^ju\|_{5+k-2j}^2+\sum_{j=0}^{1}\|\partial_t^j\mathcal{N}^2\|_{4+k-2j}^2\nonumber\\
&\lesssim\|w\|_{4+k}^2+\sum_{j=0}^{1}\|\partial_t^ju\|_{5+k-2j}^2+\sum_{j=0}^{1}\|\partial_t^j\mathcal{N}^2\|_{4+k-2j}^2.
\end{align}
In view of \eqref{11210836} and \eqref{10202100m}, it is easy to estimate that
$$\sum_{j=0}^{1}\|\partial_t^j\mathcal{N}^2\|_{4+k-2j}^2
\lesssim\mathcal{E}_{H}\sum_{j=0}^{1}\left(\|\partial_t^ju\|_{5+k-2j}^2+\|\nabla\partial_t^jp\|_{3+k-2j}^2\right).$$
Substituting this bound into \eqref{2026042712136m} yields \eqref{2026042712136} for sufficiently small $\delta$.
This completes the proof of Lemma \ref{lem:0426}.
\hfill$\Box$
\end{pf}

In the spirit of the approach in \cite{FengSCM},
we now utilize the ODE structure of the linear perturbed system to recover the estimates for the normal derivatives of $((\bar{B}\cdot \nabla)^2\eta,u,w)$.
We first denote
$$W:=(W_1,W_2,W_3)^{\top}=\nabla\times w.$$
Applying the curl operator to \eqref{202609221247n}$_3$ and using the identity $\Delta u= \nabla\mathrm{div}u-\nabla\times\nabla\times u$, one finds that
\begin{align}\label{202604101338}
W_{t}+4\chi W=2\chi(\nabla\mathrm{div}u-\Delta u)+\nabla\times\mathcal{N}^2.
\end{align}
In view of \eqref{202609221247n}$_2$ and \eqref{202604101338}, we observe that $\big((\bar{B}\cdot \nabla)^2\eta_{\mathrm{h}},\partial_3^2 u_{\mathrm{h}},W_{\mathrm{h}}\big)$ satisfies
\begin{equation}\label{2026195637es}
\begin{cases}
\lambda (\bar{B}\cdot \nabla)^2\eta_{\mathrm{h}}+(\mu + \chi)\partial_{3}^2u_{\mathrm{h}}+2\chi W_{\mathrm{h}}
=\partial_tu_{\mathrm{h}}  +  \nabla_{\mathrm{h}} p- (\mu + \chi) \Delta_{\mathrm{h}} u_{\mathrm{h}}
-\mathcal{N}^1_{\mathrm{h}}:=\tilde{\mathcal{N}}_{\mathrm{h}}
 &\mbox{in } \Omega ,\\[1mm]
\partial_{t}W_{\mathrm{h}}+4\chi W_{\mathrm{h}}+2\chi\partial_{3}^2u_{\mathrm{h}}
=2\chi(\nabla_{\mathrm{h}}\mathcal{N}^3-\Delta_{\mathrm{h}} u_{\mathrm{h}})+(\nabla\times\mathcal{N}^2)_{\mathrm{h}}:=\tilde{\mathcal{M}}_{\mathrm{h}} &\mbox{in } \Omega,
\end{cases}
\end{equation}
and that $W_{3}$ satisfies
\begin{equation}\label{2026195638es}
\partial_{t}W_{3}+4\chi W_{3}
=2\chi\big(\partial_3\mathrm{div}_{\mathrm{h}}u_{\mathrm{h}}-\Delta_{\mathrm{h}} u_{3}\big)+(\nabla\times\mathcal{N}^2)_{3}:=\tilde{\mathcal{M}}_{3}\quad \mbox{in } \Omega.
\end{equation}
Noting that the order of $\partial_3$ in the linear parts on the right hand sides of \eqref{2026195637es}--\eqref{2026195638es}
is lower than that of $\partial_3$ on the left hand side, this feature allows us to convert the
$y_3$-derivative estimates of the solution into the $y_{\mathrm{h}}$-derivative estimates.
More precisely, we have the following estimates for $(\bar{B}\cdot \nabla)^2\eta$, $u$ and $W$.
\begin{lem}\label{uwnormal}
Under assumption \eqref{aprpioses} with sufficiently small $\delta$, there holds that, 
\begin{align}\label{202604102028}
&\frac{\mathrm{d}}{\mathrm{d}t}\mathcal{H}_{i}(\eta,W)+\|((\bar{B}\cdot \nabla)^2\eta, W)\|_{i}^2+\|u\|_{i+2}^2\nonumber\\[1mm]
&\lesssim\begin{cases}
\|\nabla p\|_{1}\|((\bar{B}\cdot \nabla)\eta,\nabla u)\|_{2}^{1/2}\|((\bar{B}\cdot \nabla)\eta,\nabla u)\|_{\underline{2},0}^{1/2}
\\[0.5mm]
+\|((\bar{B}\cdot \nabla)\eta, \nabla u)\|_{\underline{2},0}^2+\|u_t\|_1^2
+\sqrt{\mathcal{E}_H} \mathcal{D}_L& \;\;\mathrm{for }\;\;  i=1; \\[2mm]
\|\nabla p\|_{4}\|((\bar{B}\cdot \nabla)\eta,\nabla u)\|_{5}^{1/2}((\bar{B}\cdot \nabla)\eta,\nabla u)\|_{\underline{5},0}^{1/2}\\[0.5mm]
+\|((\bar{B}\cdot \nabla)\eta, \nabla u)\|_{\underline{5},0}^2+\|u_t\|_4^2+\sqrt{\mathcal{E}_H} \mathcal{D}_H+\|(\eta,u)\|_{3}\|\eta\|_{6}^2\quad& \;\;\mathrm{for }\;\; i=4,
\end{cases}
\end{align}
where the functional $\mathcal{H}_{i}(\eta,W)$ satisfies that
\begin{align}\label{202604261336}
\|((\bar{B}\cdot \nabla)^2\eta_{\mathrm{h}} ,W)\|_{i}^2\lesssim\mathcal{H}_{i}(\eta,W).
\end{align}
\end{lem}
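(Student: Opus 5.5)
The plan is to exploit the ODE structure of \eqref{2026195637es}--\eqref{2026195638es}. We apply $\partial_{\mathrm{h}}^{j}\partial_3^{l}$ with $j+l\leqslant i$ and run energy estimates. We first treat the horizontal system \eqref{2026195637es}. Denote $X:=\lambda(\bar{B}\cdot\nabla)^2\eta_{\mathrm{h}}$, $Y:=\partial_3^2u_{\mathrm{h}}$ and $Z:=W_{\mathrm{h}}$, and let $D:=\partial_{\mathrm{h}}^{j}\partial_3^{l}$.

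We multiply $D$\eqref{2026195637es}$_1$ by $DX$ and by $DY$, and $D$\eqref{2026195637es}$_2$ by $DZ$. Since $\eta_t=u$, the pairing $\int DY\cdot D(\bar{B}\cdot\nabla)^2\eta_{\mathrm{h}}$ can be turned into $\frac{\mathrm{d}}{\mathrm{d}t}$ of a cross term plus $\|(\bar{B}\cdot\nabla)D\partial_3u_{\mathrm{h}}\|_0^2$-type terms. We integrate by parts in $\partial_3$ only where the boundary terms vanish; otherwise we keep the product form.

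Combining the three identities with suitable weights gives, after Young's inequality, a quadratic form in $(DX,DY,DZ)$. Using \eqref{2026195637es}$_1$ to eliminate $Y$ in the $Z$-equation gives $\partial_tZ+cZ+c'X=\ldots$. Pairing this with $DX$ and $DZ$ yields a positive-definite dissipation $\|DX\|_0^2+\|DZ\|_0^2$, thanks to $\mu>0$ in the combination $\mu+\chi$ versus the $4\chi$ damping. It also produces a time derivative of a functional containing $\|DZ\|_0^2$ and cross terms with $\int DZ\cdot D(\bar{B}\cdot\nabla)^2\eta_{\mathrm{h}}$. Summing over $j+l\leqslant i$, we define $\mathcal{H}_i$ as this functional plus large multiples of lower-order pieces, so that \eqref{202604261336} holds. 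The $\|DY\|_0$ dissipation is then recovered from \eqref{2026195637es}$_1$.

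Next come the vertical components. For $W_3$, equation \eqref{2026195638es} is a damped ODE whose right-hand side contains only $\partial_3\nabla_{\mathrm{h}}u$ and $\nabla_{\mathrm{h}}^2u$ (plus $\mathcal{N}^2$). A direct estimate therefore gives $\frac{\mathrm{d}}{\mathrm{d}t}\|W_3\|_i^2+\|W_3\|_i^2\lesssim\|\nabla\partial_{\mathrm{h}}u\|_i^2+$h.o.t. For $(\bar{B}\cdot\nabla)^2\eta_3$ and $\partial_3^2u_3$ we use $\mathrm{div}u=\mathcal{N}^3$, which gives $\partial_3u_3=-\mathrm{div}_{\mathrm{h}}u_{\mathrm{h}}+\mathcal{N}^3$. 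Likewise \eqref{11202054} gives $\partial_3\eta_3=-\mathrm{div}_{\mathrm{h}}\eta_{\mathrm{h}}+O(\eta\nabla\eta)$. These identities convert the vertical normal estimates into horizontal-derivative bounds on the $\mathrm{h}$-components. All the linear right-hand sides $\partial_tu_{\mathrm{h}}$, $\Delta_{\mathrm{h}}u$, $\nabla\partial_{\mathrm{h}}u$ contain either tangential derivatives or strictly fewer $\partial_3$'s. We therefore proceed by induction on $l$, bounding them by $\|u_t\|_i^2$, $\|((\bar{B}\cdot\nabla)\eta,\nabla u)\|_{\underline{i+1},0}^2$ and previously controlled lower-$l$ norms, with the final $\|u\|_{i+2}$ bound coming from combining with the $\partial_3^2u_{\mathrm{h}}$ dissipation. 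The nonlinear terms $\mathcal{N}^1,\mathcal{N}^2,\mathcal{N}^3$ are handled by Lemma~\ref{lem:nonlinear}, giving $\sqrt{\mathcal{E}_H}\mathcal{D}_L$ for $i=1$ and $\sqrt{\mathcal{E}_H}\mathcal{D}_H+\|(\eta,u)\|_3\|\eta\|_6^2$ for $i=4$.

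The main obstacle is the pressure term $\int D\nabla_{\mathrm{h}}p\cdot D(\bar{B}\cdot\nabla)^2\eta_{\mathrm{h}}$, together with its analogue paired against $DY$. We split according to how many $\partial_3$'s fall on $p$.
\begin{itemize}
\item[(a)] When $D$ contains at least one $\partial_3$ that can be traded, we integrate one horizontal derivative off $\nabla_{\mathrm{h}}p$. This bounds the term by $\|\nabla\partial_{\mathrm{h}}p\|_{j,l-1}$ times norms of $(\bar{B}\cdot\nabla)\eta$ and $u$ with one extra horizontal derivative, and these are absorbed by Young's inequality into tangential norms.
\item[(b)] In the purely horizontal case $D=\partial_{\mathrm{h}}^i$, we integrate one $(\bar{B}\cdot\nabla)$ by parts. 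The boundary term is handled by the refined trace estimate \eqref{37190928} for $\partial_{\mathrm{h}}^i(\bar{B}\cdot\nabla)\eta_{\mathrm{h}}$ and $\partial_{\mathrm{h}}^i\nabla u_{\mathrm{h}}$, applied in the interpolated form $\|f\|_{L^2(\partial\Omega)}\lesssim\|f\|_1^{1/2}\|f\|_0^{1/2}$. This yields exactly $\|\nabla p\|_i\|((\bar{B}\cdot\nabla)\eta,\nabla u)\|_{i+1}^{1/2}\|((\bar{B}\cdot\nabla)\eta,\nabla u)\|_{\underline{i+1},0}^{1/2}$, which is the term left on the right of \eqref{202604102028}.
\end{itemize}
This term is kept rather than absorbed; it is closed later using \eqref{202604271303}--\eqref{202604271345} and the tangential Lemmas~\ref{lem:21092401}--\ref{lem:26092401}.
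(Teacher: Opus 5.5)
The reduction to \eqref{2026195637es}--\eqref{2026195638es}, the damped estimate for $W_3$, the use of $\mathrm{div}\,u=\mathcal N^3$ and \eqref{11202054} for the third components, and the case split of the pressure term all match the paper. The coercivity step for the horizontal system, however, fails as you describe it.

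At principal order, keep your $X,Y,Z$ and drop $\tilde{\mathcal N}_{\mathrm h}$, $\tilde{\mathcal M}_{\mathrm h}$ and all terms carrying an extra horizontal derivative. Set $a=2\chi/(\mu+\chi)$, $b=\lambda\bar B_3^2/(\mu+\chi)$, $c=4\chi\mu/(\mu+\chi)$. Eliminating $Y$ gives $\partial_tZ+cZ-aX=0$, and $\eta_t=u$ gives $\partial_tX=\lambda(\bar B\cdot\nabla)^2u_{\mathrm h}=-b(X+2\chi Z)$.

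Testing with $DZ$ and $DX$ produces $\frac{\mathrm d}{\mathrm dt}\big(\frac{\alpha}{2}\|DZ\|_0^2+\beta\int DZ\cdot DX\big)$, as you say, but no functional of this form can work:
\begin{itemize}
\item It is not coercive in $DX$, so \eqref{202604261336} fails at top order, and adding lower-order pieces cannot repair that.
\item Its dissipation form needs $\beta<0$; normalizing $\beta=-1$ gives $a|X|^2-(\alpha a+b+c)X\cdot Z+(\alpha c-2\chi b)|Z|^2$. Here $4a(\alpha c-2\chi b)-(\alpha a+b+c)^2\leqslant-4b(c+2\chi a)=-16\chi b<0$ for every $\alpha$, so it is never positive definite.
\end{itemize}

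The missing ingredient is $\lambda\|(\bar B\cdot\nabla)^2D\eta_{\mathrm h}\|_0^2$ in the energy. The paper tests \eqref{2026195637es}$_1$ with $(\bar B\cdot\nabla)^2Du_{\mathrm h}=\partial_t(\bar B\cdot\nabla)^2D\eta_{\mathrm h}$, and \eqref{2026195637es}$_2$ with $\bar B_3^2DW_{\mathrm h}$. Since $(\bar B\cdot\nabla)^2=\bar B_3^2\partial_3^2+{}$(operators carrying a horizontal derivative), no $y_3$-integration by parts is needed. The $X$-term is then exactly $\frac{\lambda}{2}\frac{\mathrm d}{\mathrm dt}\|(\bar B\cdot\nabla)^2D\eta_{\mathrm h}\|_0^2$. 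The principal dissipation is $\bar B_3^2\big((\mu+\chi)\|DY\|_0^2+4\chi\int DY\cdot DZ+4\chi\|DZ\|_0^2\big)=\bar B_3^2\big(\mu\|DY\|_0^2+\chi\|DY+2DZ\|_0^2\big)$, which is where $\mu>0$ enters. Finally, $\|DX\|_0$ is recovered by testing \eqref{2026195637es}$_1$ with $(\bar B\cdot\nabla)^2D\eta_{\mathrm h}$.

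Your first observation points the right way if done algebraically: $\int DX\cdot DY=\frac{1}{2\lambda\bar B_3^2}\frac{\mathrm d}{\mathrm dt}\|DX\|_0^2$ plus terms with an extra horizontal derivative on $u$. Integrating by parts in $y_3$ only where boundary terms vanish loses exactly the top case $D=\partial_{\mathrm h}^i$.

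Your case (b) also does not produce the factor $\|\nabla p\|_i$. With only the $L^2(\partial\Omega)$ trace, the pairing $\int_{\partial\Omega}\nabla_{\mathrm h}\partial_{\mathrm h}^ip\cdot(\bar B\cdot\nabla)\partial_{\mathrm h}^iu_{\mathrm h}\,\vec n_3$ costs half a derivative too many on one factor, e.g.\ $\|\nabla p\|_{i+1}^{1/2}$. But $\|\nabla p\|_5$ is not in $\mathcal D_H$ when $i=4$. The paper instead:
\begin{itemize}
\item splits the horizontal derivative across the pairing by the dual estimate \eqref{11190840};
\item uses $|\partial_{\mathrm h}^ip|_{H^{1/2}(\partial\Omega)}\lesssim\|\partial_{\mathrm h}^ip\|_1\lesssim\|\nabla p\|_i$;
\item bounds $|(\bar B\cdot\nabla)\partial_{\mathrm h}^iu_{\mathrm h}|_{H^{1/2}(\partial\Omega)}$ by the $H^{1/2}$ trace estimate \eqref{37190928}, which supplies the tangential factor $\|\cdot\|_{\underline{i+1},0}^{1/2}$.
\end{itemize}

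Likewise, in case (a), $\|\nabla\partial_{\mathrm h}p\|_{j,l-1}$ is not a tangential norm and cannot simply be absorbed by Young's inequality. It must be bounded through the Stokes estimate (Lemma \ref{10220835}) for the $\partial_{\mathrm h}^{j+1}$-differentiated problem, in terms of $(u_t,(\bar B\cdot\nabla)^2\eta,W)$ with one more horizontal derivative. This is what drives the recursion in the number of horizontal derivatives, cf.\ \eqref{202604262051}.
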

\begin{pf}
The proof Lemma \ref{uwnormal} is divided into the following four steps.

{\bf{Step 1. Estimates for $\partial_3^2u_{\mathrm{h}}$ and $W_{\mathrm{h}}$.}}
Let $0\leqslant j\leqslant i\leqslant5$ and $0\leqslant k\leqslant i-j$.
Applying $\partial_{\mathrm{h}}^{j+k}\partial_{3}^{i-j-k}$ to \eqref{2026195637es}$_1$ and \eqref{2026195637es}$_2$,
and taking the inner product of
the resulting identities with $(\bar{B}\cdot \nabla)^2\partial_{\mathrm{h}}^{j+k}\partial_{3}^{i-j-k}u_{\mathrm{h}}$ and $\bar{B}_3^2\partial_{\mathrm{h}}^{j+k}\partial_{3}^{i-j-k} W_{\mathrm{h}}$ in $L^2$, respectively, we have
\begin{align}\label{2092650411342}
&\frac{\lambda}{2}\frac{\mathrm{d}}{\mathrm{d}t}\|(\bar{B}\cdot \nabla)^2\partial_{\mathrm{h}}^{j+k}\partial_{3}^{i-j-k}\eta_{\mathrm{h}}\|_{0}^2
\nonumber\\[1mm]&
+\bar{B}_3^2(\mu+\chi)\|\partial_{\mathrm{h}}^{j+k}\partial_{3}^{2+i-j-k}u_{\mathrm{h}}\|_{3}^2
+2\chi\bar{B}_3^2\int\partial_{\mathrm{h}}^{j+k}\partial_{3}^{i-j-k}W_{\mathrm{h}}\cdot\partial_{\mathrm{h}}^{j+k}\partial_{3}^{2+i-j-k}u_{\mathrm{h}}\mathrm{d}y\nonumber\\[1mm]&
=\int\partial_{\mathrm{h}}^{j+k}\partial_{3}^{i-j-k}\tilde{\mathcal{N}}_{\mathrm{h}}
\cdot(\bar{B}\cdot \nabla)^2\partial_{\mathrm{h}}^{j+k}\partial_{3}^{i-j-k}u_{\mathrm{h}}\mathrm{d}y\nonumber\\[1mm]&
\quad-(\mu+\chi)\int\partial_{\mathrm{h}}^{j+k}\partial_{3}^{2+i-j-k}u_{\mathrm{h}}\cdot
\big((\bar{B}_{\mathrm{h}}\cdot \nabla_{\mathrm{h}})^2+2(\bar{B}_3\partial_3)(\bar{B}_{\mathrm{h}}\cdot \nabla_{\mathrm{h}})\big)\partial_{\mathrm{h}}^{j+k}\partial_{3}^{i-j-k}u_{\mathrm{h}}\mathrm{d}y\nonumber\\[1mm]&
\quad-2\chi\int\partial_{\mathrm{h}}^{j+k}\partial_{3}^{i-j-k}W_{\mathrm{h}}\cdot
\big((\bar{B}_{\mathrm{h}}\cdot \nabla_{\mathrm{h}})^2+2(\bar{B}_3\partial_3)(\bar{B}_{\mathrm{h}}\cdot \nabla_{\mathrm{h}})\big)\partial_{\mathrm{h}}^{j+k}\partial_{3}^{i-j-k}u_{\mathrm{h}}\mathrm{d}y
\end{align}
and
\begin{align}\label{2092650411340}
&\frac{\bar{B}_3^2}{2}\frac{\mathrm{d}}{\mathrm{d}t}\|\partial_{\mathrm{h}}^{j+k}\partial_{3}^{i-j-k} W_{\mathrm{h}}\|_{0}^2
\nonumber\\[1mm]&
+4\chi\bar{B}_3^2\|\partial_{\mathrm{h}}^{j+k}\partial_{3}^{i-j-k} W_{\mathrm{h}}\|_{0}^2+2\chi\bar{B}_3^2\int\partial_{\mathrm{h}}^{j+k}\partial_{3}^{2+i-j-k}u_{\mathrm{h}}\cdot\partial_{\mathrm{h}}^{j+k}\partial_{3}^{i-j-k}W_{\mathrm{h}}\mathrm{d}y
\qquad\quad\nonumber\\[1mm]&
=\bar{B}_3^2\int\partial_{\mathrm{h}}^{j+k}\partial_{3}^{i-j-k}\tilde{\mathcal{M}}_{\mathrm{h}}
\cdot\partial_{\mathrm{h}}^{j+k}\partial_{3}^{i-j-k}(\bar{B}\cdot \nabla)^2u_{\mathrm{h}}\mathrm{d}y.
\end{align}
Combining \eqref{2092650411340} with \eqref{2092650411342}, it follows that
\begin{align}\label{2092650411345}
&\frac{1}{2}\frac{\mathrm{d}}{\mathrm{d}t}
\left(\lambda\|(\bar{B}\cdot \nabla)^2\partial_{\mathrm{h}}^{j+k}\partial_{3}^{i-j-k}\eta_{\mathrm{h}}\|_{0}^2
+\bar{B}_3^2\|\partial_{\mathrm{h}}^{j+k}\partial_{3}^{i-j-k} W_{\mathrm{h}}\|_{0}^2\right)
+(\mu+\chi)\bar{B}_3^2\|\partial_{\mathrm{h}}^{j+k}\partial_{3}^{2+i-j-k}u_{\mathrm{h}}\|_{3}^2\nonumber\\[1mm]&
+4\chi\bar{B}_3^2\|\partial_{\mathrm{h}}^{j+k}\partial_{3}^{i-j-k} W_{\mathrm{h}}\|_{0}^2
+4\chi\bar{B}_3^2\int\partial_{\mathrm{h}}^{j+k}\partial_{3}^{i-j-k}W_{\mathrm{h}}\cdot\partial_{\mathrm{h}}^{j+k}\partial_{3}^{2+i-j-k}u_{\mathrm{h}}\mathrm{d}y
=\sum_{l=7}^{10}I_{l},
\end{align}
where
\begin{align*}
&I_7:=\int\partial_{\mathrm{h}}^{j+k}\partial_{3}^{i-j-k}\tilde{\mathcal{N}}_{\mathrm{h}}
\cdot(\bar{B}\cdot \nabla)^2\partial_{\mathrm{h}}^{j+k}\partial_{3}^{i-j-k}u_{\mathrm{h}}\mathrm{d}y,\\[1mm]
&I_8:=\bar{B}_3^2\int\partial_{\mathrm{h}}^{j+k}\partial_{3}^{i-j-k}\tilde{\mathcal{M}}_{\mathrm{h}}
\cdot(\bar{B}\cdot \nabla)^2\partial_{\mathrm{h}}^{j+k}\partial_{3}^{i-j-k}u_{\mathrm{h}}\mathrm{d}y,\\[1mm]
&I_9:=-(\mu+\chi)\int\partial_{\mathrm{h}}^{j+k}\partial_{3}^{2+i-j-k}u_{\mathrm{h}}\cdot
\big((\bar{B}_{\mathrm{h}}\cdot \nabla_{\mathrm{h}})^2+2(\bar{B}_3\partial_3)(\bar{B}_{\mathrm{h}}\cdot \nabla_{\mathrm{h}})\big)\partial_{\mathrm{h}}^{j+k}\partial_{3}^{i-j-k}u_{\mathrm{h}}\mathrm{d}y,\\[1mm]
&I_{10}:=-2\chi\int\partial_{\mathrm{h}}^{j+k}\partial_{3}^{i-j-k}W_{\mathrm{h}}\cdot
\big((\bar{B}_{\mathrm{h}}\cdot \nabla_{\mathrm{h}})^2+2(\bar{B}_3\partial_3)(\bar{B}_{\mathrm{h}}\cdot \nabla_{\mathrm{h}})\big)\partial_{\mathrm{h}}^{j+k}\partial_{3}^{i-j-k}u_{\mathrm{h}}\mathrm{d}y.
\end{align*}

Observe the identity
\begin{align*}
&\|\partial_{\mathrm{h}}^{j}\partial_{3}^{3-i}u_{\mathrm{h}}\|_0^2
+4 \int\partial_{\mathrm{h}}^{j+k}\partial_{3}^{i-j-k}W_{\mathrm{h}}\cdot\partial_{\mathrm{h}}^{j+k}\partial_{3}^{2+i-j-k}u_{\mathrm{h}}\mathrm{d}x
+4 \|\partial_{\mathrm{h}}^{j+k}\partial_{3}^{i-j-k}W_{\mathrm{h}}\|_0^2\qquad\nonumber\\[1.5mm]
&=\|\partial_{\mathrm{h}}^{j+k}\partial_{3}^{2+i-j-k}u_{\mathrm{h}}+2 \partial_{\mathrm{h}}^{j+k}\partial_{3}^{i-j-k}u_{\mathrm{h}}W_{\mathrm{h}}\|_{0}^2
\end{align*}
and the inequality
\begin{align*}
\|\partial_{\mathrm{h}}^{j+k}\partial_{3}^{i-j-k}W_{\mathrm{h}}\|_{0}^2\lesssim
\|\partial_{\mathrm{h}}^{j+k}\partial_{3}^{2+i-j-k}u_{\mathrm{h}}+2 \partial_{\mathrm{h}}^{j+k}\partial_{3}^{i-j-k}u_{\mathrm{h}}W_{\mathrm{h}}\|_{0}^2
+\epsilon \|\partial_{\mathrm{h}}^{j+k}\partial_{3}^{2+i-j-k}u_{\mathrm{h}}\|_{0}^2,
\end{align*}
which holds for any positive constant $\epsilon>0$.
Moreover, we can have the following bound:
\begin{align*}
\sum_{l=7}^{10}I_{l}
\lesssim&\|(\partial_{\mathrm{h}}^{j+k}\partial_{3}^{2+i-j-k}u_{\mathrm{h}},\partial_{\mathrm{h}}^{j+k}\partial_{3}^{i-j-k}W_{\mathrm{h}})\|_{0}
\big(\|u_t\|_{i}+\|\nabla u_{\mathrm{h}}\|_{j+1,i-j}\big)\qquad\nonumber\\[1mm]
&\;+\|(\nabla^2u_{\mathrm{h}},W_{\mathrm{h}})\|_{j,i-j}\|(\mathcal{N}^1,\nabla_{\mathrm{h}}\mathcal{N}^3, \nabla\times\mathcal{N}^2)\|_{i}\nonumber\\[1mm]
&\;+\int\partial_{\mathrm{h}}^{j+k}\partial_{3}^{i-j-k}\nabla_{\mathrm{h}} p
\cdot(\bar{B}\cdot \nabla)^2\partial_{\mathrm{h}}^{j+k}\partial_{3}^{i-j-k}u_{\mathrm{h}}\mathrm{d}y.
\end{align*}
Therefore, exploiting Young's inequality and the above three estimates, and summing over $k$, we deduce from  \eqref{2092650411345} that there exists a positive constant $c_1>0$ such that
\begin{align}\label{2092650411535}
&\frac{1}{2}\frac{\mathrm{d}}{\mathrm{d}t}\left(\lambda\|(\bar{B}\cdot \nabla)^2\eta_{\mathrm{h}}\|_{j,i-j}^2
+\bar{B}_3^2\| W_{\mathrm{h}}\|_{j,i-j}^2\right)
+c_1\big(\|u_{\mathrm{h}}\|_{j,i-j+2}^2+\|W_{\mathrm{h}}\|_{j,i-j}^2\big)
\nonumber\\[1mm]
&\lesssim
\|u_{\mathrm{h}}\|_{\underline{j+1},i-j+1}^2+\|u_t\|_{i}^2+\|\mathcal{N}^1\|_{i}^2+\|(\mathcal{N}^2,\mathcal{N}^3)\|_{i+1}^2\nonumber\\[1mm]
&\quad+\sum_{k=0}^{i-j}\left|\int\partial_{\mathrm{h}}^{j+k}\partial_{3}^{i-j-k}\nabla_{\mathrm{h}} p
\cdot\partial_{\mathrm{h}}^{j+k}\partial_{3}^{i-j-k}(\bar{B}\cdot \nabla)^2u_{\mathrm{h}}\mathrm{d}y\right|:=I_{11}.
\end{align}
It remains to estimate $I_{11}$, which we split into three cases.

For the case $i=0$, by using the interpolation inequality, we find that
\begin{align}\label{202604261537a}
I_{11}\lesssim\|\nabla p\|_{0}\|\nabla u_{\mathrm{h}}\|_{1}\lesssim\|\nabla p\|_{0}\|\nabla u_{\mathrm{h}}\|_{2}^{1/2}\|\nabla u_{\mathrm{h}}\|_{0}^{1/2}.
\end{align}
For the case $i\neq0$ and $j+k<i$, we can have the bound:
\begin{align}\label{202604261537b}
\left|\int\partial_{\mathrm{h}}^{j+k}\partial_{3}^{i-j-k}\nabla_{\mathrm{h}} p
\cdot(\bar{B}\cdot \nabla)^2\partial_{\mathrm{h}}^{j+k}\partial_{3}^{i-j-k}u_{\mathrm{h}}\mathrm{d}y\right|
\lesssim\|\nabla \partial_{\mathrm{h}}p\|_{j,i-j-1}\|u_{\mathrm{h}}\|_{j,i-j+2}.
\end{align}
For the remaining case $i\neq0$ and $j+k=i$, an integration by parts over $\Omega$ yields
\begin{align}\label{2092650411539nnmm}
&\int\partial_{\mathrm{h}}^{j+k}\partial_{3}^{i-j-k}\nabla_{\mathrm{h}} p
\cdot(\bar{B}\cdot \nabla)^2\partial_{\mathrm{h}}^{j+k}\partial_{3}^{i-j-k}u_{\mathrm{h}}\mathrm{d}y
\nonumber\\[1mm]&
=\int\nabla_{\mathrm{h}}\partial_{\mathrm{h}}^{i} p
\cdot(\bar{B}\cdot \nabla)^2\partial_{\mathrm{h}}^{i}u_{\mathrm{h}}\mathrm{d}y
\nonumber\\[1mm]
&=\int (\bar{B}\cdot \nabla)\partial_{\mathrm{h}}^{i}p\cdot(\bar{B}\cdot \nabla)\partial_{\mathrm{h}}^{i}\mathrm{div}_{\mathrm{h}}u_{\mathrm{h}}\mathrm{d}y
+\bar{B}_3\int_{\partial\Omega}\nabla_{\mathrm{h}}\partial_{\mathrm{h}}^{i}p\cdot(\bar{B}\cdot \nabla)\partial_{\mathrm{h}}^{i}u_{\mathrm{h}}\vec{n}_{3}\mathrm{d}y_{\mathrm{h}},
\end{align}
where $\vec{n}_{3}$ denotes the third component of the unit outward normal vector $\vec{n}$ on $\partial\Omega$, namely, $\vec{n}_{3}=1$ on $\mathbb{R}^2\times\{1\}$ and $\vec{n}_{3}=-1$ on $\mathbb{R}^2\times\{0\}$.
Exploiting the dual estimate \eqref{11190840} and the trace estimate \eqref{37190928}, we have the following bound
\begin{align}\label{20926504111737}
&\bigg|\bar{B}_3\int_{\partial\Omega}\nabla_{\mathrm{h}}\partial_{\mathrm{h}}^{i}p\cdot(\bar{B}\cdot \nabla)\partial_{\mathrm{h}}^{i}u_{\mathrm{h}}\vec{n}_{3}\mathrm{d}y_{\mathrm{h}}\bigg|\nonumber\\[1mm]
&\lesssim
|\partial_{\mathrm{h}}^{i}p|_{H^{1/2}(\partial\Omega)}|(\bar{B}\cdot \nabla)\partial_{\mathrm{h}}^{i}u_{\mathrm{h}}|_{H^{1/2}(\partial\Omega)}
\lesssim\|\partial_{\mathrm{h}}^{i}p\|_{1}\|(\bar{B}\cdot \nabla)\partial_{\mathrm{h}}^{i}u_{\mathrm{h}}\|_{\underline{1},0}^{1/2}\|(\bar{B}\cdot \nabla)\partial_{\mathrm{h}}^{i}u_{\mathrm{h}}\|_{1}^{1/2}\nonumber\\[1mm]
&\lesssim\|\nabla p\|_{\underline{i},0}\|\nabla u_{\mathrm{h}}\|_{\underline{i},1}^{1/2}\|\nabla u_{\mathrm{h}}\|_{\underline{i+1},0}^{1/2}.
\end{align}
Inserting \eqref{20926504111737} into \eqref{2092650411539nnmm} yields
\begin{align}\label{20260411na}
\bigg|\int\partial_{\mathrm{h}}^{j+k}\partial_{3}^{i-j-k}\nabla_{\mathrm{h}} p
\cdot(\bar{B}\cdot \nabla)^2\partial_{\mathrm{h}}^{j+k}\partial_{3}^{i-j-k}u_{\mathrm{h}}\mathrm{d}y\bigg|
\lesssim\|\nabla p\|_{i}\|\nabla u_{\mathrm{h}}\|_{i+1}^{1/2}\|\nabla u_{\mathrm{h}}\|_{\underline{i+1},0}^{1/2}.
\end{align}
Collecting \eqref{202604261537a}, \eqref{202604261537b} and \eqref{20260411na} leads to
\begin{align}\label{202604261537c}
I_{11}\lesssim(i-j)\|\nabla p\|_{j+1,i-j-1}\|u_{\mathrm{h}}\|_{j,i-j+2}+\|\nabla p\|_{i}(\|\nabla u_{\mathrm{h}}\|_{i+1}^{1/2}+\|\nabla u_{\mathrm{h}}\|_2^{1/2})
\|\nabla u_{\mathrm{h}}\|_{\underline{i+1},0}^{1/2}.
\end{align}
Therefore, plugging \eqref{202604261537c} into \eqref{2092650411535} and using Young's inequality, we deduce that
\begin{align}\label{2092650411635}
&\frac{1}{2}\frac{\mathrm{d}}{\mathrm{d}t}\left(\lambda\|(\bar{B}\cdot \nabla)^2\eta_{\mathrm{h}}\|_{j,i-j}^2
+\bar{B}_3^2\| W_{\mathrm{h}}\|_{j,i-j}^2\right)
+\frac{c_1}{2}\big(\|u_{\mathrm{h}}\|_{j,i-j+2}^2+\|W_{\mathrm{h}}\|_{j,i-j}^2\big)
\nonumber\\[1.5mm]
&\lesssim
(\|u_{\mathrm{h}}\|^2_{\underline{j+1},i-j+1}+(i-j)\|\nabla p\|_{j+1,i-j-1}^2)+\|u_t\|_{i}^2+\|\mathcal{N}^1\|_{i}^2+\|(\mathcal{N}^2,\mathcal{N}^3)\|_{i+1}^2\nonumber\\[1.5mm]
&\quad\;+\|\nabla p\|_{i}(\|\nabla u_{\mathrm{h}}\|_{i+1}^{1/2}+\|\nabla u_{\mathrm{h}}\|_2^{1/2})\|\nabla u_{\mathrm{h}}\|_{\underline{i+1},0}^{1/2}.
\end{align}

{\bf{Step 2. Estimates for $(\bar{B}\cdot \nabla)^2\eta_{\mathrm{h}}$.}}
In analogy with the derivation of \eqref{2092650411635},
we apply $\partial_{\mathrm{h}}^{j+k}\partial_{3}^{i-j-k}$ to \eqref{2026195637es}$_1$ and multiply the resulting identity by $(\bar{B}\cdot \nabla)^2\partial_{\mathrm{h}}^{j+k}\partial_{3}^{i-j-k}\eta_{\mathrm{h}}$ in $L^2$, which yields
\begin{align}\label{2092650411338nmh}
&\|(\bar{B}\cdot \nabla)^2\partial_{\mathrm{h}}^{j+k}\partial_{3}^{i-j-k}\eta_{\mathrm{h}}\|_{0}^2
+(\mu+\chi)\int\partial_{\mathrm{h}}^{j+k}\partial_{3}^{2+i-j-k}u_{\mathrm{h}}\cdot(\bar{B}\cdot \nabla)^2\partial_{\mathrm{h}}^{j+k}\partial_{3}^{i-j-k}\eta_{\mathrm{h}}\mathrm{d}y
\nonumber\\[1mm]&
+2\chi\int\partial_{\mathrm{h}}^{j+k}\partial_{3}^{i-j-k}W_{\mathrm{h}}\cdot(\bar{B}\cdot \nabla)^2\partial_{\mathrm{h}}^{j+k}\partial_{3}^{i-j-k}\eta_{\mathrm{h}}\mathrm{d}y
=\int\partial_{\mathrm{h}}^{j+k}\partial_{3}^{i-j-k}\tilde{\mathcal{N}}_{\mathrm{h}}
\cdot(\bar{B}\cdot \nabla)^2\partial_{\mathrm{h}}^{j+k}\partial_{3}^{i-j-k}\eta_{\mathrm{h}}\mathrm{d}y.
\end{align}
Exploiting Young's inequality and summing over $k$, we obtain
\begin{align}\label{2092650411638nmh}
\|(\bar{B}\cdot \nabla)^2\eta_{\mathrm{h}}\|_{j,i-j}^2
&\lesssim\big(\|u_{\mathrm{h}}\|_{j,i-j+2}^2+\|W_{\mathrm{h}}\|_{j,i-j}^2
+\|u_t\|_{i}^2+\|\mathcal{N}^1\|_{i}^2\big)\nonumber\\[1mm]
&\quad+\sum_{k=0}^{i-j}\left|\int\partial_{\mathrm{h}}^{j+k}\partial_{3}^{i-j-k}\nabla_{\mathrm{h}} p
\cdot(\bar{B}\cdot \nabla)^2\partial_{\mathrm{h}}^{j+k}\partial_{3}^{i-j-k}\eta_{\mathrm{h}}\mathrm{d}y\right|:=I_{12}.
\end{align}
We now estimate the term $I_{12}$.
Specifically, for the case $i=0$, we use the interpolation inequality to obtain that
\begin{align*}
I_{12}\lesssim\|\nabla p\|_{0}\|(\bar{B}\cdot \nabla)\eta_{\mathrm{h}}\|_{1}\lesssim\|\nabla p\|_{0}\|(\bar{B}\cdot\nabla)\eta_{\mathrm{h}}\|_{2}^{1/2}\|(\bar{B}\cdot \nabla)\eta_{\mathrm{h}}\|_{0}^{1/2}.
\end{align*}
On the other hand, for the case $i\neq0$, we follow the derivation of \eqref{202604261537c} to get
\begin{align*}
I_{12}\lesssim&
(i-j)\|\nabla p\|_{j+1,i-j-1}\|(\bar{B}\cdot \nabla)^2\eta_{\mathrm{h}}\|_{j,i-j}\nonumber\\[1mm]
&+\|\nabla p\|_{i}(\|(\bar{B}\cdot \nabla)\eta_{\mathrm{h}}\|_{i+1}^{1/2}+\|(\bar{B}\cdot \nabla)\eta_{\mathrm{h}}\|_{2}^{1/2})\|\|(\bar{B}\cdot \nabla)\eta_{\mathrm{h}}\|_{\underline{i+1},0}^{1/2}.
\end{align*}
Finally, substituting the above two estimates into \eqref{2092650411638nmh} and using Young's inequality again, we arrive at
\begin{align}\label{2092650411738nmh}
\|(\bar{B}\cdot \nabla)^2\eta_{\mathrm{h}}\|_{j,i-j}^2
\lesssim&\big(\|u_{\mathrm{h}}\|_{j,i-j+2}^2+(i-j)\|\nabla p\|_{j+1,i-j+1}^2
+\|W_{\mathrm{h}}\|_{j,i-j}^2+\|u_t\|_{i}^2+\|\mathcal{N}^1\|_{i}^2\big)\nonumber\\[1mm]
&+\|\nabla p\|_{i}(\|(\bar{B}\cdot \nabla)\eta_{\mathrm{h}}\|_{i+1}^{1/2}+\|(\bar{B}\cdot \nabla)\eta_{\mathrm{h}}\|_{2}^{1/2})\|\|(\bar{B}\cdot \nabla)\eta_{\mathrm{h}}\|_{\underline{i+1},0}^{1/2}.
\end{align}

{\bf{Step 3. Estimate for $W_3$.}}
Applying $\partial_{\mathrm{h}}^{j+k}\partial_{3}^{i-j-k}$ to \eqref{2026195638es}, and taking the inner product of the resulting
identity with $\partial_{\mathrm{h}}^{j+k}\partial_{3}^{i-j-k}W_3$ in $L^2$, we obtain
\begin{align*}
&\frac{1}{2}\frac{\mathrm{d}}{\mathrm{d}t}\|\partial_{\mathrm{h}}^{j+k}\partial_{3}^{i-j-k} W_{3}\|_{0}^2
+4\chi\|\partial_{\mathrm{h}}^{j+k}\partial_{3}^{i-j-k}W_3\|_{3}^2
=\int\partial_{\mathrm{h}}^{j+k}\partial_{3}^{i-j-k}\tilde{\mathcal{M}}_{3}\cdot\partial_{\mathrm{h}}^{j+k}\partial_{3}^{i-j-k}W_{3}\mathrm{d}y.
\end{align*}
From which we get that there exists a positive constant $c_2>0$ such that
\begin{align}\label{202604262029}
&\frac{1}{2}\frac{\mathrm{d}}{\mathrm{d}t}\| W_{3}\|_{j,i-j}^2+c_2\|W_3\|_{j,i-j}^2
\lesssim\big(\|u\|_{j+1,i-j+1}^2+\|\mathcal{N}^2\|_{i+1}^2\big).
\end{align}

{\bf{Step 4. Conclusion.}}
We now conclude the proof.
Since $\partial_3\eta_3=\mathrm{div}\eta-\mathrm{div}_{\mathrm{h}}\eta_{\mathrm{h}}$, it is easy to check that
\begin{align}\label{2092650411339}
\|(\bar{B}\cdot \nabla)^2\eta_3\|_{j,i-j}
\lesssim\|(\bar{B}\cdot \nabla)^2\eta_{\mathrm{h}}\|_{j,i-j}+\|(\bar{B}\cdot \nabla)\eta\|_{\underline{i+1},0}+\|(\bar{B}\cdot \nabla)\mathrm{div}\eta\|_{i}.
\end{align}
Similarly, it holds that
\begin{align}\label{209265041136}
\|u_3\|_{j,i-j+2}
\lesssim\|u_{\mathrm{h}}\|_{j,i-j+2}+\|\nabla u\|_{\underline{i+1},0}+\|\mathrm{div}u\|_{i+1}.\qquad
\end{align}
Combining the above two estimates with \eqref{2092650411635}, \eqref{2092650411738nmh} and \eqref{202604262029}, it follows that there exists a positive constant $c_3>0$, such that
\begin{align}\label{202604262036}
&\frac{1}{2}\frac{\mathrm{d}}{\mathrm{d}t}\left(\lambda\|(\bar{B}\cdot \nabla)^2\eta_{\mathrm{h}}\|_{j,i-j}^2
+\bar{B}_3^2\| W_{\mathrm{h}}\|_{j,i-j}^2+\| W_3\|_{j,i-j}^2\right)
+c_3\big(\|u\|_{j,2+i-j}^2+\|((\bar{B}\cdot \nabla)^2\eta,W)\|_{j,i-j}^2\big)
\nonumber\\[1mm]
&\lesssim
(\|u\|_{\underline{j+1},i-j+1}^2+(i-j)\|\nabla p\|_{j+1,i-j-1}^2)
+\|u_t\|_{i}^2+\|\mathcal{N}^1\|_{i}^2+\|(\mathrm{div}\eta,\mathcal{N}^2,\mathcal{N}^3)\|_{i+1}^2\nonumber\\[1mm]
&\quad+\|\nabla p\|_{i}\big(\|(\nabla u_{\mathrm{h}},(\bar{B}\cdot \nabla)\eta_{\mathrm{h}})\|_{i+1}^{1/2}
+\|(\nabla u_{\mathrm{h}},(\bar{B}\cdot \nabla)\eta_{\mathrm{h}})\|_{2}^{1/2}\big)
\|(\nabla u_{\mathrm{h}},(\bar{B}\cdot \nabla)\eta_{\mathrm{h}})\|_{\underline{i+1},0}^{1/2}\nonumber\\[1mm]
&\quad
+\|(\nabla u,(\bar{B}\cdot \nabla)\eta)\|_{\underline{i+1},0}^2.
\end{align}

For the case where $j<i$.
By substituting $\partial_t^{j}$ with $\partial_\mathrm{h}^{j+1}$ in \eqref{n0101nn928nes}
and invoking the regularity theory for the Stokes problem (see Lemma \ref{10220835}), analogously to \eqref{112922471039},  we can have
\begin{align*}
&\|u\|_{j+1,i-j+1}^2+\|\nabla p\|_{j+1,i-j-1}^2\nonumber\\[1mm]
&\lesssim\|u\|_{j+1,0}^2+\|(u_t,(\bar{B}\cdot \nabla)^2\eta, W)\|_{j+1,i-j-1}^2+\|\mathcal{N}^1\|_{i}^2+\|\mathcal{N}^3\|_{i+1}^2.
\end{align*}
Consequently, for the case $j<i$, we refine \eqref{202604262036} to be
\begin{align}\label{202604262051}
&\frac{1}{2}\frac{\mathrm{d}}{\mathrm{d}t}\left(\lambda\|(\bar{B}\cdot \nabla)^2\eta_{\mathrm{h}}\|_{j,i-j}^2
+\bar{B}_3^2\| W_{\mathrm{h}}\|_{j,i-j}^2+\| W_3\|_{j,i-j}^2\right)
+c_3\big(\|u\|_{j,i-j+2}^2+\|((\bar{B}\cdot \nabla)^2\eta,W)\|_{j,i-j}^2\big)
\nonumber\\[1mm]
&\lesssim
\|u\|_{j+1,i-j+1}^2+\|((\bar{B}\cdot \nabla)^2\eta,W)\|_{j+1,i-j-1}^2
+\|u_t\|_{i}^2+\|\mathcal{N}^1\|_{i}^2+\|(\mathrm{div}\eta,\mathcal{N}^2,\mathcal{N}^3)\|_{i+1}^2\nonumber\\[1mm]
&\quad+\|\nabla p\|_{i}\big(\|(\nabla u_{\mathrm{h}},(\bar{B}\cdot \nabla)\eta_{\mathrm{h}})\|_{i+1}^{1/2}
+\|(\nabla u_{\mathrm{h}},(\bar{B}\cdot \nabla)\eta_{\mathrm{h}})\|_{2}^{1/2}\big)
\|(\nabla u_{\mathrm{h}},(\bar{B}\cdot \nabla)\eta_{\mathrm{h}})\|_{\underline{i+1},0}^{1/2}\nonumber\\[1mm]
&\quad+\|(\nabla u,(\bar{B}\cdot \nabla)\eta)\|_{\underline{i+1},0}^2+\|u\|_{i+1}^2.
\end{align}

On the other hand, in the case $i=j$, estimate \eqref{202604262036} reduces to
\begin{align}\label{202604262036nmnm}
&\frac{1}{2}\frac{\mathrm{d}}{\mathrm{d}t}\left(\lambda\|(\bar{B}\cdot \nabla)^2\eta_{\mathrm{h}}\|_{i,0}^2
+\bar{B}_3^2\| W_{\mathrm{h}}\|_{i,0}^2+\| W_3\|_{i,0}^2\right)
+c_3\big(\|u\|_{i,2}^2+\|((\bar{B}\cdot \nabla)^2\eta,W)\|_{i,0}^2\big)
\nonumber\\[1mm]
&\lesssim\|u\|_{\underline{i+1},1}^2+\|u_t\|_{i}^2+\|\mathcal{N}^1\|_{i}^2+\|(\mathrm{div}\eta,\mathcal{N}^2,\mathcal{N}^3)\|_{i+1}^2\nonumber\\[1mm]
&\quad+\|\nabla p\|_{i}\big(\|(\nabla u_{\mathrm{h}},(\bar{B}\cdot \nabla)\eta_{\mathrm{h}})\|_{i+1}^{1/2}
+\|(\nabla u_{\mathrm{h}},(\bar{B}\cdot \nabla)\eta_{\mathrm{h}})\|_{2}^{1/2}\big)
\|(\nabla u_{\mathrm{h}},(\bar{B}\cdot \nabla)\eta_{\mathrm{h}})\|_{\underline{i+1},0}^{1/2}\nonumber\\[1mm]
&\quad
+\|(\nabla u,(\bar{B}\cdot \nabla)\eta)\|_{\underline{i+1},0}^2.
\end{align}
Therefore, in view of the recursive inequality \eqref{202604262051} on $j$
and \eqref{202604262036nmnm}, along with the interpolation and Friedrichs inequalities applied to $\|u\|_{i+1}^2$ and $\|u\|_{\underline{i+1},1}^2$,
we conclude that there exist positive constants $h_{i,j}$ such that
\begin{align}\label{202604262051n}
&\frac{1}{2}\frac{\mathrm{d}}{\mathrm{d}t}\mathcal{H}_{i}(\eta,W)
+c_3\big(\|u\|_{i+2}^2+\|((\bar{B}\cdot \nabla)^2\eta,W)\|_{i}^2\big)
\nonumber\\[1mm]
&\lesssim
\|(\nabla u,(\bar{B}\cdot \nabla)\eta)\|_{\underline{i+1},0}^2+\|u_t\|_{i}^2+\|\mathcal{N}^1\|_{i}^2+\|(\mathrm{div}\eta,\mathcal{N}^2,\mathcal{N}^3)\|_{i+1}^2\nonumber\\[1mm]
&\quad+\|\nabla p\|_{i}\big(\|(\nabla u_{\mathrm{h}},(\bar{B}\cdot \nabla)\eta_{\mathrm{h}})\|_{i+1}^{1/2}
+\|(\nabla u_{\mathrm{h}},(\bar{B}\cdot \nabla)\eta_{\mathrm{h}})\|_{2}^{1/2}\big)
\|(\nabla u_{\mathrm{h}},(\bar{B}\cdot \nabla)\eta_{\mathrm{h}})\|_{\underline{i+1},0}^{1/2},
\end{align}
where $\mathcal{H}_{i}(\eta,W)$ is defined by
\begin{align*}
\mathcal{H}_{i}(\eta,W):=\sum_{j=0}^{i}h_{i,j}\left(\lambda\|(\bar{B}\cdot \nabla)^2\eta_{\mathrm{h}}\|_{j,i-j}^2
+\bar{B}_3^2\| W_{\mathrm{h}}\|_{j,i-j}^2+\| W_3\|_{j,i-j}^2\right).
\end{align*}
It is clear that $\mathcal{H}_{i}(\eta,W)$ satisfies \eqref{202604261336}.
Finally, invoking the preliminary estimates \eqref{11202054} and \eqref{06011711jumpv}--\eqref{11210836}, we immediately arrive at \eqref{202604102028}.
This completes the proof.
\hfill$\Box$
\end{pf}

Next we turn to the estimate for $\mathrm{div}w$.
Following the same procedure as in \eqref{202604101338}, by applying the divergence operator to \eqref{202609221247n}$_3$ and using the identity $\mathrm{div}\nabla\times u=0$,
we obtain
\begin{align}\label{202604101338nm}
\partial_t\mathrm{div}w+4\chi\mathrm{div}w=\mathrm{div}\mathcal{N}^{2}.
\end{align}
We have the following estimates for $\mathrm{div}w$.
\begin{lem}\label{divwnormal}
Under assumption \eqref{aprpioses} with sufficiently small $\delta$, it holds that, 
\begin{align}\label{20260411nd}
&\frac{\mathrm{d}}{\mathrm{d}t}\|\mathrm{div}w\|_{i}^2+c\|\mathrm{div}w\|_{i}^2
\lesssim
\begin{cases}
\sqrt{\mathcal{E}_{H}}{\mathcal{D}}_{L}
& \;\;\mathrm{ for }\;\; i=1;\\[1mm]
\sqrt{\mathcal{E}_{H}}{\mathcal{D}}_{H}+\|(\eta,u)\|_{3}\|\eta\|_{6}^2
& \;\;\mathrm{ for }\;\; i=4.
\end{cases}
\end{align}
\end{lem}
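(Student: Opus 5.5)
The plan is a direct energy estimate on the damped transport-free ODE \eqref{202604101338nm}. For any multi-index $\partial^\alpha$ with $|\alpha|\leqslant i$ (mixing horizontal and normal derivatives freely, since no boundary condition is needed for $w$ and no integration by parts in $y_3$ will be performed), we apply $\partial^\alpha$ to \eqref{202604101338nm} and take the $L^2$ inner product with $\partial^\alpha\mathrm{div}w$. Summing over $|\alpha|\leqslant i$, this gives
\begin{align*}
\frac{1}{2}\frac{\mathrm{d}}{\mathrm{d}t}\|\mathrm{div}w\|_i^2+4\chi\|\mathrm{div}w\|_i^2=\sum_{|\alpha|\leqslant i}\int\partial^\alpha\mathrm{div}\mathcal{N}^2\,\partial^\alpha\mathrm{div}w\,\mathrm{d}y
\lesssim\|\mathcal{N}^2\|_{i+1}\|\mathrm{div}w\|_{i}.
\end{align*}
By Young's inequality we absorb half of the damping, which leaves $\frac{\mathrm{d}}{\mathrm{d}t}\|\mathrm{div}w\|_i^2+c\|\mathrm{div}w\|_i^2\lesssim\|\mathcal{N}^2\|_{i+1}\|\mathrm{div}w\|_i$. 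We keep this product form rather than squaring, so that the nonlinear smallness factor is visible.

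\textbf{Lower-order case $i=1$.} Here \eqref{11210836} with $j=2$ gives $\|\mathcal{N}^2\|_2\lesssim\|\eta\|_3\|u\|_3$. Since $\|\eta\|_3\leqslant\sqrt{\mathcal{E}_H}$ and $\|u\|_3,\|w\|_2\leqslant\sqrt{\mathcal{D}_L}$, the right-hand side is bounded by $\sqrt{\mathcal{E}_H}\mathcal{D}_L$.

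\textbf{Higher-order case $i=4$.} Now \eqref{11210836} with $j=5$ gives
\[
\|\mathcal{N}^2\|_5\lesssim\|\eta\|_3\|u\|_6+\|u\|_3\|\eta\|_6 .
\]
The first piece times $\|\mathrm{div}w\|_4\leqslant\|w\|_5$ is bounded by $\|\eta\|_3\|u\|_6\|w\|_5\lesssim\sqrt{\mathcal{E}_H}\mathcal{D}_H$, because both $\|u\|_6^2$ and $\|w\|_5^2$ belong to $\mathcal{D}_H$. The second piece gives $\|u\|_3\|\eta\|_6\|w\|_5$. Applying Young's inequality in the form $ab\leqslant (a^2+b^2)/2$ with $a=\|\eta\|_6$ and $b=\|w\|_5$, we get
\[
\|u\|_3\|\eta\|_6\|w\|_5\lesssim \|u\|_3\|\eta\|_6^2+\|u\|_3\|w\|_5^2\lesssim \|(\eta,u)\|_3\|\eta\|_6^2+\sqrt{\mathcal{E}_H}\mathcal{D}_H .
\]
This produces exactly the extra term $\|(\eta,u)\|_3\|\eta\|_6^2$ appearing in \eqref{20260411nd}.

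\textbf{Main obstacle.} The only delicate point is the top-order loss in the case $i=4$. The quantity $\mathrm{div}\mathcal{N}^2$ costs six derivatives of $\eta$ through $\tilde{\mathcal{A}}$, and $\|\eta\|_6$ is not part of $\mathcal{D}_H$. This is why the term $\|\eta\|_6$ is kept separately and paired with the small factor $\|u\|_3$, as described above. Apart from this, the argument only uses the product estimate already encoded in \eqref{11210836} and requires no boundary terms.
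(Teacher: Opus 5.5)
Your argument is correct and is essentially the paper's proof: both are direct estimates on the damped ODE \eqref{202604101338nm} closed by \eqref{11210836}. The only difference is that the paper takes $\|\cdot\|_i^2$ of both sides, so its right-hand side is $\|\mathcal{N}^2\|_{i+1}^2$, reduced to the stated form via $\|(\eta,u)\|_3\lesssim\delta<1$, and it gets $\|\partial_t\mathrm{div}w\|_i^2$ for free on the left; you instead test with $\partial^\alpha\mathrm{div}w$ and split the trilinear term $\|u\|_3\|\eta\|_6\|w\|_5$ by Young's inequality. One cosmetic point: since you keep the product form, no Young's inequality is needed to reach $\frac{\mathrm{d}}{\mathrm{d}t}\|\mathrm{div}w\|_i^2+c\|\mathrm{div}w\|_i^2$ --- just multiply by $2$ and take $c=8\chi$.
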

\begin{pf}
Taking the norm $\|\cdot\|_i$ of both sides of \eqref{202604101338nm}, we can obtain that
\begin{align}\label{20260411ne}
&2\chi\frac{\mathrm{d}}{\mathrm{d}t}\|\mathrm{div}w\|_{i}^2+\|4\chi\mathrm{div}w\|_{i}^2+\|\partial_t\mathrm{div}w\|_{i}^2
=\|\mathrm{div}\mathcal{N}^{2}\|_{i}^2.
\end{align}
Thanks to \eqref{11210836}, we can estimate that
$$\|\mathrm{div}\mathcal{N}^{2}\|_{i}^2\lesssim\|\mathcal{N}^{2}\|_{i+1}^2
\lesssim
\begin{cases}
\sqrt{\mathcal{E}_{H}}{\mathcal{D}}_{L}
& \;\hbox{ for }\; i=1;\\[1mm]
\sqrt{\mathcal{E}_{H}}{\mathcal{D}}_{H}+\|(\eta,u)\|_{3}\|\eta\|_{6}^2
& \;\hbox{ for }\; i=4.
\end{cases}$$
Putting it into \eqref{20260411ne} then yields \eqref{20260411nd}. This completes the proof.
\hfill$\Box$
\end{pf}

\section{Proof of Theorem \ref{thm1}} \label{2025thm1}

\subsection{Energy inequality}
With the energy estimates established in Section \ref{energy estimates},
we now are ready to build the lower-order, higher-order and highest-order energy inequalities.

\begin{pro}
\label{pro:0501nm}
Under assumption \eqref{aprpioses} with sufficiently small $\delta$, there exist energy functionals $\tilde{\mathcal{E}}_{L}$, $\tilde{\mathcal{E}}_{H}$ and $\overline{\|\eta\|}^2_6$
which are equivalent to $\mathcal{E}_{L}$, $\mathcal{E}_{H}$ and $\|\eta\|_6^2$, respectively, such that
\begin{align}
&\label{202108061542}
\frac{\mathrm{d}}{\mathrm{d}t} \tilde{\mathcal{E}}_{L}+\mathcal{D}_{L}\leqslant0,\\[1mm]
&\label{202108061042}
\frac{\mathrm{d}}{\mathrm{d}t} \tilde{\mathcal{E}}_{H}+\mathcal{D}_{H}\lesssim\sqrt{\mathcal{D}_{L}}\|\eta\|_{6}^2,\\[1mm]
&\label{202108061043}
\frac{\mathrm{d}}{\mathrm{d}t}\overline{\|\eta\|}^2_6+\|\eta\|_{6}^2
\lesssim{\mathcal{E}}_{H}+\mathcal{D}_{H} \quad\mathrm{ on }\;\;(0,T].\qquad
\end{align}
\end{pro}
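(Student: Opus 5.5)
The plan is to build each of the three inequalities as a weighted linear combination of the estimates of Section \ref{energy estimates}. The weights are chosen hierarchically, so that every linear right-hand-side term produced by one estimate is absorbed by the dissipation of an estimate with a much larger weight. Nonlinear terms of the form $\sqrt{\mathcal{E}_H}\mathcal{D}_L$ or $\sqrt{\mathcal{E}_H}\mathcal{D}_H$ are then absorbed using the smallness assumption \eqref{aprpioses}.

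\textbf{Lower-order inequality \eqref{202108061542}.}
1. Sum Lemma \ref{lem:21092401} over $0\leqslant i\leqslant 3$ with a large weight $K_1$. This gives the dissipation $\|(\nabla u,w)\|_{\underline{3},0}^2$.
2. Add Lemma \ref{lem:26092401} for $0\leqslant i\leqslant3$ with weight $1$. Its right-hand side $\|(\nabla\partial_{\mathrm h}^i u,\partial_{\mathrm h}^iw)\|_0^2$ is absorbed by step 1, and it supplies $\|(\bar B\cdot\nabla)\eta\|_{\underline{3},0}^2$. The cross terms $\int \partial_{\mathrm h}^iu\cdot\partial_{\mathrm h}^i\eta$ and $\int\partial_{\mathrm h}^iw\cdot\nabla\times\partial_{\mathrm h}^i\eta$ are dominated once $K_1$ is large. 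The Poincaré-type inequality gives $\|\nabla\partial_{\mathrm h}^i\eta\|_0\lesssim\|(\bar B\cdot\nabla)\partial_{\mathrm h}^i\eta\|_0$ since $\bar B_3\ne0$, so the combined functional is positive and controls $\|\nabla\eta\|_{\underline3,0}^2$.
3. Add \eqref{11300905} with $j=1$, which gives $\|u_t\|_1^2$ in the dissipation.
4. Add Lemma \ref{uwnormal} and Lemma \ref{divwnormal} with $i=1$, multiplied by a small weight $\epsilon_1$. The dangerous term $\|\nabla p\|_1\|(\cdot)\|_2^{1/2}\|(\cdot)\|_{\underline2,0}^{1/2}$ is split by Young's inequality into a small multiple of $(\|\nabla p\|_1^2+\|u\|_3^2+\|(\bar B\cdot\nabla)^2\eta\|_1^2)$ plus $C\|((\bar B\cdot\nabla)\eta,\nabla u)\|_{\underline2,0}^2$. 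The latter is absorbed by steps 1--2.
5. Add Lemma \ref{lem:11292030es} with $i=1$, multiplied by a still smaller weight $\epsilon_2$. This provides $\|(\eta,u)\|_3^2+\|\nabla p\|_1^2$. Its right-hand side $\|(u_t,\nabla\times w)\|_1^2$ is bounded by steps 3--4; here $\nabla\times w=W$ and $\|w\|_2\lesssim\|w\|_0+\|(W,\mathrm{div}w)\|_1$. The $\|\nabla p\|_1^2$ needed in step 4 comes back from here, which forces the order $\epsilon_2$ large relative to the small Young constant in step 4, closing a loop.
6. Add \eqref{202604271340} to recover $\|\nabla p\|_0$ and $\|u\|_2$ in the energy.

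Collecting everything, $\tilde{\mathcal E}_L$ is equivalent to $\mathcal E_L$, and the remainder $\sqrt{\mathcal E_H}\mathcal D_L$ is absorbed for small $\delta$.

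\textbf{Higher-order inequality \eqref{202108061042}.} The same scheme applies with $i$ up to $5$ in the horizontal lemmas, $j=1,2$ in \eqref{11300905}, the inequality \eqref{11300910}, and $i=4$ in Lemmas \ref{uwnormal}, \ref{divwnormal} and \ref{lem:11292030es}. The higher time-derivative norms appearing in $\mathcal D_H$ are then recovered through \eqref{202604271303}--\eqref{2026042712136}.

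The remainder terms $\|(\eta,u)\|_3\|\eta\|_6^2$ and $\|(\nabla w,\nabla p)\|_1\|\eta\|_6^2$ are all bounded by $\sqrt{\mathcal D_L}\|\eta\|_6^2$, since $\mathcal D_L$ controls $\|(\eta,u)\|_3+\|w\|_2+\|\nabla p\|_1$. The term $\|u_t\|_2^2$ in \eqref{11300910} is absorbed by giving \eqref{11300910} a small weight.

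The main obstacle is the absence of $\|\eta\|_6$ from the dissipation. This shows up in two places:
\begin{itemize}
\item The Stokes estimate (Lemma \ref{lem:11292030es}, $i=4$) can only be used in the form $\|u\|_6+\|\nabla p\|_4$ via \eqref{202604271303}, fed by Lemma \ref{uwnormal}.
\item All product terms must be arranged so that $\|\eta\|_6$ appears only quadratically, multiplied by a lower-order dissipation factor.
\end{itemize}
I would check each term carefully against \eqref{06011711jumpv} to confirm this.

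\textbf{Highest-order inequality \eqref{202108061043}.} This is simply Lemma \ref{lem:11292030es} with $i=4$, taking $\overline{\|\eta\|}_6^2$ as defined there.
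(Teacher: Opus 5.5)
Your construction of \eqref{202108061542} follows the paper's proof. It uses the same ingredients in the same weight hierarchy: horizontal and temporal estimates with the largest weight, then Lemma~\ref{uwnormal}, then the $\varphi$-Stokes estimate of Lemma~\ref{lem:11292030es} with the smallest, with Young's inequality shifting the cost of the pressure term onto the horizontal dissipation. Your \eqref{202108061043} is indeed Lemma~\ref{lem:11292030es} with $i=4$.

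The gap is in \eqref{202108061042}, where you list Lemma~\ref{lem:11292030es} with $i=4$ as an ingredient. That cannot work. By \eqref{11300824}, its right-hand side contains the linear term $\|\eta\|_{5,1}^2$, which includes six horizontal derivatives of $\eta$. $\mathcal{D}_H$ does not control this (only $\|(\bar B\cdot\nabla)\eta\|_{\underline{5},0}$ is dissipated), and it carries an $O(1)$ constant, so no choice of weight absorbs it. This is exactly why the paper keeps that estimate as the separate inequality \eqref{202108061043}.

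Your remark that the $i=4$ Stokes bound ``can only be used'' through \eqref{202604271303} conflates two different estimates. \eqref{202604271303} is the Stokes estimate for $(u,p)$: it yields $\|u\|_6^2+\|\nabla p\|_4^2$, but no damping of $\eta$. As a result, your plan never says where $\|\eta\|_5^2$ and $\|(\bar B\cdot\nabla)\eta\|_5^2$ come from. Both belong to $\mathcal{E}_H$ and to $\mathcal{D}_H$. They are also needed to split, by Young's inequality, the term $\|\nabla p\|_4\|((\bar B\cdot\nabla)\eta,\nabla u)\|_5^{1/2}\|((\bar B\cdot\nabla)\eta,\nabla u)\|_{\underline{5},0}^{1/2}$ from Lemma~\ref{uwnormal}.

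The paper uses Lemma~\ref{lem:11292030es} with $i=3$, namely $\frac{\mathrm{d}}{\mathrm{d}t}\overline{\|\eta\|}_5^2+\|(\eta,u)\|_5^2+\|\nabla p\|_3^2\lesssim\|(\eta,u)\|_{\underline{4},1}^2+\|(u_t,\nabla\times w)\|_3^2$. Its right side is absorbable via \eqref{202605012250n}, $\|W\|_4$ from Lemma~\ref{uwnormal}, and $\|u_t\|_4$ from \eqref{202604271347}. The paper then obtains $\|(\bar B\cdot\nabla)\eta\|_5$ from \eqref{202605012250}. Alternatively, since $\bar B_3\neq0$, writing $\partial_3=\bar B_3^{-1}(\bar B\cdot\nabla-\bar B_{\mathrm h}\cdot\nabla_{\mathrm h})$ and using \eqref{poincaretype} gives $\|\eta\|_5\lesssim\|(\bar B\cdot\nabla)^2\eta\|_3+\|(\bar B\cdot\nabla)\eta\|_{\underline{5},0}$. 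Either way, this step has to be supplied.

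Two smaller errors are harmless once corrected:
\begin{itemize}
\item \eqref{poincaretype} gives $\|\partial_{\mathrm h}^i\eta\|_0\lesssim\|(\bar B\cdot\nabla)\partial_{\mathrm h}^i\eta\|_0$, not a bound on $\|\nabla\partial_{\mathrm h}^i\eta\|_0$. The latter is false: it would give $\mathcal{E}_L\lesssim\mathcal{D}_L$ and hence exponential decay. You do not need it, because $\mu\|\nabla\partial_{\mathrm h}^i\eta\|_0^2$ is already in your functional.
\item The bound $\|w\|_2\lesssim\|w\|_0+\|(W,\mathrm{div}w)\|_1$ fails in the strip, since $w$ has no boundary condition; harmonic gradients of high horizontal frequency are counterexamples. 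The correct bound has $\|w\|_{\underline{2},0}$ in place of $\|w\|_0$, and your step~1 dissipation provides that term.
\end{itemize}
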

\begin{pf}
To begin with,  it follows from Lemmas \ref{lem:21092401}--\ref{badiseqinM} that there exists two suitably large constants $c_4$ and $c_5$ such that
\begin{align}
&\label{202109241633}
\frac{\mathrm{d}}{\mathrm{d}t}\bar{\mathcal{E}}_{L}+c\bar{\mathcal{D}}_{L}\lesssim\sqrt{\mathcal{E}_{H}}{\mathcal{D}}_{L},\\
&\label{202109241635}
\frac{\mathrm{d}}{\mathrm{d}t}\bar{\mathcal{E}}_{H}+c\bar{\mathcal{D}}_{H}\lesssim\|u_{t}\|_{2}^2+\sqrt{\mathcal{E}_{H}}{\mathcal{D}}_{H}
+(\|(\eta,u)\|_3+\|(\nabla w,\nabla p)\|_{1})\|\eta\|_{6}^2,
\end{align}
where
\begin{align*}
&\bar{\mathcal{E}}_{L}:=
\sum_{i=0}^3\left(2\int\partial_{\mathrm{h}}^{i}u\cdot\partial_{\mathrm{h}}^{i}\eta\mathrm{d}y
+\int\partial_{\mathrm{h}}^{i}w\cdot\partial_{\mathrm{h}}^{i}\nabla\times\eta\mathrm{d}y
+\mu\|\nabla\partial_\mathrm{h}^{i} \eta\|_0^2+\chi\|\mathrm{div}\partial_\mathrm{h}^{i} \eta\|_0^2\right)\\
&\qquad\;\;+c_4\sum_{i=0}^3\big(\|(\partial_\mathrm{h}^{i} u,\partial_\mathrm{h}^{i} w)\|^2_0+\lambda\|(\bar{B}\cdot\nabla)\partial_\mathrm{h}^{i}\eta\|_{0}^2\big)
+\left(\|(\partial_{t}u,\partial_{t}w)\|_{0}^2+\lambda\|(\bar{B}\cdot\nabla)u\|_{0}^2\right),\\[1mm]
&\bar{\mathcal{E}}_{H}:=
\sum_{i=0}^5\left(2\int\partial_{\mathrm{h}}^{i}u\cdot\partial_{\mathrm{h}}^{i}\eta\mathrm{d}y
+\int\partial_{\mathrm{h}}^{i}w\cdot\partial_{\mathrm{h}}^{i}\nabla\times\eta\mathrm{d}y
+\mu\|\nabla\partial_\mathrm{h}^{i} \eta\|_0^2+\chi\|\mathrm{div}\partial_\mathrm{h}^{i} \eta\|_0^2\right)\\
&\qquad\;\;+c_4\sum_{i=0}^5\big(\|(\partial_\mathrm{h}^{i} u,\partial_\mathrm{h}^{i} w)\|^2_0+\lambda\|(\bar{B}\cdot\nabla)\partial_\mathrm{h}^{i}\eta\|_{0}^2\big)
+c_5\sum_{j=1}^2\left(\|(\partial_{t}^ju,\partial_{t}^jw)\|_{0}^2+\lambda\|(\bar{B}\cdot\nabla)\partial_{t}^{j-1}u\|_{0}^2\right)\\[1mm]
&\qquad\;\;+\left((\mu+\chi)\|\nabla_{\mathcal{A}}\partial_t^2u\|_0^2+4\chi\|\partial_{t}^2w\|_0^2-4\chi\int\nabla_{\mathcal{A}}\times \partial_{t}^2u\cdot\partial_{t}^2w\mathrm{d}y\right)
\end{align*}
and
\begin{align*}
&\bar{\mathcal{D}}_{L}:=\|(\bar{B}\cdot\nabla)\eta\|_{\underline{3},0}^2+c_4\|(\nabla u,w)\|_{\underline{3},0}^2
+\|(\nabla_{\mathcal{A}}u_t,w_t)\|_0^2,\\[1mm]
&\bar{\mathcal{D}}_{H}:=\|(\bar{B}\cdot\nabla)\eta\|_{\underline{5},0}^2+c_4\|(\nabla u,w)\|_{\underline{5},0}^2
+\|(\partial_t^3u,\partial_t^3w)\|^2_{0}+c_5\sum_{j=1}^2\|(\nabla_{\mathcal{A}}\partial_{t}^ju,\partial_{t}^jw)\|_0^2.\qquad
\end{align*}

Moreover, combining with \eqref{202604271129}, \eqref{202604271303},  \eqref{202604102028}, and \eqref{20260411nd}, we deduce that there exist two suitably large constants $c_6$ and $c_7$ such that
\begin{align}
&\label{2021080615420501}
\frac{\mathrm{d}}{\mathrm{d}t} \mathcal{E}_{1}+c\mathcal{D}_{1}
\lesssim\|(\eta,u)\|_{\underline{2},1}^2+\|u_t\|_{1}^2+\|\nabla p\|_{1}\|(\eta,u)\|_{3}^{1/2}\|((\bar{B}\cdot \nabla)\eta,\nabla u)\|_{\underline{2},0}^{1/2}\nonumber\\[1mm]
&\qquad\qquad\qquad+\sqrt{\mathcal{E}_{H}}{\mathcal{D}}_{L},\\[1mm]
&\label{2021080610420501}
\frac{\mathrm{d}}{\mathrm{d}t} \mathcal{E}_{2}+c\mathcal{D}_{2}\lesssim\|(\eta,u)\|_{\underline{4},1}^2+\|u_{t}\|_{4}^2
+\|\nabla p\|_{4}\|((\bar{B}\cdot \nabla)\eta,\nabla u)\|_{5}^{1/2}\|((\bar{B}\cdot \nabla)\eta,\nabla u)\|_{\underline{5},0}^{1/2}\nonumber\\[1mm]
&\qquad\qquad\qquad
+\|((\bar{B}\cdot \nabla)\eta, \nabla u)\|_{\underline{5},0}^2+\sqrt{\mathcal{E}_{H}}{\mathcal{D}}_{H}+\sqrt{\mathcal{D}_{L}}\|\eta\|_{6}^2,
\end{align}
where
\begin{align*}
&\mathcal{E}_{1}:=\overline{\|\eta\|}_{3}^2+c_6\mathcal{H}_{1}(\eta,W)+\|\mathrm{div}w\|_{1}^2,\\[1mm]
&\mathcal{E}_{2}:=\overline{\|\eta\|}_{5}^2+c_7\mathcal{H}_{4}(\eta,W)+\|\mathrm{div}w\|_{4}^2,\\[1mm]
&\mathcal{D}_{1}:=\|(\eta,u)\|_{3}^2+\|\nabla p\|_{1}^2+c_6\|((\bar{B}\cdot \nabla)^2\eta, W)\|_{1}^2+\|\mathrm{div}w\|_{1}^2,\\[1mm]
&\mathcal{D}_{2}:=\|\eta\|_{5}^2+\|u\|_{6}^2+\|\nabla p\|_{4}^2+c_7\|((\bar{B}\cdot \nabla)^2\eta, W)\|_{4}^2+\|\mathrm{div}w\|_{4}^2.
\end{align*}

Thanks to \eqref{poincaretype}, it is easy to calculate that
\begin{align}
&\label{202605012250}
\|(\bar{B}\cdot \nabla)\eta\|_{5}\lesssim\|\eta\|_{5}+\|(\bar{B}\cdot \nabla)^2\eta\|_{4}+\|(\bar{B}\cdot \nabla)\eta\|_{\underline{5},0},\\[1mm]
&\label{202605012250n}
\|\eta\|_{\underline{i},1}\lesssim\|(\bar{B}\cdot \nabla)\eta\|_{\underline{i+1},0}\quad\mbox{for any}\;i\geqslant0.
\end{align}
Hence, exploiting the interpolation inequality and \eqref{202605012250}--\eqref{202605012250n},
we can deduce from \eqref{202604271347}, \eqref{2026042712136} (with $k=1$), and \eqref{2021080610420501} that
there exists a suitably large constant $c_8$ such that
\begin{align}
&\label{202605012247}
\frac{\mathrm{d}}{\mathrm{d}t} \mathcal{E}_{2}+c\tilde{\mathcal{D}}_{2}\lesssim\sum_{j=0}^{3}\|\partial_t^{j}u\|_{0}^2
+\|\nabla p\|_{4}\|((\bar{B}\cdot \nabla)\eta,\nabla u)\|_{5}^{1/2}\|((\bar{B}\cdot \nabla)\eta,\nabla u)\|_{\underline{5},0}^{1/2}\nonumber\\[1mm]
&\qquad\qquad\qquad
+\|((\bar{B}\cdot \nabla)\eta, \nabla u)\|_{\underline{5},0}^2+\sqrt{\mathcal{E}_{H}}{\mathcal{D}}_{H}+\sqrt{\mathcal{D}_{L}}\|\eta\|_{6}^2,
\end{align}
where $\tilde{\mathcal{D}}_{2}$ is given by
\begin{align*}
\tilde{\mathcal{D}}_{2}:=\mathcal{D}_{2}+\sum_{j=1}^{2}\|\partial_t^jw\|_{7-2j}^2+c_8\sum_{j=1}^{2}\left(\|\partial_t^ju\|_{6-2j}^2+\|\nabla\partial_t^j p\|_{4-2j}^2\right)+\|(\bar{B}\cdot \nabla)\eta\|_{5}^2.
\end{align*}
Consequently, choosing $c_5$ to be sufficiently large,
applying \eqref{20190614fdsa1957}, \eqref{202605012250n}, Young's and interpolation inequalities, we can further deduce from \eqref{202109241633}--\eqref{2021080615420501} and \eqref{202605012247} that
there exist two suitably large constants $c_9$ and $c_{10}$ such that
\begin{align}
&\label{2021080615420501nm}
\frac{\mathrm{d}}{\mathrm{d}t} \mathcal{E}_{3}+c\mathcal{D}_{3}
\lesssim\sqrt{\mathcal{E}_{H}}{\mathcal{D}}_{L},\\[1mm]
&\label{2021080610420501nm}
\frac{\mathrm{d}}{\mathrm{d}t} \mathcal{E}_{4}+c\mathcal{D}_{4}\lesssim\sqrt{\mathcal{E}_{H}}{\mathcal{D}}_{H}+\sqrt{\mathcal{D}_{L}}\|\eta\|_{6}^2,
\end{align}
where
\begin{align*}
&\mathcal{E}_{3}:=\mathcal{E}_1+c_9\bar{\mathcal{E}}_{L},\quad\mathcal{D}_{3}:=\mathcal{D}_{1}+c_9\bar{\mathcal{D}}_{L}\\[1mm]
&\mathcal{E}_{4}:=\mathcal{E}_2+c_{10}\bar{\mathcal{E}}_{H},\quad\mathcal{D}_{4}:=\tilde{\mathcal{D}}_{2}+c_{10}\bar{\mathcal{D}}_{H}.
\end{align*}
By virtue of \eqref{20190614fdsa1957}, \eqref{202209210921nn}, \eqref{202604271630}, \eqref{202604271340}, \eqref{202604271345}, \eqref{2026042712136} (with $k=0$), \eqref{friedrich}--\eqref{poincaretype}, and the fact that
$$\|\nabla w\|_{i}\lesssim\|\partial_{\mathrm{h}} w\|_{i}+\|\partial_3 w\|_{i}
\lesssim\|w\|_{\underline{i+1},0}+\|\nabla\times w\|_{i}+\|\mathrm{div}w\|_{i}\;\;\mbox{for}\;i\geqslant0,$$
it is easily checked that under the assumption \eqref{aprpioses} with sufficiently small $\delta$, the functionals
$\mathcal{E}_{3}$, $\mathcal{E}_{4}$
are equivalent to $\mathcal{E}_L$, $\mathcal{E}_H$, and $\mathcal{D}_{3}$, $\mathcal{D}_{4}$ are equivalent to $\mathcal{D}_L$, $\mathcal{D}_H$, respectively.
Consequently, the inequalities \eqref{202108061542}--\eqref{202108061042} are direct consequences of \eqref{2021080615420501nm}--\eqref{2021080610420501nm}.
Additionally, the inequality \eqref{202108061043} directly follows from \eqref{202604271129} with $i=4$.
This completes the proof of Proposition \ref{pro:0501nm}.
\hfill$\Box$
\end{pf}

\subsection{Equivalence form of $\mathcal{E}_{H}$}
\begin{lem}\label{lem:12281620}
Under assumption \eqref{aprpioses} with sufficiently small $\delta$, it holds that
\begin{align}\label{12281622}
\mathcal{E}_{H}\;\;\mbox{is equivalent to}\;\;\|\eta\|_{\underline{5},1}^2+\|((\bar{B}\cdot\nabla)\eta, \eta,u,w)\|_5^2.
\end{align}
\end{lem}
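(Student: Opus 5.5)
We must show two inequalities. Denote $\mathcal{F}:=\|\eta\|_{\underline{5},1}^2+\|((\bar{B}\cdot\nabla)\eta,\eta,u,w)\|_5^2$.

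\emph{The direction $\mathcal{F}\lesssim\mathcal{E}_H$.} This is essentially immediate from the definition of $\mathcal{E}_H$. Every term of $\mathcal{F}$ except $\|\eta\|_{\underline{5},1}^2$ appears explicitly in $\mathcal{E}_H$. For that remaining term, $\|\eta\|_{\underline{5},1}\lesssim\|\eta\|_6$, and we do not want to use $\|\eta\|_6$. Instead we note $\|\eta\|_{j,1}\lesssim\|\eta\|_{j,0}+\|\nabla\eta\|_{j,0}$. Both pieces are bounded by $\|\eta\|_5+\|\nabla\eta\|_{\underline{5},0}$, which is contained in $\mathcal{E}_H$. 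Hence $\mathcal{F}\lesssim\mathcal{E}_H$.

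\emph{The direction $\mathcal{E}_H\lesssim\mathcal{F}$.} We bound the remaining pieces of $\mathcal{E}_H$ one at a time.
\begin{itemize}
\item $\|(\nabla\eta,u)\|_{\underline{5},0}^2$ is clearly bounded by $\|\eta\|_{\underline{5},1}^2+\|u\|_5^2$.
\item The time derivatives of $u$, $p$ and $w$ are handled through the equations.
\item From \eqref{202609221247n}$_3$, $\|w_t\|_4\lesssim\|w\|_4+\|u\|_5+\|\mathcal{N}^2\|_4$. By \eqref{11210836}, $\|\mathcal{N}^2\|_4\lesssim\|\eta\|_3\|u\|_5+\|u\|_3\|\eta\|_5\lesssim\mathcal{F}^{1/2}$ under \eqref{aprpioses}.
\item For $u_t$ and $\nabla p$ we use the Stokes estimate \eqref{112922471039} with $j=0$, $i=4$. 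This gives $\|\nabla p\|_3\lesssim\|u\|_0+\|(u_t,\nabla\times w,(\bar{B}\cdot\nabla)^2\eta)\|_3+\|\mathcal{N}^1\|_3+\|\mathcal{N}^3\|_4$.
\end{itemize}

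This Stokes bound contains $u_t$ on the right-hand side. To break the circularity, we instead treat $u_t$ directly from the momentum equation, i.e. from the estimate \eqref{202604271345}. That estimate bounds $\|u\|_5+\|u_t\|_3+\|\nabla p\|_3+\|\nabla p_t\|_1$ by
\[
\|((\bar{B}\cdot\nabla)^2\eta,\nabla w)\|_3^2+\|u_{tt}\|_1^2+\|(u,u_t)\|_0^2+\mathcal{E}_H^2 .
\]
The first term is controlled by $\|(\bar{B}\cdot\nabla)\eta\|_4^2+\|w\|_4^2\le\mathcal{F}$. The term $\mathcal{E}_H^2\lesssim\delta^2\mathcal{E}_H$ is absorbed at the end. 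The low-order pieces $\|u_t\|_0$ and $\|u_{tt}\|_1$ remain to be controlled.

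\emph{Main obstacle: the highest time derivatives $\|u_{tt}\|_1$ and $\|w_{tt}\|_2$.} These are not directly given by the equation in terms of $\mathcal{F}$ without loss. I would differentiate the momentum equation in time, so that $u_{tt}$ is expressed through $(\bar{B}\cdot\nabla)^2u$, $\Delta u_t$, $\nabla p_t$, $\nabla\times w_t$ and $\partial_t\mathcal{N}^1$.

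The difficulty is the pressure and the $\Delta u_t$ term. To deal with it, I would apply the Leray-type projection argument implicit in \eqref{n0101nn928nes} with $j=1,2$. First, estimate $\|u_t\|_0$ by testing the momentum equation against $u_t$; this gives $\|u_t\|_0\lesssim\|u\|_2+\|\nabla p\|_0+\|\eta\|_2+\|w\|_1$. Second, control $\|\nabla p\|_0$ by the weak (dual) formulation of the Stokes problem, using $-\Delta u$ tested against gradients. Third, iterate once more in time: bound $\|u_{tt}\|_1$ via \eqref{202604271347}-type Stokes bounds in terms of $\|u_t\|_3$, $\|\nabla p_t\|_1$ and $\|w_t\|_3$, which have already been estimated. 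Fourth, bound $\|w_{tt}\|_2$ through $\partial_t$ of \eqref{202609221247n}$_3$ together with \eqref{10202100m}.

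Finally, collect all nonlinear contributions; each is $\lesssim\delta\,\mathcal{E}_H$. For $\delta$ small they are absorbed, which yields $\mathcal{E}_H\lesssim\mathcal{F}$.
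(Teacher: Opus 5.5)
Your direction $\mathcal{F}\lesssim\mathcal{E}_H$ is fine, and so are the bound for $\|w_t\|_4$ and, once $\|u_t\|_3$ is known, the bound for $\|w_{tt}\|_2$. The gap is in recovering $u_t$, $u_{tt}$, $\nabla p$ and $\nabla p_t$: that part of the argument is circular.

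The estimates \eqref{202604271345} and \eqref{202604271347} are Stokes estimates. A Stokes estimate for $(\partial_t^ju,\partial_t^jp)$ always has $\partial_t^{j+1}u$ as a source. In the paper they are used only where the top time derivative is already part of the functional ($\|\nabla_{\mathcal{A}}u_{tt}\|_0$ in $\bar{\mathcal{E}}_H$, $\|\partial_t^3u\|_0$ in $\bar{\mathcal{D}}_H$). Here $\mathcal{F}$ contains no time derivatives, so they run in the wrong direction.

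Concretely, \eqref{202604271345} gives $X:=\|u_t\|_3^2+\|\nabla p\|_3^2+\|\nabla p_t\|_1^2\lesssim\mathcal{F}+\|u_{tt}\|_1^2+\delta^2\mathcal{E}_H$. Your third step amounts to reading $u_{tt}$ off the time-differentiated momentum equation, and gives $\|u_{tt}\|_1^2\lesssim\|\nabla p_t\|_1^2+\|u_t\|_3^2+\|u\|_3^2+\|w_t\|_2^2+\cdots\lesssim X+\mathcal{F}$. Together these yield only $X\lesssim\mathcal{F}+CX$, with no small factor to absorb. Moreover, \eqref{202604271347} cannot enter an energy-level argument at all, since its right-hand side contains $\|\partial_t^3u\|_0$ and $\mathcal{E}_H\mathcal{D}_H$. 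You noticed the circularity in \eqref{112922471039}; switching to \eqref{202604271345} only moves it up one time derivative.

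What is missing is a pressure estimate in which $u_t$ (resp.\ $u_{tt}$) does not appear at leading order, i.e.\ the Neumann problem (Helmholtz projection). You name a Leray-type projection, and your second step uses exactly this at the $L^2$ level, but it has to be carried out at full regularity.

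Write \eqref{202609221247n}$_2$ as $u_t+\nabla p=f$ with $f:=(\mu+\chi)\Delta u+\lambda(\bar B\cdot\nabla)^2\eta+2\chi\nabla\times w+\mathcal{N}^1$. We have $u_t|_{\partial\Omega}=0$ and $\mathrm{div}\,u_t=\partial_t\mathcal{N}^3=-\mathrm{div}\,\partial_t(\tilde{\mathcal{A}}^{\top}u)$, with $\partial_t(\tilde{\mathcal{A}}^{\top}u)|_{\partial\Omega}=0$. Testing with $\nabla\phi$ therefore gives $\int\nabla p\cdot\nabla\phi\,\mathrm{d}y=\int\big(f+\partial_t(\tilde{\mathcal{A}}^{\top}u)\big)\cdot\nabla\phi\,\mathrm{d}y$ for all $\phi$. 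Neumann regularity then yields $\|\nabla p\|_3\lesssim\|f\|_3+\|\partial_t(\tilde{\mathcal{A}}^{\top}u)\|_3\lesssim\mathcal{F}^{1/2}+\delta(\|u_t\|_3+\|\nabla p\|_3)$, and after absorbing, $\|u_t\|_3\le\|f\|_3+\|\nabla p\|_3$.

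Repeating this for the time-differentiated equation gives $\|\nabla p_t\|_1\lesssim\|\partial_t f\|_1+\cdots$, where $\|\partial_t f\|_1\lesssim\|u_t\|_3+\|u\|_3+\|w_t\|_2+\cdots$. Finally $\|u_{tt}\|_1\le\|\partial_t f\|_1+\|\nabla p_t\|_1$, and only then does your $w_{tt}$ bound close. For comparison, the paper gives no argument of its own here and simply defers to Lemma 3.6 of the cited Jiang--Jiang paper.
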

\begin{pf}
Please refer to \cite[Lemma 3.6]{JFJSJMFMOSERT} for the proof.
\hfill$\Box$
\end{pf}

\subsection{A priori estimate}
With Proposition \ref{pro:0501nm} and Lemma \ref{lem:12281620} in hand,
we are now in a position to establish the \emph{a priori} estimate.
Making use of \eqref{202108061043}, we find that
$$\begin{aligned}
\|\eta\|_6^2&\lesssim\|\eta^0\|_6^2e^{-t}+\int_0^te^{-(t-\tau)}\left(\mathcal{E}_{H}+\mathcal{D}_{H}\right)(\tau)\mathrm{d}\tau\\
&\lesssim\|\eta^0\|_6^2e^{-t}+\sup_{\tau\in[0, t]} \mathcal{E}_{H}(\tau)\int_0^te^{-(t-\tau)}\mathrm{d}\tau
+\int_0^t\mathcal{D}_{H}(\tau)\mathrm{d}\tau\\[1mm]
&\lesssim\|\eta^0\|_6^2e^{-t}+\mathcal{G}_3(t),
\end{aligned}$$
which gives
\begin{align}\label{202108071121}
\mathcal{G}_1(t)\lesssim\|\eta^0\|_6^2+\mathcal{G}_3(t).
\end{align}

Multiplying \eqref{202108061043} by $(1+t)^{-3/2}$, we obtain
$$
\frac{\mathrm{d}}{\mathrm{d}t}\frac{\overline{\|\eta\|}_6^2}{(1+t)^{3/2}}+\frac{3}{2}\frac{\overline{\|\eta\|}_6^2}{(1+t)^{5/2}}
+\frac{\|\eta\|_6^2}{(1+t)^{3/2}}
\lesssim\frac{\mathcal{E}_{H}}{(1+t)^{3/2}}+\frac{\mathcal{D}_{H}}{(1+t)^{3/2}},
$$
which yields
\begin{align}\label{202108071132}
\mathcal{G}_2\lesssim\|\eta^0\|_6^2+\mathcal{G}_3(t).
\end{align}

On the other hand, an integration of \eqref{202108061042} with respect to time $t$ gives  rise to
\begin{align}\label{202309121114}
\mathcal{G}_3(t)&\lesssim\mathcal{E}_{H}(0)+\int_0^{t}\sqrt{\mathcal{D}_{L}}\|\eta(\tau)\|_{6}^2\mathrm{d}\tau,\nonumber\\[1mm]
&\lesssim\mathcal{E}_{H}(0)
+\mathcal{G}_{1}(t)\left(\int_0^t{(1+\tau)^{3/2}}{\mathcal{D}_{L}}\mathrm{d}\tau\right)^{1/2}\left(\int_0^t{(1+\tau)^{-3/2}}\mathrm{d}\tau\right)^{1/2}.
\end{align}

Let
$$\begin{aligned}
&\mathcal{G}_5(t):=\mathcal{G}_1(t)+\sup_{\tau\in[0, t]} \mathcal{E}_{H}(\tau)+\mathcal{G}_4(t).
\end{aligned}$$
From now on, we further assume $\sqrt{\mathcal{G}_5(T)}\leqslant \delta$, which is a stronger requirement than  \eqref{aprpioses}.
Then, we can use the above inequality and \eqref{12281622} with $t=0$ to obtain
\begin{align}\label{202108071147}
\mathcal{G}_3(t)&\lesssim\mathcal{E}_{H}(0)+\delta\mathcal{G}_{1}(t)
\lesssim\|\eta^0\|_{6}^2+\|(u^0,w^0)\|_{5}^2+\delta\mathcal{G}_{1}(t),
\end{align}
Hence, it follows from \eqref{202108071121}--\eqref{202108071147} that
\begin{align}\label{202110101533}
\sum_{i=1}^3\mathcal{G}_{i}(t)\lesssim\|\eta^0\|_{6}^2+\|(u^0,w^0)\|_{5}^2:=\mathcal{I}^0.
\end{align}

Finally, we show the time decay behavior of $\mathcal{G}_4(t)$.
Note that $\mathcal{E}_{L}$ can be controlled by $\mathcal{D}_{L}$
except for the term $\|\eta\|_{\underline{3},1}^2$ in $\mathcal{E}_{L}$.
However, we can use the interpolation inequality to obtain
$$\|\eta\|_4\lesssim\|\eta\|_3^{\frac{2}{3}}\|\eta\|_6^{\frac{1}{3}}.$$
Moreover, it follows from \eqref{202108071121} and \eqref{202108071147} that
$$\mathcal{E}_{L}+\|\eta\|_6^2\lesssim\tilde{\mathcal{E}}_{L}+\|\eta\|_6^2
\lesssim\mathcal{I}^0.$$
The combination of the above two inequalities gives rise to
$$\tilde{\mathcal{E}}_{L}\lesssim
\left(\mathcal{D}_{L}\right)^{\frac{2}{3}}\left(\mathcal{E}_{L}+\|\eta\|_6^2\right)^{\frac{1}{3}}
\lesssim\left(\mathcal{D}_{L}\right)^{\frac{2}{3}}\left(\mathcal{I}^0\right)^{\frac{1}{3}}.$$
Plugging the above estimate into \eqref{202108061542}, we obtain
$$\frac{\mathrm{d}}{\mathrm{d}t}\tilde{\mathcal{E}}_{L}+c
\frac{(\tilde{\mathcal{E}}_{L})^{3/2}}{(\mathcal{I}^0)^{1/2}}\leqslant0,
$$
which yields
$${\mathcal{E}}_{L}\lesssim\tilde{\mathcal{E}}_{L}
\lesssim\frac{{\mathcal{E}}_{L}(0)}{\left((\mathcal{I}^0/\mathcal{E}_{L}(0))^{1/2}+t/2\right)^2}
\lesssim\frac{{\mathcal{E}}_{L}(0)}{\left(1+t\right)^2}.$$
Consequently, it follows that
\begin{align}
\label{2021092616143}
&\sup_{0\leqslant\tau< t}(1+\tau)^{2}{\mathcal{E}}_{L}(\tau)\lesssim{\mathcal{E}}_{L}(0).
\end{align}

In addition, note that
$$\begin{aligned}
&\frac{\mathrm{d}}{\mathrm{d}t}\left((1+t)^{\frac{3}{2}}\tilde{\mathcal{E}}_{L}\right)
+(1+t)^{\frac{3}{2}}\mathcal{D}_{L}=(1+t)^{\frac{3}{2}}\left(\frac{\mathrm{d}}{\mathrm{d}t}\tilde{\mathcal{E}}_{L}
+\mathcal{D}_{L}\right)+\frac{3}{2}\tilde{\mathcal{E}}_{L}(1+t)^{\frac{1}{2}},
\end{aligned}$$
which, together with \eqref{202108061542} and \eqref{2021092616143}, leads to
\begin{align*}
&\frac{\mathrm{d}}{\mathrm{d}t}\left((1+t)^{\frac{3}{2}}\tilde{\mathcal{E}}_{L}\right)
+(1+t)^{\frac{3}{2}}\mathcal{D}_{L}
\lesssim\tilde{\mathcal{E}}_{L}(0)(1+t)^{-\frac{3}{2}}.
\end{align*}
Hence, we arrive at
\begin{align}
\label{202109261645}
\mathcal{G}_4(t)
\lesssim{\mathcal{E}}_{L}(0)\lesssim\mathcal{I}^0.
\end{align}

Summing up the above four estimates for $\mathcal{G}_{i}$, we conclude that
\begin{align}\label{202108071232}
\mathcal{G}(t):=\sum_{i=1}^4\mathcal{G}_{i}\lesssim
\left(\|\eta^0\|_6^2+\|(u^0,w^0)\|_5^2\right).
\end{align}
Consequently, by virtue of \eqref{202108071232},
we have proved the following \emph{a priori} estimate, which combines with the well-posedness result and a continuity argument,
yields Theorem \ref{thm1}.
\begin{pro}\label{pro:0807}
Let $(\eta,u,w)$ be a solution of the problem \eqref{202609221247} with an associated pressure $p$ (up to a constant).
Then there exists a sufficiently small $\delta$, such that $(\eta,u,w,p)$ enjoys the following estimate
\begin{align}\label{202604281950}
\mathcal{G}(T)\lesssim\left(\|\eta^0\|_6^2+\|(u^0,w^0)\|_5^2\right),
\end{align}
provided that $\sqrt{\mathcal{G}_5(T)}\leqslant \delta$ for some $T>0$.
\end{pro}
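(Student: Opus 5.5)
The proof of Proposition \ref{pro:0807} is a bootstrap that assembles what has just been derived, closed by a smallness argument. Throughout, I will work under the hypothesis $\sqrt{\mathcal{G}_5(T)}\leqslant\delta$. This hypothesis implies \eqref{aprpioses}, so Proposition \ref{pro:0501nm} and Lemma \ref{lem:12281620} are available on $(0,T]$.

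\textbf{Step 1: $\mathcal{G}_1$ and $\mathcal{G}_2$.} Integrating \eqref{202108061043} against the weights $e^{-(t-\tau)}$ and $(1+t)^{-3/2}$ gives \eqref{202108071121} and \eqref{202108071132}. Thus both $\mathcal{G}_1$ and $\mathcal{G}_2$ are bounded by $\|\eta^0\|_6^2+\mathcal{G}_3(t)$.

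\textbf{Step 2: $\mathcal{G}_3$.} I integrate \eqref{202108061042} in time and use Cauchy--Schwarz with the weight $(1+\tau)^{3/2}$ on the forcing term, obtaining
\begin{equation*}
\mathcal{G}_3(t)\lesssim \mathcal{E}_H(0)+\mathcal{G}_1(t)\,\mathcal{G}_4(t)^{1/2}.
\end{equation*}
This uses $\int_0^\infty(1+\tau)^{-3/2}\,\mathrm{d}\tau<\infty$. Since $\mathcal{G}_4(t)\leqslant\mathcal{G}_5(T)\leqslant\delta^2$, the last term is at most $\delta\,\mathcal{G}_1(t)$. Lemma \ref{lem:12281620} at $t=0$ bounds $\mathcal{E}_H(0)$ by $\mathcal{I}^0:=\|\eta^0\|_6^2+\|(u^0,w^0)\|_5^2$. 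Inserting Step 1 and taking $\delta$ small, the term $\delta\,\mathcal{G}_1$ is absorbed, which yields $\sum_{i=1}^3\mathcal{G}_i\lesssim\mathcal{I}^0$.

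\textbf{Step 3: decay and $\mathcal{G}_4$ (the main obstacle).} The difficulty is that $\mathcal{D}_L$ does not control $\mathcal{E}_L$. The part of $\mathcal{E}_L$ not dominated by $\mathcal{D}_L$ is essentially $\|\eta\|_{\underline{3},1}^2$ together with $\|\eta\|_4$-type norms. For these I would use the interpolation $\|\eta\|_4\lesssim\|\eta\|_3^{2/3}\|\eta\|_6^{1/3}$, together with \eqref{202605012250n} to pass from tangential norms of $\eta$ to norms of $(\bar B\cdot\nabla)\eta$, which do appear in $\mathcal{D}_L$. Combined with the bound $\|\eta\|_6^2+\mathcal{E}_L\lesssim\mathcal{I}^0$ from Step 2, this should give
\begin{equation*}
\tilde{\mathcal{E}}_L\lesssim \mathcal{D}_L^{2/3}(\mathcal{I}^0)^{1/3}.
\end{equation*}
Feeding this into \eqref{202108061542} produces the differential inequality $\frac{\mathrm{d}}{\mathrm{d}t}\tilde{\mathcal{E}}_L+c(\mathcal{I}^0)^{-1/2}\tilde{\mathcal{E}}_L^{3/2}\leqslant0$. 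Solving it gives $\mathcal{E}_L\lesssim\mathcal{E}_L(0)(1+t)^{-2}$, using $\mathcal{E}_L(0)\lesssim\mathcal{I}^0$. Next I multiply \eqref{202108061542} by $(1+t)^{3/2}$. The commutator term is $\frac32(1+t)^{1/2}\tilde{\mathcal{E}}_L\lesssim\mathcal{E}_L(0)(1+t)^{-3/2}$, which is integrable in time. Integrating therefore bounds $\int_0^t(1+\tau)^{3/2}\mathcal{D}_L\,\mathrm{d}\tau$ by $\mathcal{I}^0$, and with the decay of $\mathcal{E}_L$ this gives $\mathcal{G}_4\lesssim\mathcal{I}^0$.

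The delicate point is checking that the interpolation really covers every component of $\mathcal{E}_L$ missing from $\mathcal{D}_L$, namely $\|\nabla\eta\|_{\underline3,0}$ and $\|\eta\|_3$. I expect it does, because $\mathcal{D}_L$ already contains $\|\eta\|_3^2$, so only the exponent bookkeeping needs care.

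\textbf{Conclusion.} Summing Steps 1--3 gives \eqref{202604281950}, with constants independent of $T$. Since \eqref{202604281950} bounds $\mathcal{G}_5(T)$ by $C\mathcal{I}^0\leqslant C\delta_0^2$, the hypothesis $\sqrt{\mathcal{G}_5(T)}\leqslant\delta$ is improved when the initial data are small enough. This is what allows the continuity argument to close.
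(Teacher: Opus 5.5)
Your proposal is correct and follows the paper's proof essentially step for step. You bound $\mathcal{G}_1,\mathcal{G}_2$ via \eqref{202108061043}, get $\mathcal{G}_3$ by integrating \eqref{202108061042} with the weighted Cauchy--Schwarz and absorbing $\delta\mathcal{G}_1$, use the interpolation $\|\eta\|_4\lesssim\|\eta\|_3^{2/3}\|\eta\|_6^{1/3}$ to obtain $\tilde{\mathcal{E}}_L\lesssim\mathcal{D}_L^{2/3}(\mathcal{I}^0)^{1/3}$, and finish with the resulting ODE and the $(1+t)^{3/2}$-weighted integration for $\mathcal{G}_4$; three small precisions (the first shared with the paper) do not affect the conclusion $\mathcal{G}_4\lesssim\mathcal{I}^0$: solving that ODE actually gives $\mathcal{E}_L\lesssim\mathcal{I}^0(1+t)^{-2}$ rather than $\mathcal{E}_L(0)(1+t)^{-2}$, the only part of $\mathcal{E}_L$ genuinely missing from $\mathcal{D}_L$ is the fourth-order piece $\|\partial_{\mathrm{h}}^3\nabla\eta\|_0$ of $\|\nabla\eta\|_{\underline{3},0}$ (since $\|\eta\|_3^2$ is already in $\mathcal{D}_L$), and \eqref{202605012250n} is not needed for it.
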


\vspace{3mm}
\noindent\textbf{Acknowledgements.}
The research of Youyi Zhao was supported by NSFC (12371233 and 12401289),
the Natural Science Foundation of Fujian Province of China (2024J08029),
and the Research Foundation of Fuzhou University (XRC-24050).

\vspace{5mm}
\noindent\textbf{Conflict of Interest.}
The author states that there is no conflict of interest.

\appendix
\section{Analysis tools}\label{sec:09}
\renewcommand\thesection{A}
This Appendix is aims to provide some mathematical analysis tools, which have been used in the previous sections.
It should be noted that in this appendix we still adapt the simplified mathematical notations in Section \ref{Main result}.
For the sake of simplicity, we still use the notation $a\lesssim b$ means that $a\leqslant cb$ for some constant $c>0$,
where the positive constant $c$ may
depend on the physical parameters, but dose not depend on the initial data or time, and may vary from line to line.

\begin{lem}\label{lem:10220826}
Embedding inequalities (see \cite[Theorem 4.12]{ARAJJFF}):
\begin{align}
&\label{embed237}
\|f\|_{L^p}\lesssim \| f\|_{1}\quad\mathrm{for}\;\;2\leqslant p\leqslant6,\\[1mm]
&\label{embed2}
\|f\|_{C^0(\overline{\Omega})}=\|f\|_{L^\infty}\lesssim \| f\|_{2}.
\end{align}
\end{lem}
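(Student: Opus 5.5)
The plan is to reduce both inequalities to the corresponding whole-space Sobolev embeddings on $\mathbb{R}^3$ by means of a bounded extension operator. The strip $\Omega=\mathbb{R}^2\times(0,1)$ has flat boundary, so an explicit extension is available and we do not need to appeal to the general cone-condition theory. First, we construct $E:H^k(\Omega)\to H^k(\mathbb{R}^3)$ for $k=1,2$ as follows. Let $\theta\in C^\infty(\mathbb{R})$ be a cutoff with $\theta=1$ on $[0,1/2]$ and $\theta=0$ on $[3/4,\infty)$. Split $f=\theta(y_3)f+(1-\theta(y_3))f$. The first piece is supported near $\{y_3=0\}$, and we extend it to $y_3<0$ by the higher-order reflection
\begin{align*}
\tilde f(y_{\mathrm{h}},y_3)=3f(y_{\mathrm{h}},-y_3)-2f(y_{\mathrm{h}},-2y_3),\qquad y_3<0.
\end{align*}
Its coefficients are chosen so that the function and its first normal derivative match across $y_3=0$, which is what $k\leqslant 2$ requires. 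The second piece is treated symmetrically across $\{y_3=1\}$. Each reflected piece is then cut off again so that it vanishes for $|y_3|$ large.

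The key verification is that $\|Ef\|_{H^k(\mathbb{R}^3)}\lesssim\|f\|_k$ with a constant independent of $f$. We check this first for $f\in C^\infty(\overline{\Omega})\cap H^k$. By the matching conditions, the weak derivatives of $Ef$ up to order $k$ carry no singular part on the interfaces. Moreover, each reflected term is a composition with a dilation in $y_3$ alone, so its $L^2$ norm is controlled by that of $f$. We then pass to general $f\in H^k(\Omega)$ by density of $C^\infty(\overline{\Omega})\cap H^k$ in $H^k(\Omega)$. This density is itself obtained by horizontal mollification combined with the same reflection, and it is where the flatness of the boundary is used. Since the constants are invariant under horizontal translations, the unboundedness of $\Omega$ in $y_{\mathrm{h}}$ causes no difficulty.

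With $E$ in hand, \eqref{embed237} follows from the Gagliardo--Nirenberg--Sobolev inequality $\|g\|_{L^6(\mathbb{R}^3)}\lesssim\|\nabla g\|_{L^2(\mathbb{R}^3)}$ applied to $g=Ef$. This gives the endpoint $p=6$, and the range $2\leqslant p\leqslant 6$ then follows from Hölder interpolation between $L^2$ and $L^6$. For \eqref{embed2} we use the Fourier transform on $\mathbb{R}^3$:
\begin{align*}
\|\widehat{Ef}\|_{L^1}\leqslant\|(1+|\xi|^2)^{-1}\|_{L^2(\mathbb{R}^3)}\,\|(1+|\xi|^2)\widehat{Ef}\|_{L^2}\lesssim\|Ef\|_{H^2(\mathbb{R}^3)},
\end{align*}
where the first factor is finite because $\int_{\mathbb{R}^3}(1+|\xi|^2)^{-2}\,\mathrm{d}\xi<\infty$. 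Hence $Ef$ has a bounded continuous representative, and so does its restriction $f$. Since $Ef$ is continuous on $\mathbb{R}^3$, the restriction extends continuously to $\overline{\Omega}$ with $\|f\|_{C^0(\overline{\Omega})}=\|f\|_{L^\infty}\lesssim\|f\|_2$.

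The main obstacle is verifying that the reflection extension is bounded on $H^2$, that is, that no interface terms appear in the second derivatives. This reduces to the two matching conditions: $3-2=1$ gives continuity of the function, and $-3+4=1$ gives continuity of the first normal derivative. Both can be checked by hand. Alternatively, this entire step can be replaced by citing \cite[Theorem 4.12]{ARAJJFF}, since $\Omega$ satisfies the uniform cone condition.
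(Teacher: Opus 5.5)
Your proof is correct, but it is not the route the paper takes. The paper gives no argument and simply invokes \cite[Theorem 4.12]{ARAJJFF}, the Sobolev embedding theorem for domains with the cone condition; continuity up to $\overline{\Omega}$ comes from the strong local Lipschitz part of that theorem. You instead exploit the flatness of $\partial\Omega$. The two-term reflection $3f(y_{\mathrm{h}},-y_3)-2f(y_{\mathrm{h}},-2y_3)$ matches value and normal derivative, so it gives an explicit bounded extension $E:H^k(\Omega)\to H^k(\mathbb{R}^3)$ for $k=1,2$. After that you only need the whole-space Gagliardo--Nirenberg--Sobolev inequality with H\"older interpolation, and the bound $\|\widehat{Ef}\|_{L^1}\lesssim\|Ef\|_{H^2(\mathbb{R}^3)}$.

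The citation is immediate and covers all $W^{m,p}$ embeddings on general domains. Your construction is self-contained and acts only in $y_3$, so it is transparent that the constants are uniform in the unbounded horizontal directions. It also gives continuity up to $\overline{\Omega}$ for free, because the inverse Fourier transform of an $L^1$ function is uniformly continuous on $\mathbb{R}^3$.

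The density step needs more care. Horizontal mollification produces no smoothness in $y_3$, and reflecting a general $H^k$ function before $E$ is known to be bounded would be circular. Either of two arguments repairs this:
\begin{itemize}
\item A horizontally mollified $f$ lies in $H^k(0,1;H^s(\mathbb{R}^2))$ for every $s$, so it is $C^{k-1}$ up to $\partial\Omega$. The absence of interface terms can then be checked directly for these functions and passed to the limit using the uniform bound.
\item Use the standard normal translation $f(y_{\mathrm{h}},y_3+\varepsilon)$ near $\{y_3=0\}$ (segment condition), followed by mollification.
\end{itemize}
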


\begin{lem}\label{10220830}
Friedrichs inequality (see \cite[Theorem 1.42]{NASII04}):
\begin{align}
\label{friedrich}
\|f\|_{0}\lesssim \|\nabla f\|_{0}\quad\mathrm{for}\;f\in H^1_0.
\end{align}
\end{lem}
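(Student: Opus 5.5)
The Friedrichs inequality on the strip follows from a one-dimensional Poincaré estimate in the normal direction $y_3$, integrated over the horizontal variables. Since $\Omega=\mathbb{R}^2\times(0,1)$ has bounded width in $y_3$ and $f\in H^1_0$ vanishes on both boundary planes $y_3=0$ and $y_3=1$, no horizontal compactness is needed. Only the normal derivative $\partial_3 f$ enters, which gives the slightly stronger bound $\|f\|_0\lesssim\|\partial_3 f\|_0\le\|\nabla f\|_0$.

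First I would prove the estimate for $f\in C_c^\infty(\Omega)$, which is dense in $H^1_0$. For fixed $y_{\mathrm{h}}\in\mathbb{R}^2$ and $y_3\in(0,1)$, the fundamental theorem of calculus together with $f(y_{\mathrm{h}},0)=0$ gives
\[
f(y_{\mathrm{h}},y_3)=\int_0^{y_3}\partial_3 f(y_{\mathrm{h}},s)\,\mathrm{d}s .
\]
Cauchy--Schwarz then yields
\[
|f(y_{\mathrm{h}},y_3)|^2\le y_3\int_0^1|\partial_3 f(y_{\mathrm{h}},s)|^2\,\mathrm{d}s .
\]
Integrating in $y_3$ over $(0,1)$ and then in $y_{\mathrm{h}}$ over $\mathbb{R}^2$ gives
\[
\|f\|_0^2\le\tfrac12\|\partial_3 f\|_0^2\le\tfrac12\|\nabla f\|_0^2 ,
\]
with Fubini justified because $f$ is smooth and compactly supported.

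Next I would pass to general $f\in H^1_0$ by density. The paper defines $H^1_0$ through vanishing trace, and on this Lipschitz (flat) domain that space coincides with the $H^1$-closure of $C_c^\infty(\Omega)$. If this identification needs justification, I would sketch it by horizontal truncation with cutoffs $\phi(y_{\mathrm{h}}/R)$ and then a standard vertical approximation near the flat boundary. Taking $f_n\to f$ in $H^1$, both sides of the inequality converge, so the inequality holds for $f$.

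No step is a real obstacle. The only point that needs care is the density of $C_c^\infty(\Omega)$ in the trace-zero space on an unbounded domain, which is standard; the paper itself cites this via \cite[Theorem 1.42]{NASII04}.
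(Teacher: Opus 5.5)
Your proof is correct and matches the paper. The paper gives no argument of its own and only cites the Friedrichs inequality for domains bounded in one direction (Novotn\'y--Stra\v{s}kraba). The standard proof of that result is exactly your slicing argument in $y_3$: the fundamental theorem of calculus plus Cauchy--Schwarz gives $\|f\|_0^2\le\frac12\|\partial_3 f\|_0^2$, followed by density.
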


\begin{lem}\label{10220830nm}
Poincar\'e-type inequality (see \cite[Lemma A.4]{WYTIVNMI}):
\begin{align}
\label{poincaretype}
\|f\|_{0}\lesssim \|(\bar{B}\cdot\nabla) f\|_{0}\quad\mathrm{for}\;f\in H^1_0.
\end{align}
\end{lem}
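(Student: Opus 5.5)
The plan is to straighten the direction $\bar{B}$ by a volume-preserving shear, which turns the estimate into the one-dimensional Poincaré inequality in the normal variable. The only structural input is $\bar{B}_3\neq0$: every line in direction $\bar{B}$ crosses the strip $\Omega=\mathbb{R}^2\times(0,1)$ transversally, and a function vanishing on $\partial\Omega$ can be recovered by integrating $(\bar{B}\cdot\nabla)f$ along that line from the boundary.

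\emph{Reduction to smooth functions.} Both sides of the inequality are continuous with respect to the $H^1$ norm. Functions in $C_c^\infty(\Omega)$ are dense in $H_0^1$, so it suffices to prove the estimate for $f\in C_c^\infty(\Omega)$.

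\emph{Change of variables.} Introduce $(z_{\mathrm{h}},s)\in\mathbb{R}^2\times(0,1)$ through
\[
y_{\mathrm{h}}=z_{\mathrm{h}}+\frac{s}{\bar{B}_3}\bar{B}_{\mathrm{h}},\qquad y_3=s .
\]
This map is a bijection of $\Omega$ onto itself with Jacobian determinant $1$, so $L^2$ norms are unchanged. Set $g(z_{\mathrm{h}},s):=f\big(z_{\mathrm{h}}+s\bar{B}_{\mathrm{h}}/\bar{B}_3,\,s\big)$. The chain rule gives
\[
\partial_s g=\bar{B}_3^{-1}\,\big((\bar{B}\cdot\nabla)f\big)\circ\Phi ,
\]
where $\Phi$ denotes the change of variables above. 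Moreover $g(z_{\mathrm{h}},0)=0$ for every $z_{\mathrm{h}}$, since $f$ vanishes near $y_3=0$.

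\emph{One-dimensional Poincaré and integration.} For each fixed $z_{\mathrm{h}}$ we write $g(z_{\mathrm{h}},s)=\int_0^s\partial_s g(z_{\mathrm{h}},\sigma)\,\mathrm{d}\sigma$. The Cauchy--Schwarz inequality on $(0,1)$ then yields
\[
\int_0^1|g(z_{\mathrm{h}},s)|^2\,\mathrm{d}s\le\int_0^1|\partial_s g(z_{\mathrm{h}},\sigma)|^2\,\mathrm{d}\sigma .
\]
Integrating in $z_{\mathrm{h}}$ over $\mathbb{R}^2$ and returning to the original variables with the unit Jacobian gives
\[
\|f\|_0^2\le\bar{B}_3^{-2}\,\|(\bar{B}\cdot\nabla)f\|_0^2 ,
\]
which is \eqref{poincaretype} with constant $|\bar{B}_3|^{-1}$.

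None of these steps is genuinely difficult. The only point needing care is the density reduction, that is, checking that the boundary trace condition allows us to approximate by compactly supported smooth functions. The estimate itself passes to the limit because both sides are controlled by the $H^1$ norm.
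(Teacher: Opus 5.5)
Your proof is correct and complete. The unit-Jacobian shear $\Phi$ turns $(\bar{B}\cdot\nabla)$ into $\bar{B}_3\partial_s$ along lines that cross the strip transversally, and the one-dimensional Poincar\'e inequality from $s=0$ then gives the estimate with the explicit constant $|\bar{B}_3|^{-1}$ (Cauchy--Schwarz actually gives $|\bar{B}_3|^{-1}/\sqrt{2}$); your density reduction is also legitimate for the flat boundary. The paper gives no proof of its own and only cites Lemma A.4 of the referenced work. Your argument of integrating along the characteristic lines of $\bar{B}$ starting from the boundary is the standard self-contained way to establish this inequality, so nothing further is needed.
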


\begin{lem}\label{lem:10220837nm}
Young's inequality (see \cite{ELGP}): Let $1<p, q<\infty$, $\frac{1}{p} + \frac{1}{q} = 1$. Then
\begin{align}\label{young}
    ab \leqslant \frac{a^p}{p} + \frac{b^q}{q}\;\;\;(a,b>0).
\end{align}
A more general form, known as Young's inequality with $\epsilon$, states that 
\begin{align}\label{eyoung}
    ab \leqslant \epsilon a^p + C(\epsilon) b^q\;\;\;(a,b>0,\;\epsilon>0),
\end{align}
where $C(\epsilon)=(\epsilon p)^{-p/q} q^{-1}$.
\end{lem}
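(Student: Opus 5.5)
The plan is to prove \eqref{young} first and then obtain \eqref{eyoung} from it by rescaling.

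\textbf{Step 1: the basic inequality \eqref{young}.} I would use the concavity of the logarithm. With weights $1/p+1/q=1$, for $a,b>0$,
$$\log\Big(\frac{a^p}{p}+\frac{b^q}{q}\Big)\geqslant \frac1p\log a^p+\frac1q\log b^q=\log(ab),$$
and \eqref{young} follows by exponentiating. Alternatively, one can minimise $f(a)=a^p/p+b^q/q-ab$ over $a>0$: the critical point is $a^{p-1}=b$, and there $f$ vanishes because $(p-1)q=p$.

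\textbf{Step 2: rescaling to get \eqref{eyoung}.} Fix $\epsilon>0$ and write
$$ab=\big((\epsilon p)^{1/p}a\big)\big((\epsilon p)^{-1/p}b\big).$$
Applying \eqref{young} to this product gives
$$ab\leqslant \frac{\epsilon p\,a^p}{p}+\frac{(\epsilon p)^{-q/p}b^q}{q}=\epsilon a^p+q^{-1}(\epsilon p)^{-q/p}b^q .$$

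\textbf{Step 3: matching the constant as stated (main obstacle).} The statement writes $C(\epsilon)=(\epsilon p)^{-p/q}q^{-1}$. Step 2 gives the same constant but with exponent $-q/p$, and $q^{-1}(\epsilon p)^{-q/p}$ is the sharp constant. So the stated form holds exactly when $(\epsilon p)^{-p/q}\geqslant(\epsilon p)^{-q/p}$. Writing $x=\epsilon p$, this comparison of $x^{-p/q}$ and $x^{-q/p}$ splits into cases:
\begin{itemize}
\item if $p=q=2$, the two constants coincide;
\item if $x\leqslant1$ and $p\geqslant q$, or $x\geqslant1$ and $p\leqslant q$, the stated constant is larger, and the claim follows from Step 2.
\end{itemize}
In the remaining cases the stated constant is strictly smaller than the sharp one, so the inequality as written fails. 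A counterexample: take $a=(b/(\epsilon p))^{1/(p-1)}$, at which equality holds with the sharp constant.

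I would therefore record the proof with $C(\epsilon)=q^{-1}(\epsilon p)^{-q/p}$ and flag the exponent in the statement as a typo. This changes nothing downstream: in the body of the paper the lemma is only used with some $\epsilon$-dependent constant, and mostly with $p=q=2$, where the two forms agree.
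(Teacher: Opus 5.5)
Your proof is correct and follows the standard argument of the source the paper cites; the paper gives no proof of its own, only the reference to Evans. That argument is concavity of $\log$ (equivalently, convexity of $\exp$) for \eqref{young}, followed by the rescaling $ab=\big((\epsilon p)^{1/p}a\big)\big((\epsilon p)^{-1/p}b\big)$ for \eqref{eyoung}.

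You are also right that the exponent in the printed $C(\epsilon)$ is inverted. The correct and sharp constant is $q^{-1}(\epsilon p)^{-q/p}$, as in Evans. The stated $q^{-1}(\epsilon p)^{-p/q}$ genuinely fails for $p\neq q$ when $\epsilon p$ lies on the wrong side of $1$: for example, $p=3$, $\epsilon=1$, $b=1$, $a=3^{-1/2}$ gives $3^{-1/2}>3^{-3/2}+2/27$. This does not affect the rest of the paper, which only ever uses the existence of some $\epsilon$-dependent constant, together with the case $p=q=2$, where both constants coincide.
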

\begin{rem}
When $p = q = 2$, inequality \eqref{eyoung} reduces to the form
$$ab\leq \epsilon a^2+\frac{1}{4\epsilon}b^2\;\;\;(a,b>0,\;\epsilon>0),$$
which is widely used in energy estimates.
\end{rem}

\begin{lem}\label{10220828}
Interpolation inequality
(see \cite[Theorem 5.2]{ARAJJFF}):
For any $0\leqslant j< i$ and  $\epsilon>0$,
\begin{align}\label{interpolation}
&\|f\|_{j}\lesssim\|f\|_{0}^{1-{j}/{i}}\|f\|_{i}^{{j}/{i}}
\leqslant {C}( i,j,\epsilon)\|f\|_{0} +\epsilon\|f\|_{i}\quad\mathrm{for}\;f\in H^{i},
\end{align}
where the constant ${C}( i,j,\epsilon)$ depends on $i,j$ and $\epsilon$, and Young's inequality
has been used in the last inequality.
\end{lem}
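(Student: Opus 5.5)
The statement to address is the last lemma before the paper breaks off, the interpolation inequality \eqref{interpolation}. I will prove it on the strip $\Omega=\mathbb{R}^2\times(0,1)$ by reducing to the whole space through an extension operator and then using the Fourier transform. Since $\Omega$ has a uniformly Lipschitz (indeed flat) boundary, there is a Stein-type extension operator $E$, or more concretely a higher-order reflection across $y_3=0$ and $y_3=1$ combined with a cutoff in $y_3$. This operator is bounded simultaneously from $H^k(\Omega)$ to $H^k(\mathbb{R}^3)$ for every $0\leqslant k\leqslant i$, and $Ef|_\Omega=f$. For the strip, the reflection can be chosen as $f(y_{\mathrm h},-\sum_m \lambda_m y_3)$-type combinations, with coefficients matching derivatives up to order $i$ and with $\sum \lambda_m$-constraints. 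It acts only in $y_3$, so its bounds are uniform in $y_{\mathrm h}$ and the unboundedness of $\mathbb{R}^2$ causes no trouble.

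On $\mathbb{R}^3$ the inequality follows from Plancherel and Hölder. Write $\|g\|_{H^j}^2\simeq\int(1+|\xi|^2)^j|\hat g|^2\,\mathrm d\xi$ and split the weight as
$$(1+|\xi|^2)^j|\hat g|^2=\big(|\hat g|^2\big)^{1-j/i}\big((1+|\xi|^2)^i|\hat g|^2\big)^{j/i}.$$
Hölder's inequality with exponents $i/(i-j)$ and $i/j$ then gives $\|g\|_{H^j}\lesssim\|g\|_{L^2}^{1-j/i}\|g\|_{H^i}^{j/i}$. Apply this to $g=Ef$ and use the boundedness of $E$ on $L^2$ and on $H^i$ to obtain
$$\|f\|_j\leqslant\|Ef\|_{H^j}\lesssim\|f\|_0^{1-j/i}\|f\|_i^{j/i}.$$

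The second inequality is Young's inequality with $\epsilon$, \eqref{eyoung}, applied with $p=i/j$, $q=i/(i-j)$, $a=\|f\|_i^{j/i}$ and $b=\|f\|_0^{1-j/i}$. After rescaling $\epsilon$ to absorb the implicit constant, this yields $\epsilon\|f\|_i+C(i,j,\epsilon)\|f\|_0$. The case $j=0$ is trivial.

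The only genuine obstacle is constructing the extension operator so that it is bounded at every level $k\leqslant i$ at once. Near each boundary plane I will use the classical Hestenes/Lions reflection
$$Ef(y_{\mathrm h},y_3)=\sum_{m=1}^{i+1}c_m f\Big(y_{\mathrm h},-\frac{y_3}{m}\Big)\quad\text{for } y_3<0,$$
with the $c_m$ solving the Vandermonde system $\sum_m c_m(-1/m)^l=1$ for $0\leqslant l\leqslant i$. The upper plane is handled symmetrically, and a smooth partition of unity in $y_3$ localizes each reflection near its own boundary plane. Alternatively, one can simply cite \cite[Theorem 5.2]{ARAJJFF}, which covers domains satisfying the cone condition, and the strip does.
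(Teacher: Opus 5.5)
Your proof is correct, but it is not what the paper does. The paper gives no argument of its own: it cites \cite[Theorem 5.2]{ARAJJFF} for the multiplicative inequality and remarks that the second inequality is Young's inequality. Your closing alternative, noting that the strip satisfies the cone condition and citing that theorem, is exactly the paper's route.

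The cited theorem works directly on $\Omega$ with a real-variable argument built from one-dimensional interpolation inequalities and the cone condition. It needs no extension operator or Fourier transform, and it covers all $1\leqslant p<\infty$ and arbitrary domains with the cone condition.

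Your argument instead exploits the Hilbert structure and the flat product geometry. An extension operator reduces matters to $\mathbb{R}^3$. There, Plancherel plus H\"older with exponents $i/(i-j)$ and $i/j$ gives the bound in one line. This is self-contained and elementary, but specific to $p=2$ and to domains admitting such an extension.

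You only need $E$ bounded on $L^2$ and on $H^i$, not at every level $k\leqslant i$. The lower bound $\|f\|_j\leqslant\|Ef\|_{H^j}$ is just restriction, up to the equivalence of the Fourier and derivative norms.

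Your Young step, with $a=\|f\|_i^{j/i}$, $b=\|f\|_0^{1-j/i}$, $p=i/j$, $q=i/(i-j)$ and the trivial case $j=0$ set aside, is the same as the paper's. The sentence about ``$f(y_{\mathrm h},-\sum_m\lambda_m y_3)$-type combinations'' is garbled. The explicit Hestenes reflection you write afterwards is correct and suffices: Vandermonde conditions $\sum_m c_m(-1/m)^l=1$ for $0\leqslant l\leqslant i$, combined with a cutoff in $y_3$ near each boundary plane.
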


\begin{lem}\label{10220833}
Product estimates (\cite[Lemma A.9]{JFJSWZhangwei}): Let $0\leqslant i\leqslant 6$, for $f,g\in H^{i}$, we have
\begin{align}
&\label{product}
 \|fg\|_{i}\lesssim    \begin{cases}
 \|f\|_{1}\|g\|_{1} & \mathrm{for}\;\;i=0;  \\[1mm]
 \|f\|_{i}\|g\|_{2} & \mathrm{for}\;\;0\leqslant i\leqslant 2\\[1mm]
                    \|f\|_{2}\|g\|_{j}
 +\|f\|_{j}\|g\|_{2}\qquad & \mathrm{for}\;\;3\leqslant i\leqslant 6.
                    \end{cases}
\end{align}
\end{lem}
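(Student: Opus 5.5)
The plan is to reduce everything to the Leibniz rule, H\"older's inequality, the embeddings of Lemma \ref{lem:10220826}, and the interpolation inequality \eqref{interpolation} on $\Omega=\mathbb{R}^2\times(0,1)$. These tools are available on the strip because $\Omega$ admits a bounded (Stein-type, or simply reflection across $y_3=0,1$) extension operator $H^k(\Omega)\to H^k(\mathbb{R}^3)$ for all $k\leqslant 6$. The $\mathbb{R}^3$ inequalities, namely $H^1\hookrightarrow L^4,L^6$, $H^2\hookrightarrow L^\infty$ and Gagliardo--Nirenberg interpolation, then transfer to $\Omega$ with constants independent of the functions. I also read the last case of \eqref{product} as $\|fg\|_i\lesssim\|f\|_2\|g\|_i+\|f\|_i\|g\|_2$, i.e.\ $j=i$.

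\emph{Case $i=0$.} I would apply H\"older in the form $\|fg\|_0\leqslant\|f\|_{L^4}\|g\|_{L^4}$ and then \eqref{embed237}.

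\emph{Case $1\leqslant i\leqslant 2$.} I would expand $\partial^\alpha(fg)=\sum_{\beta\leqslant\alpha}C_{\alpha\beta}\,\partial^\beta f\,\partial^{\alpha-\beta}g$ with $|\alpha|\leqslant i$ and treat the terms one by one.
\begin{itemize}
\item The term with all derivatives on $f$ is bounded by $\|\partial^\beta f\|_0\|g\|_{L^\infty}\lesssim\|f\|_i\|g\|_2$, using \eqref{embed2}.
\item For $i=1$, the term $f\,\partial g$ is bounded by $\|f\|_{L^6}\|\partial g\|_{L^3}\lesssim\|f\|_1\|g\|_2$.
\item For $i=2$, the mixed term $\partial f\,\partial g$ goes through $L^4\times L^4$, giving $\|f\|_2\|g\|_2$.
\item For $i=2$, the term $f\,\partial^2 g$ uses $\|f\|_{L^4}\|\partial^2 g\|_{L^4}$. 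This would cost $\|g\|_3$, so instead I would use $\|f\|_{L^\infty}\|\partial^2g\|_0$, which needs $\|f\|_2\|g\|_2=\|f\|_i\|g\|_2$.
\end{itemize}
Each bound is of the form $\|f\|_i\|g\|_2$, as required.

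\emph{Case $3\leqslant i\leqslant 6$.} The two end terms are immediate. When $\beta=0$, $\|f\,\partial^\alpha g\|_0\leqslant\|f\|_{L^\infty}\|g\|_i\lesssim\|f\|_2\|g\|_i$, and $\beta=\alpha$ is symmetric. For the genuinely mixed terms with $|\beta|=a\geqslant1$, $|\alpha-\beta|=b\geqslant1$ and $a+b\leqslant i$, I would use
\[
\|\partial^\beta f\,\partial^{\alpha-\beta}g\|_0\leqslant\|\partial^\beta f\|_{L^4}\|\partial^{\alpha-\beta}g\|_{L^4}\lesssim\|f\|_{a+1}\|g\|_{b+1}.
\]
Each index lies in $[2,i]$, since $a,b\geqslant1$ and $a+1\leqslant i$ when $b\geqslant1$. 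Interpolating via \eqref{interpolation} between $H^2$ and $H^i$ gives
\[
\|f\|_{a+1}\lesssim\|f\|_2^{1-\theta}\|f\|_i^{\theta},\qquad \theta=\frac{a-1}{i-2},
\]
and similarly $\|g\|_{b+1}\lesssim\|g\|_2^{1-\theta'}\|g\|_i^{\theta'}$ with $\theta'=\frac{b-1}{i-2}$. When $a+b=i$ we have $\theta+\theta'=1$, so the product equals $(\|f\|_i\|g\|_2)^{\theta}(\|f\|_2\|g\|_i)^{1-\theta}$. Young's inequality \eqref{young} then bounds this by $\|f\|_i\|g\|_2+\|f\|_2\|g\|_i$. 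When $a+b<i$ I would use the monotonicity $\|\cdot\|_{k}\leqslant\|\cdot\|_{k'}$ for $k\leqslant k'$ to reduce to the same situation. Summing over $|\alpha|\leqslant i$ finishes the proof.

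The main obstacle is justifying the two-endpoint interpolation $\|f\|_{k}\lesssim\|f\|_2^{1-\theta}\|f\|_i^\theta$ on the unbounded strip, since \eqref{interpolation} is stated with endpoint $\|\cdot\|_0$. I would obtain it by applying the Gagliardo--Nirenberg inequality to $\nabla^2 f$ after extending $f$ to $\mathbb{R}^3$: on $\mathbb{R}^3$, Fourier multipliers give $\|E f\|_{\dot H^k}\leqslant\|Ef\|_{\dot H^2}^{1-\theta}\|Ef\|_{\dot H^i}^{\theta}$ by H\"older in frequency. I would then restrict back to $\Omega$ using the boundedness of the extension operator $E$.
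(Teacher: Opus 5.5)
Your proof is correct; the paper gives no argument of its own for this lemma (it only cites \cite[Lemma A.9]{JFJSWZhangwei}), and your route via the Leibniz rule, H\"older, the embeddings \eqref{embed237}--\eqref{embed2} and $H^2$--$H^i$ interpolation is the standard proof of such estimates, with your reading $j=i$ in the last case being the right one. One small correction: a plain even reflection across $y_3=0,1$ is not bounded on $H^k$ for $k\geqslant 2$, but you do not need any extension, since applying \eqref{interpolation} on $\Omega$ to $\nabla^2 f$ and using $\|f\|_1\leqslant\|f\|_2$ already gives $\|f\|_{k}\lesssim\|f\|_2^{1-\theta}\|f\|_i^{\theta}$.
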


\begin{lem}\label{lem:11190836}
Dual estimate (see \cite[Lemma A.8]{JFJHJS}): For $\varphi$, $\psi\in H^{1/2}(\mathbb{R}^2)$ and $\partial_{\mathrm{h}}\varphi\in L^1(\mathbb{R}^2)$,
\begin{align}\label{11190840}
\left|\int_{\mathbb{R}^2}\partial_{\mathrm{h}}\varphi\psi\mathrm{d}x_{\mathrm{h}}\right|
\lesssim|\varphi|_{1/2}|\psi|_{1/2}.
\end{align}
\end{lem}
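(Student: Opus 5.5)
This is the dual estimate: for $\varphi,\psi\in H^{1/2}(\mathbb{R}^2)$ with $\partial_{\mathrm{h}}\varphi\in L^1(\mathbb{R}^2)$,
\[
\Big|\int_{\mathbb{R}^2}\partial_{\mathrm{h}}\varphi\,\psi\,\mathrm{d}x_{\mathrm{h}}\Big|\lesssim|\varphi|_{1/2}|\psi|_{1/2}.
\]
I would prove it with Plancherel's theorem on $\mathbb{R}^2$, splitting the derivative symbol evenly between the two factors. The key steps, in order, are as follows.

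First, I reduce to Schwartz functions. I would prove the bound for $\varphi,\psi\in\mathcal{S}(\mathbb{R}^2)$ and then pass to general $\varphi,\psi\in H^{1/2}$ by density, using the continuity of both sides. The hypothesis $\partial_{\mathrm{h}}\varphi\in L^1$ is only there to make the pairing $\int\partial_{\mathrm{h}}\varphi\,\psi$ meaningful as a Lebesgue integral. For the limit I would mollify: set $\varphi_\epsilon=\rho_\epsilon*\varphi$, $\psi_\epsilon=\rho_\epsilon*\psi$, and cut off in $x_{\mathrm{h}}$. Since $\partial_{\mathrm{h}}\varphi_\epsilon\to\partial_{\mathrm{h}}\varphi$ in $L^1$ and $\psi_\epsilon$ is bounded and converges a.e., the integrals converge (dominated convergence, after passing to a subsequence if needed). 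Meanwhile the $H^{1/2}$ seminorms converge.

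Second, I apply Parseval to smooth $\varphi,\psi$:
\[
\int\partial_{\mathrm{h}}\varphi\,\psi\,\mathrm{d}x_{\mathrm{h}}=\int_{\mathbb{R}^2} i\xi_k\,\hat\varphi(\xi)\,\overline{\hat{\bar\psi}(\xi)}\,\mathrm{d}\xi .
\]
Using $|\xi_k|\le|\xi|=|\xi|^{1/2}|\xi|^{1/2}$, the modulus is bounded by
\[
\int|\xi|^{1/2}|\hat\varphi|\,|\xi|^{1/2}|\hat\psi|\,\mathrm{d}\xi .
\]
Cauchy--Schwarz then gives $\|\,|\xi|^{1/2}\hat\varphi\|_{L^2}\,\|\,|\xi|^{1/2}\hat\psi\|_{L^2}$.

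Third, I identify these quantities with the norm $|\cdot|_{1/2}$. Each factor is the homogeneous $\dot H^{1/2}$ seminorm, which is dominated by the full $H^{1/2}$ norm, since $|\xi|\le(1+|\xi|^2)^{1/2}$. That yields \eqref{11190840}.

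The main obstacle, if any, is the density step. An $H^{1/2}$ function need not be bounded, so the product $\partial_{\mathrm{h}}\varphi\,\psi$ is only integrable when the integral is interpreted suitably. The cleanest route is to define the left side as the $\dot H^{-1/2}$–$\dot H^{1/2}$ duality pairing, which agrees with the Lebesgue integral whenever the latter converges absolutely.

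Alternatively, I could integrate by parts as $-\int\varphi\,\partial_{\mathrm{h}}\psi$ and use the same Fourier splitting; this gives the identical bound.
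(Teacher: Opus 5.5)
Your Fourier computation (Parseval, $|\xi_k|\le|\xi|^{1/2}|\xi|^{1/2}$, Cauchy--Schwarz, $|\xi|\le(1+|\xi|^2)^{1/2}$) is correct and is the standard proof of this estimate. The paper gives no argument and only points to Lemma A.8 of the cited reference. The Fourier side $\int i\xi_k\hat\varphi\,\overline{\hat{\bar\psi}}\,\mathrm{d}\xi$ converges absolutely for all $\varphi,\psi\in H^{1/2}(\mathbb{R}^2)$, so bounding it needs no density argument at all. The only real issue is identifying it with the Lebesgue integral on the left, and that is exactly where your argument breaks.

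For $\psi_\epsilon=\rho_\epsilon*\psi$ you only have $\|\psi_\epsilon\|_{L^\infty}\le\|\rho_\epsilon\|_{L^{4/3}}\|\psi\|_{L^4}\lesssim\epsilon^{-1/2}|\psi|_{1/2}$. The $\psi_\epsilon$ cannot be uniformly bounded when $\psi\notin L^\infty$, so dominated convergence has no dominating function. In fact no approximation can rescue the literal statement, because the hypotheses do not make $\partial_{\mathrm{h}}\varphi\,\psi$ integrable. Take $\psi=\log\log(1/|x_{\mathrm{h}}|)$ cut off near the origin (it lies in $H^1$). Take $\varphi=\sum_k\beta_k\theta(2^kx_{\mathrm{h}})\sin(4^kx_1)$, with $\theta$ a bump supported in $\{1<|x_{\mathrm{h}}|<2\}$ and $\beta_k=1/(k\log^2k)$. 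Then $|\varphi|_{1/2}\lesssim\sum_k\beta_k<\infty$ and $\|\partial_{\mathrm{h}}\varphi\|_{L^1}\lesssim\sum_k\beta_k<\infty$, but $\int|\partial_1\varphi|\,|\psi|\,\mathrm{d}x_{\mathrm{h}}\gtrsim\sum_k\beta_k\log k=\infty$. So the lemma must be read either as a bound on the $H^{-1/2}$--$H^{1/2}$ pairing, or under the extra assumption $\partial_{\mathrm{h}}\varphi\,\psi\in L^1$. The latter is harmless in the paper, where the lemma is applied to traces of a solution assumed smooth.

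Under that extra assumption, you assert but do not prove that the pairing agrees with the Lebesgue integral. Here is one way to prove it, for real $\psi$ (treat real and imaginary parts separately otherwise). Truncate: $T_M\psi:=\max\{-M,\min\{M,\psi\}\}$. Since $T_M$ and $\mathrm{Id}-T_M$ are $1$-Lipschitz, the Gagliardo form $\iint|f(x)-f(y)|^2|x-y|^{-3}\,\mathrm{d}x\,\mathrm{d}y$ of the $H^{1/2}$ seminorm and dominated convergence give $T_M\psi\to\psi$ in $H^{1/2}$. For fixed $M$, the mollifications $\rho_\epsilon*T_M\psi$ are bounded by $M$ uniformly in $\epsilon$. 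So your mollify-and-cut-off argument, applied to $\psi$ only ($\partial_{\mathrm{h}}\varphi\in L^1$ needs no smoothing), now legitimately gives $\int\partial_{\mathrm{h}}\varphi\,T_M\psi\,\mathrm{d}x_{\mathrm{h}}=\langle\partial_{\mathrm{h}}\varphi,T_M\psi\rangle$. Finally let $M\to\infty$. The right side converges by the $H^{1/2}$-continuity of the pairing (your Fourier bound). The left side converges by dominated convergence, since $|\partial_{\mathrm{h}}\varphi\,T_M\psi|\le|\partial_{\mathrm{h}}\varphi\,\psi|\in L^1$.
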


\begin{lem}\label{10220835bnbm}
Trace estimate (see \cite[Lemma A.6]{JFJHJS}): For $f \in H^{1+i}$ with $i\geqslant0$, it holds that
\begin{equation}\label{37190928}
\| f|_{y_3=a} \|_{H^{i+1/2}(\mathbb{R}^2)} 
\lesssim \| f \|_{\underline{1+i},0}^{1/2} \| f \|_{\underline{i},1}^{1/2}\quad\mathrm{for\;any}\;a\in[0,1].
\end{equation}
\end{lem}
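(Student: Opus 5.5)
This is the trace estimate \eqref{37190928}: for $f\in H^{1+i}(\Omega)$, $\Omega=\mathbb{R}^2\times(0,1)$, and $a\in[0,1]$,
$$\|f|_{y_3=a}\|_{H^{i+1/2}(\mathbb{R}^2)}\lesssim\|f\|_{\underline{1+i},0}^{1/2}\|f\|_{\underline{i},1}^{1/2}.$$
I would prove it with the horizontal Fourier transform in $y_{\mathrm h}$ and a one-dimensional trace inequality in $y_3$, applied frequency by frequency. Write $\hat f(\xi,y_3)$ for the partial Fourier transform in $y_{\mathrm h}$. By Plancherel,
$$\|f|_{y_3=a}\|_{H^{i+1/2}}^2\simeq\int_{\mathbb{R}^2}(1+|\xi|^2)^{i+1/2}|\hat f(\xi,a)|^2\,\mathrm{d}\xi.$$
The right-hand side seminorms are also expressible in Fourier variables:
$$\|f\|_{\underline{1+i},0}^2\simeq\int(1+|\xi|^2)^{i+1}\|\hat f(\xi,\cdot)\|_{L^2(0,1)}^2\,\mathrm{d}\xi,$$
$$\|f\|_{\underline{i},1}^2\simeq\int(1+|\xi|^2)^{i}\big(\|\hat f(\xi,\cdot)\|_{L^2(0,1)}^2+\|\partial_3\hat f(\xi,\cdot)\|_{L^2(0,1)}^2\big)\,\mathrm{d}\xi.$$

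The key one-dimensional step is the inequality, for $g\in H^1(0,1)$ and $a\in[0,1]$,
$$|g(a)|^2\lesssim\|g\|_{L^2(0,1)}\|g\|_{H^1(0,1)}.$$
To prove it, choose the point $b$ where $|g|^2$ is minimal; then $|g(b)|^2\le\|g\|_{L^2}^2$. The fundamental theorem of calculus gives
$$|g(a)|^2\le|g(b)|^2+2\int_0^1|g||g'|\le\|g\|_{L^2}^2+2\|g\|_{L^2}\|g'\|_{L^2}\lesssim\|g\|_{L^2}\|g\|_{H^1}.$$
Apply this with $g=\hat f(\xi,\cdot)$ and split the weight as $(1+|\xi|^2)^{i+1/2}=(1+|\xi|^2)^{(i+1)/2}(1+|\xi|^2)^{i/2}$. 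Cauchy--Schwarz in $\xi$ then yields
$$\int(1+|\xi|^2)^{i+1/2}|\hat f(\xi,a)|^2\,\mathrm{d}\xi\lesssim\Big(\int(1+|\xi|^2)^{i+1}\|\hat f\|_{L^2_{y_3}}^2\Big)^{1/2}\Big(\int(1+|\xi|^2)^{i}\|\hat f\|_{H^1_{y_3}}^2\Big)^{1/2}.$$
By the identifications above, this is exactly $\|f\|_{\underline{1+i},0}\|f\|_{\underline{i},1}$. Taking square roots gives the claim.

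The main point needing care is the uniformity of the one-dimensional trace constant in $a\in[0,1]$. Choosing the minimizing point $b$, rather than a fixed endpoint, handles this, and the interval length $1$ is fixed. The rest is the routine equivalence of the $\underline{k},l$ seminorms with Fourier weights. One would first argue for Schwartz-class $f$ (dense in $H^{1+i}$), so that $\hat f(\xi,\cdot)$ is continuous in $y_3$ and the trace is meaningful, and then pass to the limit.
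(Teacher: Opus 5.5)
Your proof is correct. It is the standard argument for this anisotropic trace bound: the horizontal Fourier transform, the one-dimensional inequality $|g(a)|^2\lesssim\|g\|_{L^2(0,1)}\|g\|_{H^1(0,1)}$, and Cauchy--Schwarz in $\xi$ after splitting the weight $(1+|\xi|^2)^{i+1/2}$. The choice of the minimizing point $b$ makes the one-dimensional inequality uniform in $a\in[0,1]$. The paper gives no proof of its own and only quotes Lemma A.6 of the cited reference, so your argument supplies a self-contained justification of the estimate that is used in the boundary-term bound of Lemma \ref{uwnormal}.

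One inaccuracy, which is harmless. The Fourier expression you write for $\|f\|_{\underline{i},1}^2$ is not an equivalence but only a lower bound. Each $\|\partial_{\mathrm{h}}^{\alpha}f\|_{1}$ with $|\alpha|\leqslant i$ also contains $\|\nabla_{\mathrm{h}}\partial_{\mathrm{h}}^{\alpha}f\|_{0}$, which carries the weight $(1+|\xi|^2)^{i+1}$ on $\|\hat f(\xi,\cdot)\|_{L^2(0,1)}^2$. You only need the upper bound by $\|f\|_{\underline{i},1}$, so the argument is unaffected. In fact it proves a slightly sharper estimate, with those horizontal-gradient terms omitted from the second factor.
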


\begin{lem}\label{10220835}
Stokes estimate:
(see \cite[Lemma A.12]{WYJAIM2020}):
Let $k \geqslant 0$. Suppose that $f^2 \in H^{k}, f^3 \in H^{k+1}$, then $\varphi \in H^{k+2}, \nabla p \in H^{k}$ solving the problem
\begin{equation*}
\begin{cases}
\nabla \psi-\mu\Delta \varphi= f^{2}\quad &\mathrm{in}\;\;  \Omega,\\
\mathrm{div}\varphi=f^{3} &\mathrm{in}\;\;  \Omega,\\
\varphi=0 &\mathrm{on}\;\; \partial \Omega.
\end{cases}
\end{equation*}
Moreover, the solution enjoys that
\begin{align}\label{11190928}
\|\varphi\|_{k+2}+\|\nabla \psi\|_{k}\lesssim
\|\varphi\|_0^2 +\|f^{2}\|_{k}+\|f^{3}\|_{k+1}.
\end{align}
\end{lem}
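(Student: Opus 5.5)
The statement is the Stokes estimate of Lemma \ref{10220835} in the strip $\Omega=\mathbb{R}^2\times(0,1)$ with homogeneous Dirichlet data. (The displayed right-hand side should read $\|\varphi\|_0$ rather than $\|\varphi\|_0^2$, by homogeneity.) The plan is to separate the horizontal regularity, obtained by difference quotients, from the normal regularity, read off from the equations. I will induct on $k$.

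\textbf{Base case ($k=0$).} First I reduce to a divergence-free problem. Since $f^3\in H^1$, a Bogovskii/lifting construction in the strip gives $v\in H^2\cap H^1_0$ with
$$\mathrm{div}\,v=f^3,\qquad \|v\|_{k+2}\lesssim\|f^3\|_{k+1}.$$
The compatibility condition $\int_{\mathbb{R}^2} f^3\,\mathrm{d}y_{\mathrm{h}}\,\mathrm{d}y_3$ is automatic here, because $f^3=\mathrm{div}\,\varphi$ and $\varphi$ vanishes on both boundary planes. Subtracting $v$, I test the equation with the solution and use the Friedrichs inequality \eqref{friedrich}. This gives $\|\nabla\varphi\|_0\lesssim\|\varphi\|_0+\|f^2\|_{-1}+\|f^3\|_1$, with the pressure eliminated by solenoidality.

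Next, the horizontal derivatives $\partial_{\mathrm{h}}$ preserve the boundary condition. Applying horizontal difference quotients and repeating the energy estimate, with $(D^h_{\mathrm{h}}\varphi, D^h_{\mathrm{h}}\psi)$ solving the same problem with data $(D^h f^2,D^h f^3)$, yields $\|\partial_{\mathrm{h}}\nabla\varphi\|_0\lesssim \|f^2\|_0+\|f^3\|_1+\|\varphi\|_0$. Here the pairing $\int D^hf^2\cdot D^h\varphi$ is moved onto a single derivative.

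\textbf{Normal derivatives.} These are recovered algebraically from the equations. From $\partial_3\varphi_3=f^3-\mathrm{div}_{\mathrm{h}}\varphi_{\mathrm{h}}$ I get $\partial_3^2\varphi_3$ in $L^2$. From the third momentum equation I get $\partial_3\psi=f^2_3+\mu(\Delta_{\mathrm{h}}\varphi_3+\partial_3^2\varphi_3)$. The first step is to control $\nabla_{\mathrm{h}}\psi$. I do this by duality: by the Ne\v{c}as inequality on the strip, $\|\nabla\psi\|_{-1}$ is controlled through the equation, and $\partial_{\mathrm{h}}\psi$ is then estimated in $L^2$ via the equation differentiated horizontally. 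Alternatively, $\nabla_{\mathrm{h}}\psi=f^2_{\mathrm{h}}+\mu\Delta\varphi_{\mathrm{h}}$ gives it tested against $H^1_0$ functions, and the $\partial_3\psi$ bound plus $\partial_{\mathrm{h}}\psi\in H^{-1}$ yields $\psi$ locally in $L^2$ modulo constants. Once $\nabla_{\mathrm{h}}\psi$ is known, the horizontal momentum equations give $\mu\partial_3^2\varphi_{\mathrm{h}}=\nabla_{\mathrm{h}}\psi-\mu\Delta_{\mathrm{h}}\varphi_{\mathrm{h}}-f^2_{\mathrm{h}}$.

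\textbf{Induction on $k$.} I apply $\partial_{\mathrm{h}}^\alpha$ with $|\alpha|\le k$ to get all purely horizontal-heavy norms from the base case. Then, for mixed derivatives $\partial_{\mathrm{h}}^{\beta}\partial_3^m$, I induct on $m$: each extra $\partial_3$ on $(\varphi,\psi)$ is traded, via the three relations above, for horizontal derivatives, one fewer normal derivative, and derivatives of $f^2,f^3$. This is the same tangential-to-normal reduction the paper uses in \eqref{11292247}.

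\textbf{Main obstacle.} The delicate step is the base pressure estimate on the unbounded strip: obtaining $\|\nabla\psi\|_0$, not just a local bound, without a Poincar\'e inequality for $\psi$. I would handle it with a Fourier transform in $y_{\mathrm{h}}$. This reduces the problem to a family of ODE boundary value problems in $y_3\in(0,1)$ for each frequency $\xi$, with explicit resolvent bounds uniform in $|\xi|$. Equivalently, the base case can simply be cited from the standard references (e.g.\ \cite{WYJAIM2020}).
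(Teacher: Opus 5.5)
The paper gives no argument for this lemma; it only cites Lemma A.12 of \cite{WYJAIM2020}, so your closing fallback of citing the base case is what the paper does. You are right that $\|\varphi\|_0^2$ should read $\|\varphi\|_0$. As an independent proof, however, your sketch has a genuine gap in the base case, and it sits exactly at the term $\|\varphi\|_0$, which your argument never really uses.

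\textbf{The lifting bound is false on the strip.} The bound $\|v\|_{k+2}\lesssim\|f^3\|_{k+1}$ fails on $\mathbb{R}^2\times(0,1)$. The condition $\int f^3=0$ is the bounded-domain condition, and here it is not even defined for $f^3\in H^1$. In the strip, any $v\in H^1_0$ with $\mathrm{div}\,v=f^3$ satisfies $\mathrm{div}_{\mathrm{h}}\int_0^1 v_{\mathrm{h}}\,\mathrm{d}y_3=\int_0^1 f^3\,\mathrm{d}y_3$. This forces a lower bound on $\|v\|_0$ that $\|f^3\|_{k+1}$ cannot control at low horizontal frequencies.

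Concretely, take $g=6y_3(1-y_3)$ and a Schwartz function $\Phi$ on $\mathbb{R}^2$, and set $\varphi=(g\nabla_{\mathrm{h}}\Phi(\epsilon y_{\mathrm{h}}),0)$, $\psi=-12\mu\epsilon^{-1}\Phi(\epsilon y_{\mathrm{h}})$. Then $f^2=(-\mu\epsilon^2 g\nabla_{\mathrm{h}}\Delta_{\mathrm{h}}\Phi(\epsilon y_{\mathrm{h}}),0)$ and $f^3=\epsilon g\Delta_{\mathrm{h}}\Phi(\epsilon y_{\mathrm{h}})$. So $\|f^2\|_k=O(\epsilon)$ and $\|f^3\|_{k+1}=O(1)$, while $\|\varphi\|_0$ and $\|\nabla\psi\|_0$ are of order $\epsilon^{-1}$. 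By the Helmholtz decomposition in $\mathbb{R}^2$, every $H^1_0$-lifting of this $f^3$ has $L^2$ norm $\gtrsim\epsilon^{-1}$. The example also shows the estimate itself fails without the $\|\varphi\|_0$ term.

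\textbf{Both pressure routes fail for the same reason.} The Ne\v{c}as inequality does not hold on the strip. For $q=Q(y_{\mathrm{h}})$ one only has $\|\nabla q\|_{H^{-1}}\leqslant\|\nabla_{\mathrm{h}}Q\|_{L^2(\mathbb{R}^2)}$. The function $q=\partial_{\mathrm{h}}\psi$ above is of this type, with $\|q\|_0\sim\epsilon^{-1}$ but $\|\nabla q\|_{H^{-1}}=O(1)$. Likewise, the frequency-wise ODE bounds are not uniform in $|\xi|$. Through the flux constraint $\mathrm{i}\xi\cdot\int_0^1\hat\varphi_{\mathrm{h}}\,\mathrm{d}y_3=\int_0^1\hat f^3\,\mathrm{d}y_3$ they degenerate like $|\xi|^{-1}$ as $\xi\to0$.

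\textbf{What is missing.} You need a separate treatment of low horizontal frequencies in which $\|\varphi\|_0$ genuinely enters; that is why the term is in the statement. For $|\xi|\geqslant1$, your lifting and energy estimates go through with uniform constants. For $|\xi|\leqslant1$, do two things.
\begin{itemize}
\item Lift the vertical mean using $V=\int_0^1\varphi_{\mathrm{h}}\,\mathrm{d}y_3$. It satisfies $\mathrm{div}_{\mathrm{h}}V=\int_0^1 f^3\,\mathrm{d}y_3$ and $\|V\|_{L^2(\mathbb{R}^2)}\leqslant\|\varphi\|_0$, so $g(y_3)V$ (restricted to low frequencies) carries the mean, and the remainder has zero vertical mean and lifts trivially.
\item Recover the vertically averaged pressure by testing the horizontal momentum equation with a weight $\theta(y_3)$ satisfying $\theta(0)=\theta(1)=0$ and $\int_0^1\theta\,\mathrm{d}y_3=1$. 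Since $\varphi_{\mathrm{h}}=0$ on $\partial\Omega$, two integrations by parts in $y_3$ give
\[
\nabla_{\mathrm{h}}\int_0^1\theta\psi\,\mathrm{d}y_3=\int_0^1\theta f^2_{\mathrm{h}}\,\mathrm{d}y_3+\mu\Delta_{\mathrm{h}}\int_0^1\theta\varphi_{\mathrm{h}}\,\mathrm{d}y_3+\mu\int_0^1\theta''\varphi_{\mathrm{h}}\,\mathrm{d}y_3 .
\]
Hence the low-frequency part of $\nabla_{\mathrm{h}}\psi$ is bounded by $\|f^2\|_0+\|\partial_3\psi\|_0+\|\varphi\|_0$.
\end{itemize}
Only after this can $\partial_3^2\varphi_{\mathrm{h}}$ be read off from the horizontal equations as you intended. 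Your tangential-to-normal induction is then standard.
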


\renewcommand\refname{References}
\renewenvironment{thebibliography}[1]{%
\section*{\refname}
\list{{\arabic{enumi}}}{\def\makelabel##1{\hss{##1}}\topsep=0mm
\parsep=0mm
\partopsep=0mm\itemsep=0mm
\labelsep=1ex\itemindent=0mm
\settowidth\labelwidth{\small[#1]}%
\leftmargin\labelwidth \advance\leftmargin\labelsep
\advance\leftmargin -\itemindent
\usecounter{enumi}}\small
\def\newblock{\ }
\sloppy\clubpenalty4000\widowpenalty4000
\sfcode`\.=1000\relax}{\endlist}
\bibliographystyle{model1b-num-names}

\end{document}